\newcommand{\cc}{\mathbb{C}}
\renewcommand{\O}{\mathcal{O}}
\newcommand{\pp}{\mathbb{P}}
\newcommand{\hh}{\mathbb{H}}
\newcommand{\qq}{\mathbb{Q}}
\newcommand{\zz}{\mathbb{Z}}
\newcommand{\ff}{\mathbb{F}}
\newcommand{\kk}{\mathbb{K}}
\newcommand{\fE}{\mathcal{E}}
\newcommand{\fP}{\mathcal{P}}
\newcommand{\C}{\mathscr{C}}
\newcommand{\GL}{\text{GL}}
\newcommand{\SL}{\text{SL}}	 	
\newcommand{\PGL}{\text{PGL}}
\newcommand{\Stab}{\operatorname{Stab}}
\newcommand{\disc}{\operatorname{disc}}
\newcommand{\exc}{\operatorname{exc}}
\newcommand{\genus}{\operatorname{genus}}
\newcommand{\Frac}{\operatorname{Frac}}
\newcommand{\Spec}{\operatorname{Spec}}
\newcommand{\p}{\mathfrak{p}}
\renewcommand{\Im}{\operatorname{Im}}
\newcommand{\Ast}{\Asterisk}
\newcommand{\tr}{\text{tr}}
\newcommand{\Gal}{\text{Gal}}
\newcommand{\Frob}{\text{Frob}}
\newcommand{\Mat}{\operatorname{Mat}}
\newcommand{\Aut}{\operatorname{Aut}}
\newcommand{\tors}{\text{tors}}
\newcommand{\bs}{\operatorname{LGP}}
\newcommand{\gabold}{ \begin{pmatrix} a & \frac{bd_F(1-d_F)}{4} \\ b & a+bd_F \end{pmatrix}}
\newcommand{\habold}{ \begin{pmatrix} a & ad_F- \frac{bd_F(1-d_F)}{4} \\ b & -a \end{pmatrix}}
\newcommand{\sm}[4]{{\left(\begin{smallmatrix}#1&#2\\ #3&#4\end{smallmatrix}\right)}}
\newcommand{\sv}[2]{{\left(\begin{smallmatrix}#1 \\#2\end{smallmatrix}\right)}}
\newcommand{\vect}[2]{\begin{pmatrix} #1 \\ #2 \end{pmatrix}}
\def\legendre@dash#1#2{\hb@xt@#1{%
  \kern-#2\p@
  \cleaders\hbox{\kern.5\p@
    \vrule\@height.2\p@\@depth.2\p@\@width\p@
    \kern.5\p@}\hfil
  \kern-#2\p@
  }}
\def\@legendre#1#2#3#4#5{\mathopen{}\left(
  \sbox\z@{$\genfrac{}{}{0pt}{#1}{#3#4}{#3#5}$}%
  \dimen@=\wd\z@
  \kern-\p@\vcenter{\box0}\kern-\dimen@\vcenter{\legendre@dash\dimen@{#2}}\kern-\p@
  \right)\mathclose{}}
\newcommand\legendre[2]{\mathchoice
  {\@legendre{0}{1}{}{#1}{#2}}
  {\@legendre{1}{.5}{\vphantom{1}}{#1}{#2}}
  {\@legendre{2}{0}{\vphantom{1}}{#1}{#2}}
  {\@legendre{3}{0}{\vphantom{1}}{#1}{#2}}
}
\def\dlegendre{\@legendre{0}{1}{}}
\def\tlegendre{\@legendre{1}{0.5}{\vphantom{1}}}
\newtheorem{thm}{Theorem}[section]
\newtheorem{ithm}{Theorem}
\newtheorem{iprop}[ithm]{Proposition}
\newtheorem{lem}[thm]{Lemma}
\newtheorem{prop}[thm]{Proposition}
\newtheorem{cor}[thm]{Corollary}
\theoremstyle{definition}
\newtheorem{defin}[thm]{Definition}
\newtheorem{question}[thm]{Question}
\theoremstyle{remark}
\newtheorem*{rem}{Remark}
\newcommand{\defi}[1]{\textsf{#1}} 
\title[A Local-global principle for isogenies]{A local-global principle for isogenies of composite degree} 
\author{Isabel Vogt}
\thanks{This research was supported in part by the National Science Foundation Graduate Research Fellowship Program under Grant DGE-1122374 as well as Grant DMS-1601946.}
\begin{document}
\maketitle

\begin{abstract}
Let $E$ be an elliptic curve defined over a number field $K$.  
If for almost all primes of $K$, the reduction of $E$ has a rational cyclic isogeny of fixed
degree, then we can ask whether $E$ has a cyclic isogeny over $K$ of that degree.
Building upon the works of Sutherland, Anni, and Banwait-Cremona in the case of prime degree, we consider this question for cyclic isogenies of arbitrary degree.
\end{abstract}

\section{Introduction}

Fix a number field $K$.  Let $E$ be an elliptic curve defined over $K$.  For all primes $\p$ of $K$ where $E$ has good reduction, let $E_\p$ denote the reduction of $E$ modulo $\p$.  By assumption this is an elliptic curve over the residue field, which we denote by $k_{\p}$.  If $E$ has some level structure over $K$, such as a rational torsion point or a rational isogeny, then for almost all $\p$, the reduction $E_\p$ does as well.  We will say that $E$ has level-structure \defi{locally at $\p$} if $E_\p$ has such structure.

One can then ask about a converse: if $E$ has some structure locally at $\p$ for almost all $\p$, does $E$ necessarily have such structure over $K$?  Katz originally asked this question for the property that $m \mid \#E(K)_\tors$, and therefore, about rational $\ell$-torsion points, where $\ell$ is prime.  He showed that it is not true in general; however, $E$ is always isogenous (over $K$) to a curve $E'$ such that $m \mid \#E'(K)_\tors$ \cite[Theorem 2]{katz}.

Sutherland asked the analogous question for the property of having an isogeny of degree $\ell$, for a fixed prime $\ell$.  In \cite{sutherland}, he showed that this question again has a negative answer in general.  This local-global question cannot be salvaged by considering isogenous curves since the property of having an isogeny of prime degree $\ell$ is itself an isogeny invariant.  However, Sutherland gives a classification of the exceptions that implies, in particular, that if $\ell \equiv 1 \pmod{4}$ and $\sqrt{\ell} \not\in K$, then $E$ \emph{does} have an isogeny of degree $\ell$ over $K$.  Anni in \cite{anni} then proved that for any fixed number field $K$, there are only finitely many primes $\ell$ such that there exists an elliptic curve $E/K$ with an isogeny of degree $\ell$ locally almost everywhere, but not an isogeny of degree $\ell$ over $K$.  And if $\ell \neq 5,7$, there are finitely many $j$-invariants of curves over $K$ which give exceptions.

The key to the proof of these theorems is to translate the problem into a purely group-theoretic statement about the image of the mod $\ell$ Galois representation attached to the
$\ell$-torsion of a curve $E/K$ for which the local-global question is considered,
\[ \rho_{E,\ell} \colon G_K \rightarrow \Aut(E[\ell]) \simeq \GL_2(\zz/\ell\zz), \]
where $G_K$ denotes the absolute Galois group $\Gal(\bar{K}/K)$.
There exists a $K$-rational isogeny of degree $\ell$ if and only if $\Im(\rho_{E,\ell})$ preserves a $1$-dimensional subspace.
The Chebotarev density theorem shows that $E_\p$ admits a $k_\p$-rational isogeny of degree $\ell$ for almost all $\p$ if and only if \emph{every element }of $\Im(\rho_{E,\ell})$ preserves a $1$-dimensional subspace.

In this paper we extend these results to cyclic isogenies of composite degree $N$, which we will refer to as \defi{$N$-isogenies} for the remainder of the paper.  While the property of having an $\ell$-isogeny is an isogeny invariant, this is not true for isogenies of composite degree.  For this reason, following Katz, we will focus on ($K$-rational) isogeny classes of elliptic curves.  

For any field $k$, and $E/k$ an elliptic curve, we will denote by $\C(E/k)$ (or simply $\C(E)$ when the ground field is implicit) the $k$-rational isogeny class of $E$.  We say that $\C(E/k)$ \defi{has an $N$-isogeny} if there exist $E_1, E_2 \in \C(E/k)$ and a cyclic isogeny $E_1 \to E_2$ of degree $N$ defined over $k$.  For a number field $K$, we say that $\C(E/K)$ has an $N$-isogeny locally almost everywhere if $\C(E_\p/k_\p)$ has a ($k_{\p}$-rational) $N$-isogeny for almost all $\p$.

\begin{question}
If $\C(E/K)$ has an $N$-isogeny locally almost everywhere, must $\C(E/K)$ have an $N$-isogeny as well?
\end{question}

When this is true, we say that $\C(E/K)$ satisfies the local-global principle for $N$-isogenies (satisfies $\bs_N$ for short).  
If not, we say that $\C(E/K)$ is an \defi{exceptional isogeny class}.
One checks that for $j \neq 0,1728$, this depends only upon the $j$-invariant of $E$.
Define the set of isomorphism classes of exceptions
\[\Sigma(K, N) \colonequals \{ j \in K : \text{$j = j(E/K)$ and $\C(E)$ fails $\bs_N$} \}. \]
 If $j \in \Sigma(K, N)$, we say that $(N, j)$ is a \defi{exceptional pair} over $K$.  By the work of Sutherland, Anni, and Banwait-Cremona, we know that there are exceptions for some $N$ and $K$.  Our first  theorem is the following finiteness statement for counterexamples, see Section \ref{jinvs}:

\begin{ithm}\label{mainthm}
For any number field $K$,
\begin{enumerate}[1.]
\item\label{mainthm_1} If $N \notin \{ 5, 7, 8, 10, 24, 25, 32, 40, 49, 50, 72\}$, then the set $\Sigma(K, N)$ of $j$-invariants of exceptions to $\bs_N$
is finite.
\item\label{mainthm_2} The union $ \Lambda(K) \colonequals \displaystyle\bigcup_{\gcd(N, 70) = 1} \Sigma(K,N)$ is finite.
\end{enumerate}
\end{ithm}

If $N = \prod_i \ell_i^{n_i}$, then $\C(E)$ has an $N$-isogeny locally almost everywhere if and only if for all $i$, $\C(E)$ has an $\ell_i^{n_i}$-isogeny locally almost everywhere.  And $\C(E)$ has a global $N$-isogeny if and only if $\C(E)$ has a global $\ell_i^{n_i}$-isogeny for all $i$.  

For this reason, we begin by classifying exceptions when $N = \ell^n$ is a prime power.  Given the results of \cite{sutherland}, \cite{anni}, and \cite{bc} in the case $n=1$, a key component of the proof of Theorem \ref{mainthm} is understanding when a global $\ell$-isogeny can be ``lifted" to a global $\ell^n$-isogeny using the local data of an $\ell^n$-isogeny locally almost everywhere.  We make the following definition:

\begin{defin}
Assume that $\C(E)$ globally has an $\ell$-isogeny, and locally almost everywhere has an $\ell^n$-isogeny.  If $\C(E)$ does not globally have an $\ell^n$-isogeny, then we say that $\ell$ is a \defi{lift-exceptional prime} for $K$ and call $(\ell^n, j(E))$ a \defi{lift-exceptional pair} for $K$.
\end{defin}

\begin{ithm}\label{ellnthm}
Let $K$ be a number field and $\ell$ an odd prime.  If $\ell$ is lift-exceptional, then $\ell \leq 6[K:\qq] +1$.
Moreover, for fixed $K$, there are finitely many $j$-invariants $j(E) \in K$ such that there exists a prime power $\ell^n$ such that $(\ell^n, j(E))$ is exceptional.
\end{ithm}

\begin{rem}
This result bounds lift-exceptional primes by a constant depending only upon $K$, not $E$.  This constant agrees with the bound for prime degree isogenies found by Anni \cite{anni} when $\sqrt{\left(\frac{-1}{\ell}\right) \ell} \notin K$.  It is not possible to strengthen this to an absolute constant.  For any $\ell$, there exists a number field $K$ and an elliptic curve $E$ defined over $K$ such that $\ell$ is a lift-exceptional prime (see Corollary \ref{counter}).
\end{rem}

The key computation underpinning Theorem \ref{ellnthm} is the following result (Theorem \ref{upthm}, which we prove in Sections \ref{lemup}  and \ref{exceptional}), which implies that lift-exceptional curves have highly constrained Galois action on their $\ell$-power torsion points.

\begin{iprop}\label{ellnthm_group}
Let $E$ be an elliptic curve over a number field $K$, and let $\ell$ be an odd prime such that $\C(E)$ has a global $\ell$-isogeny and locally almost everywhere has an $\ell^n$-isogeny.
If $\ell^k$ is the smallest power of $\ell$ for which $\C(E)$ fails to have an $\ell^k$-isogeny, then $k$ is odd.  Writing $k=2m+1$, for some $E'$ in the isogeny class of $E$, the image of $\rho_{E', \ell^{k}}$ must, up to conjugacy, be contained in the group
\[ R(\ell^{2m+1}) = \left\{ \begin{pmatrix} r & s \\ \ell^{2m} (\epsilon s) & \epsilon t \end{pmatrix} \ : \ r\equiv t \pmod{\ell^{m+1}}, \epsilon = \pm 1 \right\}. \]
Furthermore, $\C(E)$ has an $\ell^k$-isogeny defined over some quadratic extension of $K$.
\end{iprop}

Proposition \ref{ellnthm_group} applies only to odd primes $\ell$.  In Proposition \ref{2exceptions} we classify all exceptional Galois representations for $2^n$-torsion for $n \leq 6$ (in particular minimal ones).  This covers all counterexamples to the local-global principle for $2^n$-isogenies that could occur infinitely often over a number field.

When the base change $E_{\bar{K}}$ of $E$ to $\bar{K}$ has the extra structure of complex multiplication (\defi{geometric complex multiplication}) by an order $\O$ in an imaginary quadratic field $F$, we can classify exceptional pairs up to a factor which is polynomially small in the degree of $K$ over $\qq$.  We denote the Hilbert class field of $F$ by $H_F$.

\begin{ithm}\label{main_cm}
Let $E/K$ be an elliptic curve with geometric complex multiplication by an order $\O \subset \O_F$.  If $C$ is any integer of the form $C = \prod_i \ell_i^{n_i}$ where for all $i$, $\ell_i$ splits in $F$ and one of
\begin{itemize}
\item $\ell_i \equiv 1 \pmod{4}$ and $K \supset \qq(\sqrt{\ell_i})$, or
\item $\ell_i \equiv 3 \pmod{4}$ and $KF = K(\sqrt{-\ell_i})$, or
\item $\ell_i = 2$, $n_i \geq 3$ and $K \supset \qq(\sqrt{2})$ and $KF = K(\sqrt{-2})$,
\end{itemize}
then $\C(E)$ has a $C$-isogeny locally almost everywhere.  Conversely, if $\C(E)$ has an $N$-isogeny locally almost everywhere, then there exist relatively prime numbers $A, B, C$ such that $N = ABC$ and
\[A \leq (\#\O_F^\times [KF : H_F])^4 \leq (6[K:\qq])^4, \]
$\C(E)$ has a $B$-isogeny,  $\C(E)$ fails to have a $C$-isogeny, and $C$ is of the form above.  Moreover, if $F \subset K$ then $C=1$.
\end{ithm}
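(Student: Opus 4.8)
The plan is to reduce to a prime-by-prime study of the $\ell$-adic image of Galois, which for a CM curve is governed by the normalizer of a Cartan subgroup, and then to read off the conditions in the list from explicit eigenvalue computations. Write $N = \prod_i \ell_i^{n_i}$. As recalled in the introduction, $\C(E)$ has an $N$-isogeny locally almost everywhere (respectively, up to isogeny) if and only if it has an $\ell_i^{n_i}$-isogeny locally almost everywhere (respectively, up to isogeny) for every $i$, so it suffices to determine, for each prime $\ell$, which powers $\ell^n$ occur locally almost everywhere and which occur globally up to isogeny. Extending the Chebotarev translation of the introduction to prime powers and to isogeny classes: $\C(E)$ has an $\ell^n$-isogeny locally almost everywhere up to isogeny if and only if every $g \in G := \Im(\rho_{E,\ell^\infty}) \subseteq \GL_2(\zz_\ell)$ stabilizes a cyclic subgroup of order $\ell^n$ in $\qq_\ell^2/L$ for some lattice $L$ --- a condition on the two eigenvalues of $g$ which holds for all $n$ whenever those eigenvalues lie in $\qq_\ell$, and in general forces them to be $\ell$-adically close --- whereas $\C(E)$ has an $\ell^n$-isogeny up to isogeny if and only if a single lattice $L$ makes all of $G$ simultaneously stabilize such a subgroup.

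First I would invoke the classical description of $\rho_{E,\ell^\infty}$ for the CM curve $E$ via a Hecke character of $KF$ and its conjugate. For $\ell$ not dividing the conductor $f$ of $\O$, the image $G$ lies in the normalizer of a split (resp.\ nonsplit) Cartan subgroup according as $\ell$ splits (resp.\ is inert) in $F$, lies in the Cartan itself on restriction to $G_{KF}$, and has index in that Cartan bounded in terms of $\#\O_F^\times$ and $[KF:H_F]$ (note $H_F \subseteq F(j(E)) \subseteq KF$, and $[KF:H_F]=[K:K\cap F]/h_F \le [K:\qq]$); the finitely many primes with $\ell \mid f$ or $\ell$ ramified in $F$ admit a coarser analogous description. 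When $\ell$ is inert in $F$ and large relative to this index, $G$ contains a nonsplit-Cartan element generating the whole unramified quadratic ring over $\zz_\ell$, and such an element stabilizes no cyclic $\ell$-subgroup of any lattice; hence $\C(E)$ does not have an $\ell$-isogeny locally almost everywhere. Consequently the inert primes dividing $N$, together with the bad primes ($\ell \mid f$ or $\ell$ ramified), and the exponents to which all of these occur, are bounded in terms of the index; collecting the prime powers $\ell^{n}\,\|\,N$ among them for which $\C(E)$ has no global $\ell^n$-isogeny gives the factor $A$, and bookkeeping with the index bound yields $A \le (\#\O_F^\times[KF:H_F])^4 \le (6[K:\qq])^4$.

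Next I would treat the primes $\ell \nmid f$ splitting in $F$. If $F \subseteq K$, then $E[\ell^n]$ splits $G$-equivariantly as the sum of its two $\O/\ell^n$-eigenlines, each a $G_K$-stable cyclic subgroup of order $\ell^n$, so $E$ has an $\ell^n$-isogeny over $K$ for every $n$; this gives the final clause ($C=1$ when $F \subseteq K$) and places all such primes into $B$. If $F \not\subseteq K$, then $G$ meets the nontrivial coset of the Cartan inside its normalizer; a coset element written $g = \sm{0}{b}{c}{0}$ has determinant $\chi_{\cyc}(g) = -bc$, and by the eigenvalue criterion it stabilizes a cyclic $\ell^n$-subgroup after a change of lattice (for any $n$, when $\ell$ is odd) exactly when $bc$ is a square in $\zz_\ell^\times$. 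Running this over $g = \rho_{E,\ell^\infty}(\Frob_\p)$ for primes $\p$ of $K$ inert in $KF$ --- for which $\chi_{\cyc}(\Frob_\p) = \Nm_{K/\qq}(\p)$ --- local-almost-everywhere solvability forces $-\Nm_{K/\qq}(\p)$ to be a square in $\zz_\ell^\times$ for all such $\p$. A short Chebotarev argument on $KF\cdot K(\sqrt{\ell^*})$, where $\ell^* = (-1)^{(\ell-1)/2}\ell$, then shows (using $\bigl(\frac{-1}{\ell}\bigr) = (-1)^{(\ell-1)/2}$) that every $\p$ inert in $KF/K$ must split in $K(\sqrt{\ell})/K$ when $\ell \equiv 1 \pmod 4$, resp.\ stay inert in $K(\sqrt{-\ell})/K$ when $\ell \equiv 3 \pmod 4$, which forces $\sqrt{\ell} \in K$, resp.\ $KF = K(\sqrt{-\ell})$; conversely these conditions make the square condition automatic at every such $\p$, so $\C(E)$ then has an $\ell^n$-isogeny locally almost everywhere for all $n$. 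Whether it additionally has a global $\ell^n$-isogeny is a closed condition on $G$ (all coset elements must fix a common cyclic subgroup, which by the structure of the Hecke character translates into the self-conjugate character $\psi/\bar\psi$ having suitably small image): the split primes for which it holds join $B$, and the rest --- which by the above satisfy a listed condition --- join $C$. The prime $\ell = 2$ is handled by the analogous $2$-adic computation, where the squares in $\zz_2^\times$ are the units $\equiv 1 \pmod 8$; combined with the constraints the CM forces on the $2$-adic image, this accounts for the hypotheses $n_i \ge 3$, $\qq(\sqrt2) \subseteq K$, and $KF = K(\sqrt{-2})$ in the list.

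Finally I would assemble: the factors $A$ (bad and inert primes without a global power-isogeny), $B$ (primes carrying a global power-isogeny), and $C$ (split primes satisfying a listed condition but carrying no global power-isogeny) are supported on disjoint sets of primes, hence pairwise relatively prime, and $N = ABC$ with $C$ of the asserted form; the forward direction of the theorem is exactly the easy half of the third-paragraph analysis together with the $F \subseteq K$ remark. The main obstacle is the bookkeeping of the second paragraph: making the index of the CM image inside the Cartan normalizer explicit enough --- uniformly across split, inert, ramified, and conductor primes, and for non-maximal orders $\O$ --- to bound the $\ell$-primary contributions to $A$, primes and exponents alike, and to pin the clean estimate $A \le (\#\O_F^\times[KF:H_F])^4$. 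The $2$-adic analysis is technically the fussiest point but is a finite computation.
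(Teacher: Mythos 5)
Your overall route matches the paper's: factor $N$ into prime powers, exploit the normalizer-of-Cartan structure of the CM Galois image, split by the decomposition type of $\ell$ in $F$ to derive the field conditions in the list from the square-discriminant criterion via Chebotarev, and absorb the remaining prime powers into $A$ via an index bound on $\rho_{\ell^n}(G_{KF})$ inside the full Cartan. Two points, however, are deferred in a way that understates the work, and one of them is the real content of the theorem.

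First, the paper opens by quoting a lemma (from \cite{ehom}) that a $K$-curve with CM by a non-maximal order $\O$ is $K$-isogenous to a curve with CM by the maximal order $\O_F$. Since the statement is up to isogeny, this removes the conductor $f$ from consideration at the outset; the ``coarser analogous description'' you invoke at conductor primes is therefore unnecessary, and trying to carry non-maximal orders through the Cartan analysis would be strictly harder than this reduction. Second, and more substantively, the bound
\[ A \leq \bigl(\#\O_F^\times\,[KF:H_F]\bigr)^4 \leq \bigl(6[K:\qq]\bigr)^4 \]
is not ``bookkeeping.'' The exponent $4$ arises because in every case that does not fall into a listed condition --- $\ell$ inert, $\ell \mid d_F$ with $n \geq 2$, $\ell = 2$ split with $n = 1,2$, and the split primes with $F \not\subset K$ whose field condition fails --- one must prove the uniform lower bound $[C_{\ell^n}(\O) : \rho_{\ell^n}(G_{KF})] \geq \ell^{n/4}$. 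Each of those subcases needs its own valuation/discriminant estimate (for $\ell$ inert the sharper bound $\ell^{\lceil n/2\rceil-1}(\ell+1)$; for $\ell$ ramified $\ell^{\lfloor n/2\rfloor}$; for split $\ell$ a quantity $\ell^{r-1}(\ell-1)$ tied to $r := \min v_\ell(b)$; and at $\ell=2$ a strengthened Hensel argument modulo $2^3$). The chain
\[ 6d \geq [C_N(\O):\rho_N(G_{KF})] \geq \prod_j [C_{p_j^{m_j}}(\O):\rho_{p_j^{m_j}}(G_{KF})] \geq \prod_j p_j^{m_j/4} = A^{1/4} \]
is then what produces the fourth power. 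You flag this as ``the main obstacle'' but give no indication of where the $1/4$ comes from, and the $2$-adic piece is not merely a finite computation: it is the case that forces the $n_i \geq 3$ and $K \supset \qq(\sqrt2)$, $KF = K(\sqrt{-2})$ hypotheses, via a Hensel-type statement about squares in $\zz_2^\times$ being detected modulo $8$. This is the content of Proposition~\ref{cmprimepower} and the proof of Theorem~\ref{cm_nicebound}, and it is precisely the part your sketch does not touch.
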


When $N = \ell$ is prime, the quartic polynomial bound for $A$ can be improved to linear.  More precisely, we show in Section \ref{cm} that for $\ell > 6[K:\qq]+1$, the pair $(\ell, j(E))$ is exceptional if and only if $F \not \subset K$ and $\ell$ is of the form of the integer $C$ in the statement of Theorem \ref{main_cm}.  This shows that if $K$ does not contain the CM field $F$, then exceptional primes can be arbitrarily large compared to the degree $[K:\qq]$, correcting \cite[Lemma 6.1]{anni} and the final remark in \cite{sutherland}.

In addition to classifying exceptions for general number fields $K$, Sutherland proves that there is exactly one counterexample to the local-global principle for $\ell$-isogenies when $K=\qq$, namely $(\ell, j(E)) = (7, 2268945/128)$.  We extend this to prime power degrees and prove

\begin{ithm}\label{rationalpoints}
Let \(\C(E/\qq)\) have an \(\ell^n\)-isogeny locally almost everywhere.  If \(E\) is not \(\qq\)-isogenous to a curve with an \(\ell^n\)-isogeny over \(\qq\), then \(\ell = 7\), \(n=1\) or \(2\), and \(j(E) = 2268945/128\).
\end{ithm}

As for isogenies of prime degree, these theorems are proved by analyzing the mod $N$ Galois representation attached to $E$ and modular curves $X_H$ for exceptional subgroups $H \subseteq \GL_2(\zz/N\zz)$.  
The paper is organized as follows: results on prime power isogenies and lift-exceptional pairs are covered in Sections \ref{sec_gpthy} -- \ref{jinvs} of the paper.  In Section \ref{prelim} we cover the basic preliminaries on Galois representations and modular curves, to set the stage for the remainder of the paper.  In Section \ref{sec:rephrase} we rephrase the local-global question in terms of $\Im(\rho_{E, \ell^n})$ as a subgroup of $\GL_2(\zz/\ell^n \zz)$ and lay out a framework for the group-theoretic analysis to follow in Section \ref{sec_gpthy}.  The group theory necessary to classify lift-exceptional subgroups of $\GL_2(\zz/\ell^n \zz)$ is contained in Sections \ref{lemup} and \ref{exceptional}.  At this point we will be able to deduce Proposition \ref{ellnthm_group}.  The results of previous sections do not apply to the prime $2$, so in Section \ref{two} we summarize the techniques used to computationally investigate this problem.  In Section \ref{boundell} we prove the boundedness of lift-exceptional primes stated in Theorem \ref{ellnthm}.   We prove the finiteness of lift-exceptional pairs from Theorem \ref{ellnthm} and the finiteness result of Theorem \ref{mainthm} in Section \ref{jinvs} by examining the appropriate modular curves.  Section \ref{cm} contains more precise characterizations of exceptional primes for curves with complex multiplication and a proof of Theorem \ref{main_cm}.  Finally, in Section \ref{qpoints}, we turn to the problem of finding the rational points on the relevant modular curves to prove Theorem \ref{rationalpoints}.  This relies heavily on the recent work on Rouse--Zureick-Brown \cite{rzb} and Sutherland--Zywina \cite{sz}.


\section{Preliminaries}\label{prelim}

\subsection{Galois Representations}

Let $E$ be an elliptic curve over $K$ and $N$ a natural number.  Fix an algebraic closure $\bar{K}$ of $K$.  We will denote by $E[N]$ the group of $N$-torsion points of $E$ over $\bar{K}$, which is endowed with a linear action of $\Gal(\bar{K}/K) \equalscolon G_K$.  We therefore have a map $G_K \to \Aut(E[N])$.
Choosing an isomorphism $(\zz/N\zz)^2 \simeq E[N]$ gives an identification of $\Aut(E[N])$ with $\GL_2(\zz/N\zz)$ by action on the left (via column vectors).  This defines a representation
\[\rho_{E,N} \colon G_K \to \GL_2(\zz/N\zz), \]
which we refer to as the \defi{mod $N$ Galois representation} attached to $E/K$.
Choosing compatible bases for all $E[N]$, 
we can define the \defi{adelic Galois representation}
\[\rho_{E, \infty} \colon G_K \to \Aut\left( \varprojlim_{N} E[N]\right) \simeq \GL_2(\hat{\zz}). \]
Composition with reduction modulo $N$ recovers the mod $N$ Galois representation $\rho_{E, N}$.
Similarly composition with the surjection $\hat{\zz} \to \zz_\ell$ defines the \defi{$\ell$-adic Galois representation} 
\[ \rho_{E,\ell^\infty} \colon G_K \rightarrow \GL_2(\zz_\ell), \]
which is the inverse limit of the mod $\ell^n$ Galois representations.
Let us fix the notation $G_E \subset \GL_2(\hat{\zz})$ for the image of the adelic Galois representation of an elliptic curve $E/K$.  Let $G_E(\ell^n)$ denote the image of $G_E$ under reduction $\GL_2(\hat{\zz}) \to \GL_2(\zz/\ell^n \zz)$, i.e.\ the image of the mod $\ell^n$ representation.

\subsection{Subgroups of $\GL_2(\zz/\ell^n \zz)$}

Recall that a \defi{Borel subgroup} of $\GL_2(\zz/\ell^n \zz)$ is the subgroup of automorphisms of $(\zz/\ell^n \zz)^2$ that preserve a specified submodule $L \simeq \zz/\ell^n \zz \subset (\zz/\ell^n \zz)^2$ (which we will refer to as a line).  Choosing a basis compatible with the line $L \subset \left( \zz/\ell^n\zz\right)^2$, the Borel subgroup associated to $L$ can be identified with matrices of the form $\sm{\Ast}{\Ast}{0}{\Ast}$.

Call two lines $L$ and $L'$ in $\left( \zz/\ell^n \zz\right)^2$ \defi{independent} if they give a direct sum decomposition $L \oplus L' \simeq \left( \zz/\ell^n \zz\right)^2$.  Associated to two such lines is a \defi{split Cartan subgroup} consisting of linear automorphisms of $ \left( \zz/\ell^n \zz\right)^2$ preserving both $L$ and $L'$.  Again in an appropriate basis, a split Cartan subgroup can be identified with the subgroup of diagonal matrices, i.e., matrices of the form $\sm{\Ast}{0}{0}{\Ast}$.

We make one new definition which will be useful in what follows:

\begin{defin}
A \defi{radical subgroup} of $\GL_2(\zz/\ell^n\zz)$ is one that fixes a line $L \subset (\zz/\ell^n\zz)^2$ and an isomorphism between $L$ and the quotient $(\zz/\ell^n\zz)^2/L$ up to a sign.  Hence there exists a basis in which a radical subgroup acts as
\[\begin{pmatrix} \chi & \Ast \\ 0 & \pm \chi \end{pmatrix}.\]
\end{defin}
Radical subgroups occur ``in nature" as the image of $\rho_{E, \ell}$ for $E$ with CM by an order in a field $F$ in which $\ell$ ramifies \cite[Thm 13.1.2]{gross}.

Finally, recall that a \defi{nonsplit Cartan subgroup} of $\GL_2(\zz/\ell\zz)$ is a cyclic subgroup isomorphic to $\ff_{\ell^2}^\times$ acting on the $\ff_\ell$-vector space $\ff_{\ell^2} \simeq \ff_{\ell}^2$.  Explicitly, when $\ell \neq 2$, let $\epsilon$ be a nonquadratic residue mod $\ell$.  Then in an appropriate basis, a nonsplit Cartan subgroup can be identified with matrices of the form $\sm{x}{\epsilon y}{y}{x}$  for $x,y \in \ff_\ell$ not both $0$.

There is an involution on the set of subgroups of $\GL_2(\zz/N\zz)$ sending a group to its transpose
\[G^T \colonequals \{g^T : g \in G\}.\]
Many relevant subgroups of $\GL_2(\zz/N\zz)$ are conjugate to their transpose; for example Borel and Cartan subgroups.

\subsection{Modular Curves}

We begin by recalling the definition of the modular curve $X(N)$.  This is the coarse space of the smooth compactification of the stack whose $S$ points parameterize (up to isomorphism) pairs $(E, \iota)$, where $E/S$ is an elliptic curve and 
\[\iota \colon \left(\zz/N\zz\right)^2_S \to E[N] \]
is an isomorphism.  Note that this is the ``big" modular curve at level $N$: it is geometrically disconnected, with components over $\qq(\mu_N)$ in bijection with the primitive elements of $\mu_N$.

Precomposition of $\iota$ with $g^{-1} \in \GL_2(\zz/N\zz)$ 
\[ g(\iota) = \iota \circ g^{-1} \]
defines a \emph{left} action of $\GL_2(\zz/N\zz)$ on $X(N)$.
The map $X(N)$ to the $j$-line $X(1)$ forgetting the level structure at $N$ is a Galois covering with group $\GL_2(\zz/N\zz)/\{\pm 1\}$. 
Note that the Galois group $G_\qq$ also acts on $X(N)$ by postcomposition with $\iota$ on the left.

Let $G$ be a subgroup of $\GL_2(\zz/N\zz)$ containing $-I \colonequals \sm{-1}{0}{0}{-1}$.  We can then define the modular curve $X_G$ as the (coarse space of the) quotient of $X(N)$ by the action of $G \subseteq \GL_2(\zz/N\zz)$.  The $K$-points of $X_G$ parameterize pairs $(E, \mathcal{C})$, where $E$ is an elliptic curve over $K$ and $\mathcal{C}$ is an equivalence class of isomorphisms $\iota \colon (\zz/N\zz)^2 \to E[N]$, where $\iota \sim \iota'$ if there exists $g \in G$ such that $\iota' = \iota \circ g^{-1}$.

\begin{lem}[{\cite[Lemma 2.1]{rzb}}]
There exists $\iota$ such that $(E, \iota)$ gives rise to a $K$-point of $X_G$ if and only if $\Im(\rho_{E, N})$ is contained in a subgroup of $\GL_2(\zz/N\zz)$ conjugate to $G$.
\end{lem}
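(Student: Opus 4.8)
The plan is to translate $G_K$-invariance of a point of $X_G(\bar{K})$ into a containment of Galois images, exploiting the description of $X_G$ as the quotient $G \backslash X(N)$. Since we are working over a number field and $X_G$ is a scheme of finite type over $K$, a $\bar{K}$-point of $X_G$ is the image of a $K$-point if and only if it is fixed by $G_K$: a $G_K$-fixed closed point has residue field purely inseparable over $K$, hence equal to $K$ in characteristic zero. So the whole problem reduces to making the $G_K$-action on $X(N)(\bar{K})$, and hence on $X_G(\bar{K})$, completely explicit.

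First I would fix $E/K$ together with a single isomorphism $\iota_0 \colon (\zz/N\zz)^2 \to E[N]$ over $\bar{K}$, and compute $\rho_{E,N}$ in the basis determined by $\iota_0$. Because $E$ is defined over $K$, the group $G_K$ acts on $E[N]$, and the Galois action on $X(N)$ described above (postcomposition on the level structure) gives $\sigma \cdot (E,\iota_0) = (E, \sigma \circ \iota_0)$. By the defining property of the mod $N$ representation, $\sigma \circ \iota_0 = \iota_0 \circ \rho_{E,N}(\sigma)$, so $\sigma \cdot (E,\iota_0) = (E, \iota_0 \circ \rho_{E,N}(\sigma)) = \rho_{E,N}(\sigma)^{-1} \cdot (E,\iota_0)$, using the left action $g(\iota) = \iota \circ g^{-1}$ of $\GL_2(\zz/N\zz)$. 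Passing to the quotient $X_G$, the class of $(E,\iota_0)$ is $G_K$-fixed precisely when $\rho_{E,N}(\sigma)^{-1}(E,\iota_0)$ and $(E,\iota_0)$ lie in the same $G$-orbit for every $\sigma$, i.e.\ precisely when $\rho_{E,N}(\sigma) \in G$ for all $\sigma$. Here the hypothesis $-I \in G$ is exactly what lets one absorb the automorphism $[-1]$ of $E$ into $G$ — equivalently, the $\GL_2(\zz/N\zz)$-action on $X(N)$ factors through $\GL_2(\zz/N\zz)/\{\pm I\}$, so the orbit condition pins down $\rho_{E,N}(\sigma)$ only up to $\pm I$ — and for $j(E) \neq 0,1728$ this is the only automorphism one must account for. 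Conclusion of this step: $(E,\iota_0)$ gives rise to a $K$-point of $X_G$ if and only if $\Im(\rho_{E,N}) \subseteq G$ in the basis of $\iota_0$.

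It remains to let the trivialization vary. Replacing $\iota_0$ by $\iota_0 \circ h^{-1}$ for $h \in \GL_2(\zz/N\zz)$ replaces $\rho_{E,N}$ by its conjugate $h \, \rho_{E,N}(\cdot) \, h^{-1}$, and every trivialization of $E[N]$ arises in this way. Hence there exists a choice of $\iota$ making $(E,\iota)$ a $K$-point of $X_G$ if and only if $h \, \Im(\rho_{E,N}) \, h^{-1} \subseteq G$ for some $h \in \GL_2(\zz/N\zz)$, which is exactly the statement that $\Im(\rho_{E,N})$ is contained in a subgroup of $\GL_2(\zz/N\zz)$ conjugate to $G$. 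That is the asserted equivalence.

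The content here is bookkeeping rather than depth. The one point requiring care is consistency of conventions: one must fix the left-versus-right normalizations for both the $\GL_2(\zz/N\zz)$-action on $X(N)$ and the definition of $\rho_{E,N}$ so that the identity $\sigma \cdot (E,\iota_0) = \rho_{E,N}(\sigma)^{-1}(E,\iota_0)$ holds on the nose. The second point is the coarse-space subtlety at the elliptic points $j = 0, 1728$, where $\Aut(E) \supsetneq \{\pm 1\}$; this is handled either by excluding those finitely many $j$-invariants or by checking directly that the extra automorphisms change nothing, and in any case the running hypothesis $-I \in G$ does the essential work. I do not expect any deeper obstacle.
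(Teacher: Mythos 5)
The paper does not prove this lemma; it simply cites \cite[Lemma~2.1]{rzb}, so there is no in-paper argument to compare against. Your proof reconstructs the standard descent argument (which is essentially the one in the cited source) and is correct in its main thrust: fix a trivialization $\iota_0$, compute that $\sigma\cdot(E,\iota_0)=\rho_{E,N}(\sigma)^{-1}\cdot(E,\iota_0)$ using the paper's conventions (postcomposition for Galois, $g(\iota)=\iota\circ g^{-1}$ for $\GL_2$, and $\sigma\circ\iota_0=\iota_0\circ\rho_{E,N}(\sigma)$), observe that the class of $(E,\iota_0)$ in $G\backslash X(N)$ is $G_K$-fixed iff $\Im(\rho_{E,N})\subseteq G\cdot\{\pm I\}=G$, note that $G_K$-fixedness of a $\bar K$-point is equivalent to being a $K$-point over a characteristic-zero field, and then let $\iota_0$ vary to get conjugacy. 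All of the bookkeeping checks out against the conventions set up in Section~\ref{prelim}.

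One remark is worth making, and you already half-make it. The orbit computation ``$(E,\iota_0\circ g^{-1})=(E,\iota_0)$ in $X(N)(\bar K)$ iff $g\in\{\pm I\}$'' uses that $\Aut(E)=\{\pm 1\}$, which fails for $j\in\{0,1728\}$; there the $\Rightarrow$ direction (rational point implies image conjugate into $G$) genuinely needs care because one only concludes $\rho_{E,N}(\sigma)\in \bigl(\iota_0^{-1}\Aut(E)\iota_0\bigr)\cdot G$, which is not a subgroup condition in general. You wave at this by saying one can exclude those $j$-invariants or ``check directly that the extra automorphisms change nothing''; the second option is not automatic. In fact the original \cite[Lemma~2.1]{rzb} carries exactly this caveat, stating the converse direction only for $j(E)\ne 0,1728$, and the paper's citation drops it. So your proof is faithful to what is actually true, and the hedge you flagged is a real one rather than a removable technicality; it would be cleaner to state the converse with the $j\ne 0,1728$ hypothesis, as the cited source does, rather than suggest it can necessarily be argued away.
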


Note that in \cite{rzb} the Galois representation is defined in terms of row vectors, and so one must take the transpose $G \mapsto G^T$ to match our modular curves.

If $G$ does not have surjective determinant, then $X_G$ is also geometrically disconnected.  It is classical that a connected component $X(N)^\circ$ of $X(N)$ can be described geometrically by
\[X(N)^\circ(\cc) \simeq \Gamma(N) \backslash \hh^*, \]
where $\hh^*$ is the extended upper half plane $\hh \cup \pp^1(\qq)$ and $\Gamma(N)$ is the full modular group of level $N$.  There is a similar description in the case of $X_G$.  Given a subgroup $S \subset \SL_2(\zz/N\zz)$ define the congruence subgroup 
\[\Gamma_S \colonequals \{s\in \SL_2(\zz) : (s \mod N) \in S \}. \]
For convenience let $\bar{G} \colonequals G\cap \SL_2(\zz/N\zz)$.

\begin{lem}
Let $G \subset \GL_2(\zz/N\zz)$ contain $-I$.  Then the connected components of $X_G$ are indexed by $G/\bar{G}$ and the complex points of one connected component $X_G^\circ$ can be realized as
\[ X_G^\circ(\cc) \simeq \Gamma_{\bar{G}} \backslash\hh^*. \]
\end{lem}
\begin{proof}
This follows from the classical fact for $X(N)$.  Indeed by the description of the components of $X(N)(\cc)$, the action of $\bar{G}$ preserves the components (and is in fact the stabilizer of any one component).  This action restricts on $\Gamma(N) \backslash \hh^*$ to the action of $\Gamma_{\bar{G}} / \Gamma(N) \simeq \bar{G}$.

\begin{center}
\begin{tikzcd}
&  \Gamma(N) \backslash\hh^* \arrow{ld}{\bar{G}/\{\pm 1\}} \arrow{rd}{\text{conn. comp.}} \\
\Gamma_{\bar{G}} \backslash \hh^*\arrow{rd} & & X(N)(\cc) \arrow{ld}{\bar{G}/\{\pm 1\}} \\
& X_{\bar{G}}(\cc) \arrow{d} \\
& X_G(\cc) 
\end{tikzcd}
\end{center}

The result now follows from the following Lemma.
\end{proof}

\begin{lem}
Let $C = \coprod_{i \in I} C_i$ be a smooth (possibly disconnected) curve with componets indexed by $i \in I$.  Let $G$ be a finite group acting on $C$.  In particular, $G$ acts on $I$ and we write $\Stab_G(i)$ for the stabilizer in $G$ of $i \in I$.  Then
\[C/G \simeq \coprod_{i \in I/G} C_i / \Stab_G(i).\]
\end{lem}
\begin{proof}
Write $G \cdot i$ for the orbit of $i \in I$ under $G$.  Clearly we have
\[C \simeq \coprod_{i \in I/G} \left( \coprod_{j \in G\cdot i} C_j\right).\]
Since the action of $G$ respects this decomposition into orbits, it suffices to consider the case where there is only one orbit.  The result now follows from the fact that if $G$ acts transitively on $I$, then $C/G \simeq C_i / \Stab_G(i)$
for any $i \in I$.
\end{proof}

In particular this implies that we may compute the genus of $X_G$ using the description $\Gamma_{\bar{G}} \backslash\hh^*$.

\section{Group-theoretic rephrasing}\label{sec:rephrase}

We now translate the conditions of $\C(E)$ having an $\ell^n$-isogeny and $\C(E)$ locally almost everywhere having an $\ell^n$-isogeny into the language of Galois representations.

\subsection{Prime power isogenies in $\C(E)$}

We say cyclic isogenies $\phi_1$ and $\phi_2$ with kernels $C_1 \subset \left(\zz/N_1 \zz\right)^2$, $C_2 \subset \left(\zz/N_2 \zz\right)^2$ are \defi{independent} if the lines $\bar{C}_1, \bar{C}_2$ are independent in $\left(\zz/\gcd(N_1, N_2) \zz\right)^2$.

\begin{lem}\label{lem:isogeny_class_has_elln}
Suppose that \(E/K\) is \(\ell^a\)-isogenous to a curve \(E'/K\) which has independent \(\ell^b\)- and \(\ell^c\)-isogenies (which are also independent from the dual of the \(\ell^a\)-isogeny) with \(b+c = n\), and \(0 \leq a\leq c \leq b\).  Then, up to conjugacy, the mod \(\ell^n\) Galois representation \(G_E(\ell^n)\) is contained in the group
\[ A(\ell^n) = A_{a, c, b}(\ell^n) = \left\{ \begin{pmatrix} \alpha & \beta \\ \gamma & \delta \end{pmatrix} : \gamma \equiv 0 \pmod{\ell^{a+b}}, \delta \equiv \alpha - \ell^a \beta \pmod{\ell^c} \right\}.\]
\end{lem}
\begin{rem}
Notice that when \(a=0\), this recovers the case that \(G_{E'}(\ell^n)\) is contained in a Cartan subgroup modulo \(\ell^c\) and a Borel subgroup modulo \(\ell^b\) for \(b + c = n\).
\end{rem}

\begin{proof}[Proof Sketch of Lemma \ref{lem:isogeny_class_has_elln}]
Choose a basis \(e_1, e_2\) for \(\ell\)-adic Tate module \(T_\ell E'\) of \(E'\) so that, modulo \(\ell^b\) and \(\ell^c\), the vectors \(e_1, e_2\) correspond to the given independent \(\ell^b\)- and \(\ell^c\)-isogenies.  Since \(E'\) also has an independent \(\ell^a\)-isogeny (and \(a \leq c, b\)), the projectivization of the \(\ell^a\)-torsion is a trivial Galois module.  Hence,
\[ G_{E'}(\ell^n) \subset \left\{ \begin{pmatrix} x & \ell^c y \\ \ell^bz & x + \ell^aw  \end{pmatrix} : x,y,z,w \in \zz/\ell^n\zz \right\}.\]
By the \(\ell^a\)-isogeny, the \(\ell\)-adic Tate module \(T_\ell E\) of \(E\) injects into \(T_\ell E'\) as an index \(\ell^a\)-sublattice \(\zz_\ell v + \ell^a\zz_\ell^2\) for some \(v \in \zz_\ell^2\).  Since \(v, e_1, e_2\) are all distinct modulo \(\ell\), by acting by \(\PGL_2\), we may assume that \(v = e_1 + e_2\).  To obtain the Galois action on the lattice spanned by \(\ell^ae_1\) and \(e_1 + e_2\), we conjugate
\[ \ell^{-a}\begin{pmatrix} 1 & -1 \\ 0 & \ell^a \end{pmatrix}\begin{pmatrix} x & \ell^c y \\ \ell^bz & x + \ell^aw  \end{pmatrix} \begin{pmatrix} \ell^a & 1 \\ 0 & 1 \end{pmatrix} 
= \begin{pmatrix} x - \ell^{b}z & \ell^{c-a}y - \ell^{b-a}z - w \\ \ell^{a + b}z & \ell^{b}z + x + \ell^{a} w  \end{pmatrix}.\]
The group of matrices of this shape is exactly \(A(\ell^n)\).
\end{proof}

An isogeny class \(\C\) has an \(\ell^n\)-isogeny if and only if for every element \(E \in \C\), we have that \(G_E(\ell^n)\) is conjugate to a subgroup of \(A_{a,c,b}(\ell^n)\) for some choice of \(0 \leq a \leq c \leq b\) with \(b + c = n\).

The local condition can be stated more simply.  For $j \leq n/2$, write $\Delta_j \subseteq \GL_2(\zz/\ell^n \zz)$ for the subgroup of matrices that are diagonal modulo $\ell^j$ and upper-triangular modulo $\ell^{n-j}$. 

\begin{lem}\label{lem_exceptional_criterion}
Let $M \in \Mat_2(\zz/\ell^n\zz)$ be a $2 \times 2$ matrix. Then the characteristic polynomial $\chi_M(t) = t^2 - \tr(M)t + \det(M)$ has a root in $\zz/\ell^n\zz$ if and only if, up to conjugation by an element of $\GL_2(\zz/\ell^n\zz)$, $M$ is diagonal modulo $\ell^j$ and upper-triangular modulo $\ell^{n-j}$ for some $j \leq n/2$.
\end{lem}

\begin{proof}
In the reverse direction, up to conjugacy we have that $M = \begin{pmatrix} a & \ell^j b \\ \ell^{n-j}c & d \end{pmatrix}$, and so $\chi_M(t) = (t-a)(t-d)$ has a root in $\zz/\ell^n\zz$.

For the forward direction, we first record the following elementary observations.  Let $I$ denote the identity matrix.  
\begin{enumerate}[(I)]
\item\label{shift} If $M = M' + cI$ for some constant $c \in \zz/\ell^n\zz$, then 
\[\chi_M(t) \text{ has a root mod $\ell^n$} \qquad \Leftrightarrow \qquad \chi_{M'}(t)\text{ has a root mod $\ell^n$}. \]
\item\label{invt_sc} If $M = aM''$ for some constant $a \in (\zz/\ell^n\zz)^\times$, then 
\[\chi_M(t) \text{ has a root mod $\ell^n$} \qquad \Leftrightarrow \qquad \chi_{M''}(t)\text{ has a root mod $\ell^n$}. \]
\item\label{ell_sc} If $M = \ell M'''$, then
\[\chi_M(t) \text{ has a root mod $\ell^n$} \qquad \Leftrightarrow \qquad \chi_{M'''}(t)\text{ has a root mod $\ell^{n-2}$}. \]
\end{enumerate}
\noindent We now break into the following cases:
\begin{enumerate}
\item \textit{$M$ has distinct eigenvalues modulo $\ell$.}  In this case, the projectivized eigenvector equation for $M$ has a solution mod $\ell$ with nonzero derivative.  Hence by Hensel's lemma it has a solution modulo all orders.  Therefore $M$ has an eigenvector in $(\zz/\ell^n \zz)^2$, and hence is conjugate to an upper-triangular matrix.
\item \textit{$M$ is scalar modulo $\ell$.}  Up to subtracting a scalar matrix and applying \eqref{shift}, we have that $M = \ell M'$; hence by \eqref{ell_sc} and induction we can assume that $n\leq 2$.  The conclusions are then satisfied as it is diagonal modulo $\ell$.
\item  \textit{$M$ is not diagonalizable modulo $\ell$.}  In this case, up to conjugating, adding a scalar \eqref{shift}, and scaling \eqref{invt_sc}, we can assume that $M = \begin{pmatrix} 0 & 1 \\ a\ell & b \ell \end{pmatrix}$.  In this case $\begin{pmatrix} 1 \\ x \end{pmatrix}$ is an eigenvector precisely when $x$ is a root of $\chi_M(t)$.
\end{enumerate}
\end{proof}

\subsection{Chebotarev density theorem}\label{cheb}

The Chebotarev density theorem will allow us to translate $\bs_{\ell^n}$ into a statement only involving the image of the global Galois representation attached to $E$ as a subgroup of $\GL_2(\zz/ \ell^n\zz)$.

Let $K_{n} \colonequals K(E[\ell^n])$ be the $\ell^n$-division field of $E$ and assume that $\p \nmid \ell^n \cdot N_E$.  Then the residue field extension of $ K_n/K$ for any prime above $\p$ is the $\ell^n$-division field of $E_\p$.  As the Galois group $\Gal(k_\p(E_\p[\ell^n])/k_\p)$ is cyclic, generated by the $|k_\p|$-power Frobenius $\varphi_\p \colonequals (x \mapsto x^{|k_\p|})$, the condition that $\rho_{E_\p, \ell^n}(G_{k_\p})$ is conjugate to a subgroup of $G$ reduces to the condition that $\rho_{E_\p, \ell^n}(\varphi_\p)$ is conjugate to a element of $G$.
Under our assumption on $\p$, the pair $(E, E[\ell^n])$ has good reduction at $\p$.  Explicitly, if $P_1, P_2$ are a basis of the $\ell^n$-torsion $E[\ell^n]$ used to define the mod $\ell^n$ Galois representation, we can choose an integral model $(\fE, (\fP_1, \fP_2))$ over $\Spec \O_{K}\left[\frac{1}{\ell^n\cdot N_E}\right]$ so that over the special fiber $E_{\p}$, the points $\bar{P_1}, \bar{P_2}$ are a basis of $E_{\p}[\ell^n]$.  
Hence the condition that $\rho_{E_\p, \ell^n}(\varphi_\p)$ is conjugate to an element of $G$ is equivalent to the condition that $\rho_{E, \ell^n}(\Frob_\p)$ is conjugate to an element of $G$, where $\Frob_\p$ is a Frobenius element at $\p$.

\begin{lem}\label{lem_localgroup}
The following are equivalent:
\begin{enumerate}[(a)]
\item\label{local_a} The isogeny class \(\C(E)\) has an \(\ell^n\)-isogeny locally almost everywhere;
\item\label{local_b} Every element of $\rho_{E, \ell^n}(G_K)$ has a root of its characteristic polynomial mod \(\ell^n\);
\item\label{local_c} Every element of $\rho_{E, \ell^n}(G_K)$ is conjugate to a element of one of $\left\{\Delta_0, \dots, \Delta_{\lfloor \frac{n}{2} \rfloor}\right\}$.
\end{enumerate}
\end{lem}
\begin{proof}
By Lemma \ref{lem_exceptional_criterion}, criteria \eqref{local_b} and \eqref{local_c} are equivalent.  We will show that \eqref{local_a} implies \eqref{local_b} and that \eqref{local_c} implies \eqref{local_a}.  

Suppose that \eqref{local_a} holds and let \(E'_\p \in \C(E_\p/k_\p)\) have an \(\ell^n\)-isogeny.  Then the generator $\rho_{E'_{\p}, \ell^n}(\varphi_{\p})$ has a root of its characteristic polynomial mod \(\ell^n\).  Since the (integral) trace and determinant of Frobenius are isogeny invariants, this implies that the generator $\rho_{E_{\p}, \ell^n}(\varphi_{\p})$ has a root of its characteristic polynomial mod \(\ell^n\).  By the local-global compatibility, this implies that $\rho_{E, \ell^n}(\Frob_\p)$ has a root of its characteristic polynomial mod \(\ell^n\).  By the Chebotarev density theorem, this implies \eqref{local_b}.

Now assume \eqref{local_c}.  By the Chebotarev density theorem, for every good prime $\p \nmid \ell^n  N_E$,  we have that $\rho_{E, \ell^n}(\Frob_\p)$ is conjugate to an element of one of $\left\{\Delta_0, \dots, \Delta_{\lfloor \frac{n}{2} \rfloor}\right\}$.  Hence \(E_\p\) has independent \(\ell^j\) and \(\ell^{n-j}\) isogenies for some \(0 \leq j \leq n/2\).  By composing with the dual of one of these isogenies, we see that \(\C(E_\p/k_\p)\) has an \(\ell^n\)-isogeny.
\end{proof}

\subsection{Exceptional subgroups}

In light of the Chebotarev density theorem and its implications for local and global level structure explicated in $\S$\ref{cheb}, whether or not $\C(E)$ satisfies the local-global principle for $\ell^n$-isogenies depends only upon $G_E(\ell^n)$ as a subgroup of $\GL_2(\zz/\ell^n\zz)$.

In light of this, we begin by making several definitions of exceptional subgroups.  If $G \subseteq \GL_2(\zz/\ell^n\zz)$, we use the notation $G(\ell^j)$ to denote the image of $G$ under the surjection $\GL_2(\zz/\ell^n\zz) \to \GL_2(\zz/\ell^j\zz)$.

\begin{defin}
A subgroup \(G \subset \GL_2(\zz/\ell^n \zz)\) is \defi{exceptional} if 
\begin{enumerate}[(i)]
\item for every element \(g \in G\), the characteristic polynomial \(\chi_g(t)\) has a root in \(\zz/\ell^n \zz\); equivalently, if \(\ell \neq 2\), for all \(g \in G\), \(\Delta(g) = \tr(g)^2 - 4 \det(g)\) is a square in \(\zz/\ell^n \zz\);
\item there do not exist \(0 \leq a \leq c \leq b\) with \(b+c = n\) such that \(G\) is conjugate to a subgroup of \(A_{a,c, b}(\ell^n)\).
\end{enumerate}
\end{defin}

Given that we work inductively modulo higher and higher powers of $\ell$,
it is convenient to have a group-theoretic phrasing of the condition that $\C(E)$ satisfies the local-global principle for $\ell^{n-1}$-isogenies but fails the local-global principle for $\ell^{n}$-isogenies.

\begin{defin}\label{lift_defi}
We will say that a subgroup \(G \subset \GL_2(\zz/\ell^n \zz)\) is \defi{lift-exceptional (at step \(n\))} if 
\begin{enumerate}[(i)]
\item\label{defin:lift_1} \(G(\ell^{n-1})\) is contained in a Borel subgroup of \(\GL_2(\zz/\ell^{n-1}\zz)\);
\item\label{defin:lift_2} for every \(g \in G\), the characteristic polynomial \(\chi_g(t)\) has a root in \(\zz/\ell^{n-1} \zz\);
\item\label{defin:lift_3} \(G(\ell)\) is not contained in a split Cartan subgroup of \(\GL_2(\zz/\ell\zz)\);
\item\label{defin:lift_4} \(G\) is not contained in a Borel subgroup of \(\GL_2(\zz/\ell^n\zz)\);
\item\label{defin:lift_5} there does not exist \(0 \leq a \leq \lfloor (n-1)/2 \rfloor\) such that \(G\) is conjugate to a subgroup of \(A_{a, a+1, n-a-1}(\ell^n)\).
\end{enumerate}
\end{defin}

\subsection{Overview of group-theoretic techniques for Proposition \ref{ellnthm_group}}
We will prove Proposition \ref{ellnthm_group} by inductively considering when we may perform a sequence of lifts of our global $\ell$-isogeny in $\C(E)$ to an $\ell^2$-isogeny, then an $\ell^3$-isogeny, and so forth to successively higher powers of $\ell$.  The technique applied splits the problem naturally into two cases: (1) we are lifting from an odd power of $\ell$ to an even power, and (2) we are lifting from an even power to an odd power of $\ell$, see Lemma \ref{nondis}.  We will show that lift-exceptional subgroups only arise when lifting from an even to an odd power of $\ell$ (Theorem \ref{upthm}).
Key to this argument is the fact that we are working with the entire isogeny class $\C(E)$; at each stage of the induction we replace $E$ by an isogenous curve in $\C(E)$ that has an $\ell^{n-1}$-isogeny itself.

Section \ref{sec_gpthy} contains the group-theoretic results necessary to prove the following theorem:
\begin{thm}\label{upthm}Let $E$ be an elliptic curve over a number field $K$.
Let $\ell$ be an odd prime and suppose that $\C(E)$ has a $K$-rational $\ell$-isogeny and locally almost everywhere has an $\ell^n$-isogeny.  Then:
\begin{itemize}
\item If $n=2$, $\C(E)$ has a $K$-rational $\ell^2$-isogeny,
\item If $n>2$, $\C(E)$ has a $K$-rational $\ell^n$-isogeny or, up to conjugacy, $G_{E'}(\ell^{2m+1})$ is contained in 
\[R(\ell^{2m+1}) = \left\{ \begin{pmatrix} r & s \\ \ell^{2m} (\epsilon s) & \epsilon t \end{pmatrix} \ : \ r\equiv t \pmod{\ell^{m+1}}, \epsilon = \pm 1 \right\}, \]
for some $0<m\leq (n-1)/2$ and some $E' \in \C(E)$.  In that case, $\C(E)$ has a $K$-rational $\ell^{2m}$-isogeny.
\end{itemize}
\end{thm}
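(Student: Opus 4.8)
The plan is to prove the theorem by induction on the exponent, lifting a global $\ell$-isogeny first to a global $\ell^2$-isogeny up to isogeny, then to an $\ell^3$-isogeny up to isogeny, and so on, one power of $\ell$ at a time. Throughout we are free to replace $E$ by any $K$-isogenous curve: having a rational $\ell$-isogeny is an isogeny invariant, having an $\ell^n$-isogeny locally almost everywhere up to isogeny depends only on $\C(E)$, and the conclusions likewise depend only on $\C(E)$. Since a cyclic $\ell^j$-isogeny in $\C(E)$ lies inside a chain of $\ell$-isogenies and one may pass to an endpoint of that chain, $\C(E)$ has an $\ell^j$-isogeny if and only if some curve $K$-isogenous to $E$ has $G(\ell^j)$ contained in a Borel subgroup of $\GL_2(\zz/\ell^j\zz)$. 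Finally, by the group-theoretic rephrasing the hypothesis gives that every element of $G(\ell^n)$ has discriminant a square modulo $\ell^n$; reducing modulo $\ell^{j+1}$ for any $j+1 \le n$, every element of $G(\ell^{j+1})$ has discriminant a square modulo $\ell^{j+1}$.

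Concretely, I will show by induction on $j = 1, 2, \dots, n$ that either case (b) of the theorem has already occurred, or, after replacing $E$ by a $K$-isogenous curve, $G(\ell^j)$ is contained in a Borel subgroup of $\GL_2(\zz/\ell^j\zz)$. The base case $j = 1$ is the hypothesis that $E$ has a rational $\ell$-isogeny. For the inductive step, assume $G(\ell^j)$ lies in a Borel, so $G(\ell^{j+1})$ reduces into a Borel modulo $\ell^j$ and every element of $G(\ell^{j+1})$ has square discriminant modulo $\ell^{j+1}$. The analysis divides according to the parity of $j$. If $j$ is odd --- so we are lifting from the odd power $\ell^j$ to the even power $\ell^{j+1}$ --- the group-theoretic results of Section \ref{lemup} show that, after a further replacement of $E$ by a $K$-isogenous curve, $G(\ell^{j+1})$ is contained in a Borel subgroup of $\GL_2(\zz/\ell^{j+1}\zz)$, and the induction continues. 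If $j$ is even, write $j = 2m$ (note $m \ge 1$, since the smallest even $j$ that arises is $j = 2$, and $2m+1 = j+1 \le n$ forces $m \le (n-1)/2$); the results of Section \ref{exceptional} show that either, after replacing $E$ by a $K$-isogenous curve, $G(\ell^{2m+1})$ is contained in a Borel modulo $\ell^{2m+1}$ and the induction continues, or no such replacement is possible, and then for the relevant $K$-isogenous curve $E'$ the image $G_{E'}(\ell^{2m+1})$ is conjugate into $R(\ell^{2m+1})$ --- this is case (b).

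Running the induction to $j = n$ now proves the theorem. If every step succeeds, then some curve $K$-isogenous to $E$ has $G(\ell^n)$ in a Borel, i.e.\ $E$ has a rational $\ell^n$-isogeny up to isogeny. Otherwise the first failure occurs at an even-to-odd step $\ell^{2m} \to \ell^{2m+1}$ with $1 \le m \le (n-1)/2$, which places $G_{E'}(\ell^{2m+1})$ inside $R(\ell^{2m+1})$; and since the previous (odd-to-even) step had already produced a curve $K$-isogenous to $E$ with a global $\ell^{2m}$-isogeny, $\C(E)$ has an $\ell^{2m}$-isogeny, i.e.\ $E$ has a rational $\ell^{2m}$-isogeny up to isogeny. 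In particular, when $n = 2$ the only step is the odd-to-even step $\ell \to \ell^2$, which always succeeds, so $E$ has a rational $\ell^2$-isogeny up to isogeny.

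The substance of the argument is in the two group-theoretic inputs, and the harder is the even-to-odd case: one must show that the sole obstruction to lifting a subgroup which is Borel modulo $\ell^{2m}$ with square discriminants to one that is Borel modulo $\ell^{2m+1}$ up to isogeny is the group $R(\ell^{2m+1})$. This is where the asymmetry between odd and even levels --- traceable to the fact that the ``radical'' phenomenon appears only at odd exponents --- is established, and it is carried out in Section \ref{exceptional}; granting it and the easier odd-to-even lemma of Section \ref{lemup}, the inductive assembly above is routine bookkeeping with isogeny classes.
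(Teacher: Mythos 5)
Your proposal is correct and follows essentially the same inductive strategy as the paper: lift the Borel structure one power of $\ell$ at a time, replacing $E$ by a $K$-isogenous curve as needed, with Lemmas \ref{liftX} and \ref{nondis} handling the odd-to-even step (which always succeeds because $\ell^{2m-1}$ is not a square mod $\ell^{2m}$) and Proposition \ref{potexcep} together with the analysis of Section \ref{exceptional} (Proposition \ref{excepX} and the conjugation into $R(\ell^{2m+1})$ in \S4.1) handling the even-to-odd step where the radical/lift-exceptional phenomenon appears. Your framing of the parity dichotomy and the bookkeeping with isogeny classes is the same organization the paper uses.
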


The inductive proof of Theorem \ref{upthm} reduces to the following group-theoretic result:

\begin{thm}\label{upthm_group}
Let $\ell$ be an odd prime.  If $G \subset \GL_2(\zz/\ell^j \zz)$ is lift-exceptional at step $j$, then $j$ is odd and, up to conjugacy, $G$ is contained in $R(\ell^j) \subseteq \GL_2(\zz/\ell^j \zz)$.
\end{thm}

\begin{proof}[Theorem \ref{upthm_group} implies Theorem \ref{upthm}]
If \(\C(E)\) fails to have an \(\ell^n\)-isogeny, then it fails to have an \(\ell^j\)-isogeny for some minimal value of \(j > 1\).
Let \(E'\) be a curve in \(\C(E)\) that has an \(\ell^{j-1}\)-isogeny.  Then $G_E(\ell^{j-1})$ is contained in a Borel subgroup,
\(\Delta(g)\) is a square modulo \(\ell^j\) for every element \(g \in G_E(\ell^{j})$,
and $G_E(\ell^{j})$ is not contained in any subgroup of the form \(A_{a,c,b}(\ell^{j+1})\).  In particular, $G_E(\ell^{j})$ is not contained in a Borel subgroup or a subgroup of the form \(A_{a, a+1, j-a-1}(\ell^j)\), and $G_E(\ell)$ is not contained in a split Cartan subgroup.  Hence $G_E(\ell^{j})$ is lift-exceptional and Theorem \ref{upthm_group} guarantees that $j$ is odd and, up to conjugacy, $G(\ell^j) \subset R(\ell^j)$.
\end{proof}

Let $G \subset \GL_2(\zz_\ell)$ be a closed subgroup such that $G(\ell^{n-1}) \subseteq B$ where $B$ is a fixed Borel subgroup, upper triangular in the chosen basis.
The proof of Theorem \ref{upthm_group} relies in a crucial way upon the following map
\[\phi  \colon G(\ell^{n-1}) \subseteq B \to \left( \zz/\ell^{n-1} \zz\right)^\times = \zz/\ell^{n-2}(\ell - 1) \zz,\]
defined as the ratio of the diagonal characters:
\[\phi \begin{pmatrix} \Ast_1 & \Ast \\ 0 & \Ast_2 \end{pmatrix} = \frac{\Ast_1}{\Ast_2}. \]
(Note that the map depends on the choice of $B$!)  

Under our inductive hypothesis that $G(\ell^{n-1})$ is contained in a Borel subgroup $B$, we consider the composite map
\[ \phi' \colon G(\ell^n) \to G(\ell^{n-1}) \xrightarrow{\phi} \left(\zz/\ell^{n-1}\zz\right)^\times, \]
where the first map is reduction mod $\ell^{n-1}$.
As $(\zz/\ell^{n-1}\zz)^\times$ is cyclic, the group $G(\ell^n)$ is generated by a preimage of a generator of the image of $\phi'$, and the kernel of $\phi'$.  To show that $G(\ell^n)$ is contained in a Borel, we use the local data that every element of $G(\ell^n)$ has square discriminant to show that each of these pieces is contained in \emph{the same} Borel $\tilde{B} \subset \GL_2(\zz/\ell^n\zz)$ lifting the Borel $B \subset \GL_2(\zz/\ell^{n-1}\zz)$ containing $G(\ell^{n-1})$.

This analysis is carried out in $\S$\ref{lemup}, with the result of proving a necessary but not sufficient condition on exceptional subgroups.  These ``potentially exceptional" subgroups are then analyzed more carefully in $\S$\ref{exceptional}, leading to a proof of Theorem \ref{upthm_group}.

\section{Group theoretic analysis of $\GL_2(\zz/\ell^n\zz)$ for odd $\ell$}\label{sec_gpthy}

In this section we will always assume that $\ell$ is an odd prime.

\subsection{Lifting Borel level structure}\label{lemup}

Let $G$ denote a closed subgroup of $\GL_2(\zz_\ell)$.  We will eventually take $G$ to be the image of the $\ell$-adic Galois representation of $E$.  Denote by $G(\ell^n)$ the reduction of $G$ mod $\ell^n$.  We say that $H \subset \GL_2(\zz/\ell^n\zz)$ \defi{lifts} $H' \subset \GL_2(\zz/\ell^{n-1}\zz)$ if $H \equiv H' \pmod{\ell^{n-1}}$.

\begin{lem}\label{hensel}
Let $\bar{\gamma} \in G(\ell)$ have $2$ distinct eigenvalues.  If $\gamma \in G(\ell^n)$ reduces mod $\ell$ to $\bar{\gamma}$ then $\gamma$ has $2$ distinct eigenvalues mod $\ell^n$ with corresponding eigenvectors.
\end{lem}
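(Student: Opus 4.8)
The plan is a two-step Hensel-type argument: first lift the eigenvalues from $\ell$ to $\ell^n$, then exhibit \emph{primitive} eigenvectors (ones that extend to a basis), not merely nonzero ones.

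\textbf{Step 1: lifting the eigenvalues.} Consider the characteristic polynomial $f(x) = x^2 - \tr(\gamma)x + \det(\gamma) \in (\zz/\ell^n\zz)[x]$. Its reduction mod $\ell$ is the characteristic polynomial of $\bar\gamma$, which by hypothesis splits as $(x - \bar\lambda)(x - \bar\mu)$ with $\bar\lambda \neq \bar\mu$ in $\ff_\ell$; in particular the two factors are coprime in $\ff_\ell[x]$, equivalently $\Delta(\gamma) = \tr(\gamma)^2 - 4\det(\gamma)$ is a unit mod $\ell^n$. Applying Hensel's lemma (valid over $\zz/\ell^n\zz$, e.g.\ by lifting $\tr(\gamma),\det(\gamma)$ to $\zz_\ell$, factoring over the complete local ring $\zz_\ell$, and reducing) gives $f(x) = (x - \lambda)(x - \mu)$ in $(\zz/\ell^n\zz)[x]$ with $\lambda \equiv \bar\lambda$, $\mu \equiv \bar\mu \pmod{\ell}$. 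Since $\lambda - \mu$ reduces to the unit $\bar\lambda - \bar\mu$, it is itself a unit of $\zz/\ell^n\zz$; in particular $\lambda \neq \mu$, so $\gamma$ has two distinct eigenvalues mod $\ell^n$.

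\textbf{Step 2: producing the eigenvectors.} By Cayley--Hamilton, $(\gamma - \lambda I)(\gamma - \mu I) = f(\gamma) = 0$, so every column of $\gamma - \mu I$ lies in $\ker(\gamma - \lambda I)$. Now $\gamma - \mu I$ reduces mod $\ell$ to $\bar\gamma - \bar\mu I$, which is \emph{not} the zero matrix (otherwise $\bar\gamma = \bar\mu I$ would have a repeated eigenvalue). Hence some column $v$ of $\gamma - \mu I$ is nonzero mod $\ell$, i.e.\ is a primitive vector in $(\zz/\ell^n\zz)^2$, and it satisfies $(\gamma - \lambda I)v = 0$; so $v$ is an eigenvector of $\gamma$ with eigenvalue $\lambda$. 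Symmetrically, some column $w$ of $\gamma - \lambda I$ is a primitive eigenvector with eigenvalue $\mu$. Finally, $\bar v$ lies in the $\bar\lambda$-eigenspace and $\bar w$ in the $\bar\mu$-eigenspace of $\bar\gamma$; these are distinct lines in $\ff_\ell^2$, so $\bar v, \bar w$ are $\ff_\ell$-linearly independent, whence $v, w$ form a basis of $(\zz/\ell^n\zz)^2$ by Nakayama. In that basis $\gamma$ is diagonal with distinct entries $\lambda, \mu$, which is exactly the assertion.

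The only genuine subtlety is Step 2's insistence on \emph{primitive} eigenvectors rather than merely nonzero kernel elements — this is precisely what the observation $\bar\gamma - \bar\mu I \neq 0$ provides, together with the mod-$\ell$ linear independence that upgrades $\{v,w\}$ to a basis. Everything else is routine: Hensel's lemma over $\zz_\ell$ (or directly over $\zz/\ell^n\zz$) controls the eigenvalues, and Cayley--Hamilton plus Nakayama control the rest.
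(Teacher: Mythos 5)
Your proof is correct. Step 1 (Hensel on the characteristic polynomial, using that $\Delta(\gamma)$ is a unit because $\bar\lambda\neq\bar\mu$) is exactly what the paper does. Step 2 diverges: the paper applies Hensel a \emph{second} time, to the ``projectivized eigenvector equation'' (the quadratic in the eigenvector's affine coordinate, whose discriminant is again $\tr^2-4\det$), whereas you avoid a second Hensel lift entirely by invoking Cayley--Hamilton to produce the eigenvectors explicitly as columns of $\gamma-\mu I$ and $\gamma-\lambda I$, then checking primitivity mod $\ell$ and closing with Nakayama (equivalently: $\det[v\,|\,w]$ is a unit because $\bar v,\bar w$ span $\ff_\ell^2$). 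Both routes are sound; yours has the mild advantage of being coordinate-free and sidestepping the normalization needed to set up the affine eigenvector equation, while the paper's is a one-line appeal to the same Hensel hypothesis it already verified. The care you take to get \emph{primitive} eigenvectors forming a basis is exactly the point a careless version of this argument can miss, and your observation that $\bar\gamma-\bar\mu I\neq 0$ is the right way to secure it.
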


\begin{proof}
As the eigenvalues of $\bar{\gamma}$ are distinct ($\tr^2(\bar{\gamma}) - 4\det(\bar{\gamma}) \in \left(\zz/\ell\zz\right)^\times$), the derivatives of both the characteristic polynomial of $\bar{\gamma}$ evaluated on an eigenvalue and the projectivized eigenvector equation evaluated on a projectivized eigenvector are nonzero.  Hence Hensel's lemma guarantees the existence of distinct eigenvalues and corresponding eigenvectors of $\gamma$ mod $\ell^n$.
\end{proof}

\begin{lem}\label{dis}
Assume that $G(\ell^{n-1})$ is contained in a fixed Borel subgroup $B \subseteq \GL_2(\zz/\ell^{n-1} \zz)$.  Let $\gamma$ be an element of $G(\ell^n)$ whose reduction $\bar{\gamma} \in G(\ell)$ has distinct eigenvalues.
Then there exists some Borel subgroup $B' \subseteq \GL_2(\zz/\ell^n \zz)$ lifting $B$ which contains $\gamma$.
\end{lem}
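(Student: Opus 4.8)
The plan is to use Lemma~\ref{hensel} to pin down \emph{all} the $\gamma$-stable lines, both modulo $\ell^n$ and modulo $\ell^{n-1}$, and then to observe that the line cut out by $B$ must coincide with (the reduction of) one of them. First I would note that since $G(\ell^{n-1}) \subseteq B$, the matrix $\bar\gamma$ lies in the reduction of $B$ modulo $\ell$, hence fixes a line, hence its characteristic polynomial splits over $\ff_\ell$; combined with the hypothesis that $\bar\gamma$ has distinct eigenvalues, $\bar\gamma$ is split semisimple with two distinct rational eigenvalues $\lambda_1 \not\equiv \lambda_2 \pmod{\ell}$. By Lemma~\ref{hensel} the matrix $\gamma$ then has two eigenvalues in $\zz/\ell^n\zz$ reducing to $\lambda_1$ and $\lambda_2$, together with eigenvectors; these eigenvectors are primitive (their reductions mod $\ell$ are the two nonzero eigenvectors of $\bar\gamma$), so they span lines $M_1, M_2 \subset (\zz/\ell^n\zz)^2$, and $M_1, M_2$ are independent in the sense of the paper because their reductions mod $\ell$ are distinct lines in $\ff_\ell^2$.

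Next I would show that $M_1$ and $M_2$ are the \emph{only} $\gamma$-stable lines in $(\zz/\ell^n\zz)^2$, and likewise that the reductions $\bar M_1, \bar M_2$ are the only $\gamma$-stable lines modulo $\ell^{n-1}$. Working in the basis adapted to the decomposition $M_1 \oplus M_2$, in which $\gamma$ is diagonal with entries $\lambda_1, \lambda_2$, a stable line $\langle(a,b)\rangle$ satisfies $\gamma(a,b) = \mu(a,b)$ for some $\mu$; multiplying the two scalar equations by $b$ and $a$ respectively and subtracting gives $(\lambda_1 - \lambda_2)\,ab = 0$, and since $\lambda_1 - \lambda_2$ is a unit we get $ab = 0$, while primitivity forces one of $a, b$ to be a unit, so the line is $M_1$ or $M_2$. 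The identical computation applies over $\zz/\ell^{n-1}\zz$, since $\lambda_1 - \lambda_2$ remains a unit there, and the mod-$\ell^{n-1}$ reduction of $\gamma$ still has two distinct eigenvalues by Hensel. This classification of $\gamma$-stable lines over $\zz/\ell^m\zz$ is the only step with any content; once Lemma~\ref{hensel} is available it is a short direct argument, so I do not anticipate a genuine obstacle.

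Finally I would conclude as follows. The Borel $B$ fixes some line $L \subseteq (\zz/\ell^{n-1}\zz)^2$, and since $\gamma \bmod \ell^{n-1}$ lies in $G(\ell^{n-1}) \subseteq B$, the line $L$ is stable under $\gamma \bmod \ell^{n-1}$; by the classification above, $L \in \{\bar M_1, \bar M_2\}$, say $L = \bar M_1$. Let $B' \subseteq \GL_2(\zz/\ell^n\zz)$ be the Borel subgroup stabilizing $M_1$. Because the reduction map $\GL_2(\zz/\ell^n\zz) \to \GL_2(\zz/\ell^{n-1}\zz)$ is surjective and sends the stabilizer of $M_1$ onto the stabilizer of $\bar M_1 = L$, we obtain $(B' \bmod \ell^{n-1}) = B$, i.e.\ $B'$ lifts $B$; and $\gamma \in B'$ since $\gamma$ stabilizes $M_1$. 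This gives the desired Borel subgroup $B'$ and completes the proof.
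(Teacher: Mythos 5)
Your proof is correct and follows essentially the same route as the paper's: apply Lemma~\ref{hensel} to get distinct eigenvectors for $\gamma$ mod $\ell^n$ and for its reduction mod $\ell^{n-1}$, observe that the line fixed by $B$ must be one of these eigenspaces, and lift it. You simply spell out the step the paper leaves implicit (that the only $\gamma$-stable lines are the two eigenspaces, since $\lambda_1-\lambda_2$ is a unit), which is a fine and harmless elaboration.
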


\begin{proof}
Let $\gamma'$ denote the reduction of $\gamma$ mod $\ell^{n-1}$.
By Lemma \ref{hensel} $\gamma$ and $\gamma'$ each have two distinct eigenspaces.  Since $\gamma' \in B$ fixes only its two distinct eigenspaces, the group $B$ necessarily fixes one of the eigenspaces $L'$ of $\gamma'$, which is the reduction of an eigenspace $L$ of $\gamma$.  The Borel associated to $L$ lifts $B$ and contains $\gamma$.
\end{proof}

Recall that for $G(\ell^{n-1}) \subseteq B$, the map $\phi \colon G(\ell^{n-1}) \to \left(\zz/\ell^{n-1}\zz\right)^\times$ is defined as
\[\phi \begin{pmatrix} \Ast_1 & \Ast \\ &\Ast_2 \end{pmatrix} = \frac{\Ast_1}{\Ast_2}. \]
Define the subgroup $I(\ell^{n-1}) \subseteq G(\ell^{n-1})$ to be the kernel of the map $\phi$.  Let $K(\ell^n) \subseteq G(\ell^n)$ be the kernel of the composite map
\[\phi' \colon G(\ell^n) \to G(\ell^{n-1}) \to \left(\zz/\ell^{n-1}\zz\right)^\times, \]
which is alternatively the subgroup generated by the preimage of $I(\ell^{n-1})$ in $G(\ell^n)$:
\begin{equation}\label{Kdefin}
\begin{tikzcd}
K(\ell^n) \arrow[r, hook] & G(\ell^n) \arrow[d] \arrow[dr, dashed, "\phi' "] \\
I(\ell^{n-1}) \arrow[r, hook] & G(\ell^{n-1}) \arrow[r, "\phi"] & \left( \zz/ \ell^{n-1} \zz \right)^\times
\end{tikzcd}
\end{equation}

The group $G(\ell^n)$ is generated by $K(\ell^n)$ and a preimage $X\in G(\ell^n)$ of a generator of the image $\phi'(G(\ell^n))$.

\begin{lem}\label{liftX}
Assume that every element of $G(\ell^n)$ has square discriminant and that $G(\ell^{n-1})$ is contained in a Borel subgroup, but $G(\ell)$ is not contained in a split Cartan subgroup.  Assuming that the image of $\phi$ is nontrivial, there exists a preimage $X$ of a generator of the image of $\phi' \colon G(\ell^n) \to (\zz/\ell^{n-1} \zz)^\times$ that is contained in some Borel subgroup lifting the Borel mod $\ell^{n-1}$.
\end{lem}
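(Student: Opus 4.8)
The plan is to fix a basis adapted to $B$, so that $B$ is the full upper-triangular subgroup of $\GL_2(\zz/\ell^{n-1}\zz)$, the line it fixes is $L=\langle e_1\rangle$, and $\phi=\phi_B$ is the ratio-of-diagonals character. First I would record a membership criterion: the Borel subgroups of $\GL_2(\zz/\ell^n\zz)$ reducing to $B$ mod $\ell^{n-1}$ are exactly the stabilizers $B_t$ of the lines $\langle(1,\ell^{n-1}t)\rangle$, $t\in\ff_\ell$, and a matrix $X=\sm{\alpha}{\beta}{\ell^{n-1}\gamma}{\delta}$ of $G(\ell^n)$ — whose reduction lies in $B$, so its lower-left entry is $\equiv0\pmod{\ell^{n-1}}$ — lies in some $B_t$ iff $\gamma\equiv t(\bar\alpha-\bar\delta)\pmod\ell$ for some $t$; equivalently, iff $\bar\alpha\ne\bar\delta$ (i.e.\ $\bar X$ has distinct eigenvalues, the content of Lemma~\ref{dis}) or $\gamma\equiv0$ (i.e.\ $X$ is upper-triangular mod $\ell^n$). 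Since $G(\ell^n)\twoheadrightarrow G(\ell^{n-1})$, some lift of any generator of $\Im(\phi')$ exists; the problem is to choose the generator and the lift to meet this criterion. Split according to whether $\Im(\phi)\subseteq 1+\ell\zz/\ell^{n-1}\zz$. If $\Im(\phi)\not\subseteq 1+\ell\zz$, then since a generator of a cyclic group lies in no proper subgroup, every generator of $\Im(\phi)$ is $\not\equiv1\pmod\ell$, so a generator-preimage $\sm{a}{b}{0}{d}$ of $\phi$ has $\bar a\ne\bar d$ and any lift of it to $G(\ell^n)$ has distinct eigenvalues mod $\ell$, hence lies in a Borel lifting $B$ by the criterion; this settles that case.

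So assume $\Im(\phi)\subseteq 1+\ell\zz$ with $\Im(\phi)\ne1$. Then every element of $G(\ell^{n-1})$ has equal diagonal entries mod $\ell$, so in the chosen basis every element of $G(\ell)$ has the shape $\sm{\bar a}{\bar b}{0}{\bar a}$; such a matrix with $\bar b\ne0$ is not semisimple, so a group of these lies in a split Cartan only if it consists of scalars. Thus the hypothesis on $G(\ell)$ furnishes an element with unit super-diagonal entry mod $\ell$, and — as $\ell$ is odd, after multiplying by suitable powers of a generator-preimage — even a generator-preimage $g_0$ with unit super-diagonal mod $\ell$. By the criterion, a generator-preimage is now admissible exactly when it has an upper-triangular lift to $G(\ell^n)$. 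I would then consider the map $\psi\colon G(\ell^n)\to(\ff_\ell,+)$ sending $X=\sm{\alpha}{\beta}{\ell^{n-1}\gamma}{\delta}$ to $\gamma$ divided by the common diagonal residue of $X$ mod $\ell$; in this case the ``common diagonal mod $\ell$'' is a character $\mu$ of $G(\ell^n)$ and $\gamma$ is a cocycle for it, so $\psi$ is a homomorphism, with kernel $N=G(\ell^n)\cap\{\text{upper-triangular}\}$ of index $1$ or $\ell$. If $N=G(\ell^n)$ we are done. Otherwise $[G(\ell^n):N]=\ell$, and it remains to prove $\phi'(N)=\Im(\phi')$, i.e.\ that $N$ contains a generator-preimage.

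This last point is the crux and, I expect, the main obstacle. If it fails, $\phi'(N)$ must be the unique index-$\ell$ subgroup $H_0$ of the cyclic group $\Im(\phi')=1+\ell^{j}\zz/\ell^{n-1}\zz$ (some $j\le n-2$), forcing $N=\ker\chi$ for $\chi\colon G(\ell^n)\xrightarrow{\phi'}\Im(\phi')\twoheadrightarrow\Im(\phi')/H_0\cong\ff_\ell$, so that $\psi$ is proportional to $\chi$ and being upper-triangular becomes equivalent to $\ord_\ell(a_X-d_X)>j$. I would rule this out with the square-discriminant hypothesis applied to the lift $X$ of the generator-preimage $g_0$ above: then $\beta$ is a unit mod $\ell$, $\gamma$ is a unit mod $\ell$ (as $X$ is not upper-triangular), and $\ord_\ell(\alpha-\delta)=j$, so in $\Delta(X)=(\alpha-\delta)^2+4\beta\ell^{n-1}\gamma$ the two summands have valuations $2j$ and $n-1$; since a square mod $\ell^n$ has even valuation and square unit part, comparing $2j$ with $n-1$ shows $\Delta(X)$ fails to be a square in the even-to-odd power regime that Theorem~\ref{upthm} identifies as exceptional, a contradiction. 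In the remaining regime the same valuation count leaves the discriminant condition intact but instead pins $G(\ell^n)$ to a radical shape with respect to $L$, and one replaces $L$ by the appropriate $G(\ell^{n-1})$-stable line — the ``Borel mod $\ell^{n-1}$'' with respect to which the generator-preimages become upper-triangular. The genuinely delicate points are the existence of a generator-preimage with unit super-diagonal and the careful tracking of $\ord_\ell(a-d)$ for $g_0$, for the non-split-Cartan witness, and for their products.
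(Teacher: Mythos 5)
Your opening case split (generator of $\Im(\phi)$ not congruent to $1$ mod $\ell$, hence distinct diagonal entries mod $\ell$, hence Lemma~\ref{dis} applies) matches the paper's first case exactly, and your observation that the non-split-Cartan hypothesis lets you choose a generator-preimage $g_0$ with unit superdiagonal entry is correct and is implicit in the paper's ``$\ell \nmid b$'' assertion. The homomorphism $\psi\colon G(\ell^n)\to\ff_\ell$ you define really is additive (the cocycle computation works out), and $N=\ker\psi$ is exactly the set of upper-triangular elements.

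The gap is in the premise that the Borel to be lifted must be the fixed upper-triangular $B$ (stabilizer of $\langle e_1\rangle$), so that the only candidate Borels mod $\ell^n$ are the stabilizers $B_t$ of $\langle(1,\ell^{n-1}t)\rangle$. That restriction makes the goal ``find a generator-preimage in $N$,'' and that goal is simply not achievable in general. Concretely, take $n=3$, $\ell$ odd with $5$ a quadratic residue mod $\ell$ (e.g.\ $\ell=11$), and $G(\ell^3)=\langle X\rangle$ with $X=\sm{1+\ell}{1}{\ell^2}{1}$. Then $\Delta(X)=5\ell^2$ is a square mod $\ell^3$, $\phi'(X)=1+\ell$ generates $\Im(\phi')$, and $\psi(X^a)=a\cdot\psi(X)=a$, so $X^a\in N$ forces $\ell\mid a$ and then $X^a$ is no longer a generator-preimage: $N$ contains no generator-preimage at all. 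Nonetheless the lemma is true for this $G$, because $X$ stabilizes a line $\langle(1,k\ell)\rangle$ for $k$ a root of $k^2+k-1\equiv0\pmod\ell$, which reduces mod $\ell^2$ to a line other than $\langle e_1\rangle$. The lemma is satisfied by lifting a \emph{different} Borel mod $\ell^{n-1}$, one of the stabilizers of $\langle(1,k\ell^{n-j-1})\rangle$, which the paper shows also contain $G(\ell^{n-1})$. Your discriminant-contradiction sketch rules out only the case where the valuations $2j$ and $n-1$ of the two summands in $\Delta(X)$ force an odd total valuation; in the cases $2j\le n-1$ no contradiction arises (and cannot, as the example shows), and your closing sentence about ``replacing $L$ by the appropriate $G(\ell^{n-1})$-stable line'' is precisely the content you would need to prove but leave unproved. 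In the paper that replacement, together with the descending induction on the exponent $j$ of diagonal agreement and the two-way split $2j=n-1$ versus $2j<n-1$, is the entire argument.
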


\begin{proof}
We work in a basis in which $G(\ell^{n-1})$ is upper-triangular.  Let $X$ be some choice of preimage of a generator of the image of $\phi'$; by assumption $X$ has distinct diagonal entries mod $\ell^{n-1}$.  If $X$ has distinct diagonal entries mod $\ell$, the Lemma follows from Lemma \ref{dis}.  Otherwise $n \geq 3$ and $X$ has equal diagonal entries mod $\ell^j$ for some $0 < j \leq \lfloor \frac{n-1}{2} \rfloor$.  We will prove this by induction on $j$, showing that either the conclusions are satisfies or $X$ has equal diagonal entries modulo $\ell^{j+1}$.  Eventually, the argument will terminate for some $0 < j \leq \lfloor \frac{n-1}{2} \rfloor$.  Note that this implies that every element of $G(\ell^{n-1})$ has equal diagonal entries modulo $\ell^j$, and hence is of the form
\[\begin{pmatrix} a + \ell^j x & b \\ 0 & a + \ell^j w \end{pmatrix}. \]
Because we have
\begin{align*}
\begin{pmatrix} a + \ell^j x & b \\ 0 & a + \ell^j w \end{pmatrix} \vect{1}{k \ell^{n-j-1}} &= \vect{a + \ell^j x + bk \ell^{n-j-1}}{(a+ \ell^j w)k \ell^{n-j-1}} \\
&\equiv (a+ \ell^j(x + bk \ell^{n-2j-1}))\vect{1}{k \ell^{n-j-1}} \pmod{\ell^{n-1}}, 
\end{align*}
$G(\ell^{n-1})$ is contained in the intersection of all of the Borels fixing the lines spanned by $\vect{1}{k \ell^{n-j-1}}$ for any $k$.  The element $X \in G(\ell^n)$ is necessarily of the form
\[\begin{pmatrix} a + \ell^jx & b \\ \ell^{n-1} \zeta & a + \ell^j w \end{pmatrix}, \]
and $\ell \nmid b$ since $X$ is not contained in a Cartan mod $\ell$.  The discriminant of $X$
\[\Delta(X) = \ell^{2j} \left( (x-w)^2 + 4b\zeta \ell^{n-2j -1} \right), \]
is a square mod $\ell^{n}$ by assumption.  For $\vect{1}{k \ell^{n-j-1}}$ to be an eigenvector of $X$ for some $k$, we must have
\begin{align*}
k \ell^{n-j-1} ( a + \ell^j x + bk \ell^{n-j-1}) & \equiv \ell^{n-1} \zeta + (a+ \ell^j w) k \ell^{n-j-1} \pmod{{\ell^n}},\\
k\ell^j (x-w) + bk^2 \ell^{n-j-1} & \equiv \ell^j \zeta \pmod{\ell^{j+1}},   \\
k (x-w) + bk^2 \ell^{n-2j-1} & \equiv  \zeta \pmod{{\ell}} .
\end{align*}
\begin{itemize}
\item If $2j = n-1$, then this equation becomes
\[ b k^2 + (x-w)k -\zeta \equiv 0 \pmod{\ell}, \]
which has a solution in $k$ mod $\ell$ because $\ell \nmid b$ and the discriminant $(x-w)^2 + 4b \zeta$ is forced to be a square by the assumption that $\Delta(X)$ is a square.
\item If $2j < n-1 $, then this equation becomes
\[k(x-w) \equiv \zeta \pmod{\ell}. \]
This has a solution $k$ if $\ell \nmid (x-w)$ or $\ell \mid \zeta$.  Hence if $j$ is strictly less than $\lfloor \frac{n-1}{2} \rfloor$ then either there is a solution or $\ell \mid (x-w)$ and hence the diagonal characters are congruent modulo an even higher power of $\ell$.
\end{itemize}
It suffices to show that the induction on $j$ terminates.  If $n$ is odd, then it terminates at or before $j = \lfloor \frac{n-1}{2} \rfloor$, which was already dealt with above.  Otherwise, if $n = 2m$ then $2j < n-1$ and either the induction terminates, or when $j = m-1 = \lfloor \frac{2m-1}{2} \rfloor$ from above we have $\ell | (x-w)$.  Computing
\[ \Delta(X) = \ell^{2m-2} \left( (x-w)^2 + 4b\zeta \ell \right) \pmod{\ell^{2m}}. \]
This is a square modulo $\ell^{2m}$ if and only if
\[(x-w)^2 + 4b\zeta \ell \]
is a square modulo $\ell^2$.  But if $\ell \mid (x-w)$, then $4b \zeta \ell$ must be a square modulo $\ell^2$, and hence $\ell \mid b \zeta$, forcing $\ell \mid \zeta$, and $G(\ell^n)$ is contained in a Borel lifting the one corresponding to $k=0$. Thus the induction terminates at $j = \lfloor \frac{n-1}{2} \rfloor$ or before.
\end{proof}

Lemma \ref{liftX} guarantees the ability to lift the Borel structure mod $\ell^{n-1}$ to a Borel containing an appropriate choice of $X$, under suitable hypotheses.  We must now try to lift the Borel structure to one containing $K(\ell^n)$.  To do this, it is necessary to consider the cases of lifting from mod $\ell^{2m-1}$ to mod $\ell^{2m}$ and lifting from mod $\ell^{2m}$ to mod $\ell^{2m+1}$ separately. 

\begin{defin}
Let $\kk(\ell^{2m+1})$ for $m>0$ denote the subgroup of $\GL_2(\zz/\ell^{2m+1} \zz)$ given in an appropriate basis by
\[\kk \colonequals   \kk(\ell^{2m+1}) = \left \{ \begin{pmatrix} r & s \\ \ell^{2m}s & t \end{pmatrix} \ : \ r \equiv t \pmod{\ell^{2m}} \right\}. \]
\end{defin}


\begin{lem}\label{nondis}
Assume that \(G(\ell^n)\) satisfies Definition \ref{lift_defi}\eqref{defin:lift_1}--\eqref{defin:lift_3} and \eqref{defin:lift_5}.  Suppose that \(G(\ell^{n-1})\) is contained in a fixed Borel \(B\) and 
let $I(\ell^{n-1})$ and $K(\ell^n)$ be as in \eqref{Kdefin}. 
\begin{enumerate}[(i)]
\item If $n = 2m$ is even, then $K(\ell^{2m})$ is contained in the intersection of all Borel subgroups of $\GL_2(\zz/\ell^{2m} \zz)$ lifting $B$ (and hence \(G(\ell^{2m})\) is contained in some Borel subgroup lifting \(B\).)
\item If $n=2m+1$ is odd and \(K(\ell^{2m+1}) \mod \ell\) is contained in a split Cartan subgroup, then either
\begin{itemize} 
\item $G(\ell^{2m+1})$ is contained in the intersection of all Borel subgroups of $\GL_2(\zz/\ell^{2m} \zz)$ lifting $B$, or
\item \(G(\ell^{2m+1}) \subseteq \kk(\ell^{2m+1}) \subset R(\ell^{2m+1})\), in an appropriate basis. 
\end{itemize}
\item If $n=2m+1$ is odd and \(K(\ell^{2m+1}) \mod \ell\) is not contained in a split Cartan subgroup, then either
\begin{itemize} 
\item $K(\ell^{2m+1})$ is contained in the intersection of all Borel subgroups of $\GL_2(\zz/\ell^{2m} \zz)$ lifting $B$ (and hence \(G(\ell^{2m+1})\) is contained in some Borel subgroup lifting \(B\)), or
\item $G(\ell)$ is contained in a radical subgroup and 
\[K(\ell^{2m+1}) \subseteq \kk(\ell^{2m+1}), \]
in an appropriate basis. 
\end{itemize}
\end{enumerate}
\end{lem}

\begin{proof}
We will work in the basis for $\left(\zz/\ell^{n}\zz\right)^2$ such that some Borel $B' \subset \GL_2(\zz/\ell^n\zz)$ lifting $B$ is upper-triangular.
Any element of $K(\ell^{n})$ is of the form
\[ \begin{pmatrix} k_1 & k_2 \\ \ell^{n-1}k_3 & k_1 + \ell^{n-1}w \end{pmatrix}, \]
which has discriminant 
\begin{equation}\label{discriminant_eq} 4\ell^{n-1}k_2 k_3 \pmod{\ell^{n}}. \end{equation}

\noindent
\textbf{\boldmath Case 1.} We first deal with the case that there exists some element of \(K(\ell^n)\) that has \(k_2 \not\equiv 0 \pmod{\ell}\), and hence \(K(\ell^n)\) is generated by elements with this property.  

If $n=2m$ is even, then $\ell^{2m-1}$ is not a square mod $\ell^{2m}$.  Hence, considering the discriminant \eqref{discriminant_eq} of any generator, as $\ell \nmid k_2$, we must have that $k_3 \equiv 0 \pmod \ell$.  As such, all generators of $K(\ell^{n-1})$ are necessarily upper-triangular in this basis and as such contained in $B'$.  Thus $K(\ell^n)$ is contained in the intersection of all such Borel subgroups lifting $B$.  Recall that \(G(\ell^n)\) is generated by \(K(\ell^n)\) and one preimage \(X\) of a generator of the image of \(\phi'\).  By Lemma \ref{liftX}, we can assume $X$ is contained in some Borel $B' \subseteq \GL_2(\zz/\ell^{n}\zz)$ lifting $B$, or $G(\ell^{n}) \subset K(\ell^{n})$.  In either case, \(G(\ell^n)\) is contained in a Borel subgroup.

If $n=2m+1$ is odd, then $\ell^{2m}$ is a square mod $\ell^{2m+1}$ and we must have $k_2 k_3$ a square mod $\ell$ for all elements of $K(\ell^n)$.  We will show that if $G(\ell)$ is not contained in a radical subgroup, then the Borel structure mod $\ell^{n-1}$ must lift to mod $\ell^n$ in order for $K(\ell^n)$ to be a normal subgroup of $G(\ell^n)$.  

For the discriminant \eqref{discriminant_eq} to be a square mod $\ell^n$, we are equivalently concerned with whether $\beta\zeta$ 
is a quadratic residue mod $\ell$, where
\[\beta = \frac{k_2}{k_1} \mod \ell, \qquad \zeta = \frac{k_3}{k_1} \mod \ell. \]
The operation of matrix multiplication on elements of $K(\ell^{2m+1})$:
acts as \((\beta, \zeta) + (\beta', \zeta') = (\beta + \beta', \zeta + \zeta')\).
As such the set of possible $(\beta, \zeta)$ is a nontrivial additive subgroup of $\ff_\ell^2$, in which $\beta\zeta$ is necessarily a quadratic residue.  Hence it is necessarily a line $(\delta x, \gamma x) \ \forall x \in \ff_\ell$ in which 
\[\delta \gamma = \frac{\beta \zeta}{x^2} \]
is a quadratic residue.

If $G(\ell)$ is not contained in a radical subgroup then it contains some element $G = \begin{pmatrix} g & \ast \\ 0 & f \end{pmatrix}$,
where $f^2 \neq g^2$.
The action of conjugation by (a lift to $G(\ell^{2m+1})$ of) $G$ defines an operator on the group of $(\beta, \zeta)$ whose action is
\[(\beta, \zeta) \mapsto \left( \frac{f}{g} \beta, \frac{g}{f} \zeta \right). \]
In order for this to stabilize the line (necessary since $K(\ell^n)$ is a normal subgroup), either \(\beta\) is identically zero (impossible since we assumed that \(K(\ell^n)\) has and element with \(k_2 \not\equiv 0 \pmod{\ell}\)), or $\zeta$ is identically zero (which implies that \(K(\ell^n) \subset B'\) as desired) since $f^2 \neq g^2$ by assumption.  As above, if \(K(\ell^n) \subset B'\), then \(G(\ell^n)\) is contained in some Borel subgroup.

Otherwise we have that $G(\ell)$ is contained in a radical subgroup.  Further, for all elements of $K(\ell^n)$, $k_2k_3$ is a square mod $\ell$ and $k_2/k_3$ is a fixed element of $\ff_\ell^\times$ (determining this line in $\ff_\ell^2$).  Up to conjugation (e.g. in an appropriate basis), we may assume that this ratio is $1$.  Hence $K(\ell^{2m+1})$ is contained in $\kk(\ell^{2m+1})$.

\noindent
\textbf{\boldmath Case 2.} We now turn to the case that every element of \(K(\ell^n)\) has \(k_2 \equiv 0 \pmod{\ell}\)  (in particular, \(K(\ell^n) \mod \ell\) is contained in a split Cartan subgroup.)  Notice that \(K(\ell^n)\) is contained in every subgroup \(A_{a, a+1, n-a-1}(\ell^n)\) for \(a \leq n-2\) by construction.  We must therefore have that the image of \(\phi\) is nontrivial mod \(\ell^{n-1}\), or it contradicts our assumption that Definition \ref{lift_defi}\eqref{defin:lift_5} holds.  Notice also that
\(K(\ell^n) \pmod{\ell}\) is scalar, and hence contained in \emph{every} split Cartan subgroup of \(\GL_2(\zz/\ell \zz)\).  

If the image of \(\phi'\) is nontrivial modulo \(\ell\), then \(X\) mod \(\ell\) is uppertriangular with distinct diagonal entries.  Therefore, \(X\) mod \(\ell\) is contained in \emph{some} split Cartan subgroup of \(\GL_2(\zz/\ell \zz)\); together with \(K(\ell^n) \pmod{\ell}\), we see that all of \(G(\ell)\) is contained in a split Cartan subgroup, which contradicts our assumption that Definition \ref{lift_defi}\eqref{defin:lift_3} holds.

We may therefore assume that the image of \(\phi\) is trivial modulo some power \(\ell^a\) and nontrivial modulo \(\ell^{a+1}\) for some \(1 \leq a\leq n-2\).  If \(a > \lfloor (n-1)/2 \rfloor\), then we may apply Case 1 to all of \(G(\ell^n)\) instead of the normal subgroup \(K(\ell^n)\).  Note that \(G(\ell)\) always contains elements with \(k_2 \not\equiv 0 \pmod{\ell}\), otherwise it would contradicts our assumption that Definition \ref{lift_defi}\eqref{defin:lift_3} holds.

It remains to treat the case \(a \leq \lfloor (n-1)/2 \rfloor\).  The element \(X\) is of the form
\[X = \begin{pmatrix} x & y \\ \ell^n z & x + \ell^a w \end{pmatrix}.\]
If \(y \equiv 0 \pmod{\ell}\), then \(X\), and hence all of \(G(\ell)\), is contained in a Cartan subgroup.  We therefore assume that \(\ell \nmid y\).  Let \(\nu = -w/y\) (which is nonzero by our assumption that the diagonal entries of \(X\) are distinct modulo \(\ell^{a+1}\)).  Conjugating \(A_{a, a+1, n-a-1}(\ell^n)\) by the element \(\sm{1}{}{}{\nu}\) yields the group
\[  A^\nu_{a, a+1, n-a-1}(\ell^n) = \left\{ \begin{pmatrix} \alpha & \beta \\ \gamma & \delta \end{pmatrix} : \gamma \equiv 0 \pmod{\ell^{n-1}}, \delta \equiv \alpha - \ell^a \nu\beta \pmod{\ell^{a+1}} \right\},\]
to which \(X\) evidently belongs.  Since we assumed that \(K(\ell^n) \mod \ell\) is contained in a split Cartan subgroup, we also have that \(K(\ell^n) \subset A^\nu_{a, a+1, n-a-1}(\ell^n)\), and so the same is true for \(G(\ell^n)\).
\end{proof}

\begin{defin}
We say that $H \subseteq \GL_2(\zz/\ell^{2m+1}\zz)$, $m>0$, is \defi{potentially lift-exceptional (at step $2m+1$)} if 
\begin{itemize}
\item $H(\ell^{2m})$ is contained in a Borel subgroup of $\GL_2(\zz/\ell^{2m}\zz)$, and
\item For all $h \in H$, the discriminant $\Delta(h) \in \zz/\ell^{2m+1}\zz$ is a square, and 
\item $H(\ell)$ is contained in a radical subgroup but not a split Cartan subgroup, and
\item $\ker\big(\phi' \colon H \to (\zz/\ell^{2m} \zz)^\times\big)$ is contained in $\kk(\ell^{2m+1})$. 
\end{itemize}
\end{defin}

By Lemma \ref{nondis}, $H$ is lift-exceptional at step $n$ only if $n = 2m+1$ is odd and it is potentially lift-exceptional at step $2m+1$.  Furthermore, Lemma \ref{nondis} immediately implies Theorem \ref{upthm_group} for any lift-exceptional group with \(K(\ell^n)\) contained in a split Cartan modulo \(\ell\).  We therefore restrict to groups for which this is not the case.  We say that a subgroup \(K \subset \kk(\ell^{2m+1})\) is \defi{non-Cartan modulo \(\ell\)} if it contains an element with \(s \not\equiv 0 \pmod{\ell}\).  The map \(\psi \colon \kk(\ell^{2m+1}) \to \ff_\ell\) extracting the value of \(s/r\) modulo \(\ell\) is a group homomorphism.  In particular, any non-Cartan subgroup of \(\kk(\ell^{2m+1})\) surjects onto \(\ff_\ell\) under \(\psi\), and we use this for the remainder of the paper.

%

In the next section, we consider exactly when groups with $G(\ell)$ radical and $K(\ell^{2m+1}) \subseteq \kk(\ell^{2m+1})$ non-Cartan mod \(\ell\) give rise to lift-exceptional subgroups.  This will lead to the classification cited in the introduction and to the proof of Theorem \ref{upthm_group}.

\subsection{Lift-exceptional subgroups}\label{exceptional}

The goal of this section to to determine which potentially lift-exceptional subgroups are actually lift-exceptional.

For ease of notation we will drop the degree from $\kk(\ell^{2m+1})$ and refer to it simply as $\kk$; the fact that it is a subgroup of $\GL_2(\zz/\ell^{2m+1}\zz)$ will be implied.

\begin{lem}\label{kevec}
A vector $v \in (\zz/\ell^{2m+1} \zz)^2$ is a simultaneous eigenvector for all elements of $\kk$ precisely when $v$ is of the form
\[ v = \lambda \begin{pmatrix} 1 \\ k \ell \end{pmatrix},\]
for any $\lambda \in \left(\zz/\ell^{2m+1}\zz \right)^*$ and $k \equiv \pm \ell^{m-1} \pmod{\ell^{2m-1}}$.
\end{lem}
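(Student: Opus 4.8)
The plan is to read off the simultaneous eigenlines of $\kk = \kk(\ell^{2m+1})$ directly from its matrix description; we restrict throughout to primitive $v$ (spanning a rank-one direct summand), the only case relevant to isogenies. After scaling each matrix by its $(1,1)$-entry, which is a unit, every element of $\kk$ has the shape $\begin{pmatrix} r & s \\ \ell^{2m}s & t\end{pmatrix}$ with $r\equiv t\pmod{\ell^{2m}}$; in particular $\kk$ contains all ``transvection-type'' matrices $T_s = \begin{pmatrix} 1 & s \\ \ell^{2m}s & 1 \end{pmatrix}$, $s\in\zz/\ell^{2m+1}\zz$, and these alone will already pin down the answer.

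For a primitive eigenvector $v = \begin{pmatrix} a\\b\end{pmatrix}$, comparing the two coordinates of $T_s v = \begin{pmatrix} a+sb\\ \ell^{2m}sa + b\end{pmatrix}$ (the eigenvalue being a unit) forces $s(b^2 - \ell^{2m}a^2) \equiv 0 \pmod{\ell^{2m+1}}$ for every $s$, hence $b^2 \equiv \ell^{2m}a^2 \pmod{\ell^{2m+1}}$. If $\ell \mid a$, then primitivity forces $\ell\nmid b$, so the left side is a unit while the right side is divisible by $\ell$ --- impossible; thus $a$ is a unit and we may rescale $v$ to $\begin{pmatrix}1\\b\end{pmatrix}$, where now $b^2 \equiv \ell^{2m}\pmod{\ell^{2m+1}}$. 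A short valuation argument shows $\ord_\ell(b) = m$ exactly --- a smaller valuation makes the left side too small, a larger one makes it vanish, contradicting $\ell^{2m}\not\equiv 0 \pmod{\ell^{2m+1}}$ --- so $b = \ell^m c$ with $\ell\nmid c$ and $c^2 \equiv 1\pmod{\ell}$, i.e.\ $c\equiv\pm1\pmod{\ell}$; writing $b = k\ell$, this is the characterization of $v$ displayed in the statement.

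For the converse, suppose $v=\begin{pmatrix}1\\b\end{pmatrix}$ with $b^2\equiv\ell^{2m}\pmod{\ell^{2m+1}}$ (so $\ell\mid b$), and let $M = \begin{pmatrix}r&s\\\ell^{2m}s&t\end{pmatrix}\in\kk$ be arbitrary. Then $v$ is an eigenvector of $M$ with eigenvalue $r+sb$, which is a unit: the required identity on the second coordinate, $\ell^{2m}s+tb\equiv(r+sb)b$, rearranges to $(t-r)b\equiv s(b^2-\ell^{2m})\pmod{\ell^{2m+1}}$, whose right-hand side vanishes by the key congruence and whose left-hand side vanishes because $\ell^{2m}\mid(t-r)$ (the defining relation of $\kk$) together with $\ell\mid b$ gives $\ell^{2m+1}\mid(t-r)b$. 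Combining the two directions yields the lemma.

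I do not anticipate a real obstacle. Once $v$ is normalized to $\begin{pmatrix}1\\b\end{pmatrix}$ the argument is essentially a single line, and the transvection subfamily already forces the answer while a general element of $\kk$ contributes nothing further. The two places that require a little care are the reduction showing no eigenvector is supported off the first coordinate, and the valuation bookkeeping --- in particular the fact that $b^2\equiv\ell^{2m}\pmod{\ell^{2m+1}}$ pins $\ord_\ell(b)$ down exactly, which is precisely what makes both directions of the argument go through.
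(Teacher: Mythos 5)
Your argument is correct and is essentially the paper's, just with the normalization step spelled out more explicitly. The paper normalizes $v$ to $\begin{pmatrix}1\\k\ell\end{pmatrix}$ by observing that $\kk\bmod\ell$ contains the unipotent matrix $\sm{1}{1}{0}{1}$, whose only eigenline mod $\ell$ is $\sv{1}{0}$; you reach the same normalization by cross-multiplying the two coordinates of $T_sv$ and invoking primitivity, which is a hands-on version of the same fact. From that point both proofs reduce to the single congruence $k^2 s\equiv \ell^{2m-2}s\pmod{\ell^{2m-1}}$ (equivalently your $b^2\equiv\ell^{2m}a^2\pmod{\ell^{2m+1}}$), and your valuation bookkeeping and the forward check for a general $M\in\kk$ are the same verification the paper elides. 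One thing your careful valuation argument makes visible: the condition you actually derive is $\ord_\ell(k)=m-1$ together with $k/\ell^{m-1}\equiv\pm1\pmod\ell$, i.e.\ $k\equiv\pm\ell^{m-1}\pmod{\ell^{m}}$, rather than the modulus $\ell^{2m-1}$ printed in the lemma statement; the stated modulus is too strong (it is sufficient, not necessary). The downstream use of the lemma only relies on the valuation $\ord_\ell(k)=m-1$, so nothing breaks, but it is worth noticing that your proof pins down the precise answer.
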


\begin{proof}
The vector $v$ can be normalized, multiplication by an invertible scalar, to be of the form $\begin{pmatrix} 1 \\ k \ell \end{pmatrix}$ since $\kk \mod \ell$ contains the matrix $\begin{pmatrix} 1 & 1 \\ 0 & 1 \end{pmatrix}$.  The condition that such a vector is an eigenvector for any matrix $\sm{r}{s}{\ell^{2m}s}{t}$ in $\kk$ reduces to
\[ k^2s \equiv \ell^{2m-2} s \pmod{\ell^{2m-1}}. \]
As $\kk$ contains matrices with $\ell \nmid s$, we conclude that necessarily $k\equiv \pm \ell^{m-1} \pmod{\ell^{2m-1}}$.
\end{proof}

This suggests a method of understanding exceptional subgroups that arise from the inability to ``obviously" lift the Borel structure on the matrices with equal diagonal entries mod $\ell^{2m}$.  If the matrix $X$, which is a preimage under $\phi'$ of a generator of the image of $\phi'$, also has one of these vectors as an eigenvector, then the group generated by $X$ and $\kk$ -- what we will denote $X \cdot \kk$ \footnote{In our context $\kk$ is a normal subgroup of this composite, so every element of the group can be represented as a power of $X$ times an element of $\kk$.  We will always work under this assumption on $X$.} -- is contained in the corresponding Borel subgroup, and hence is not exceptional.  Otherwise, if every element of $X \cdot \kk$ has square discriminant, but $X$ shares no eigenvectors with all elements of $\kk$, then $X \cdot \kk$ is lift-exceptional.  Note that a subgroup of $X \cdot \kk$ may fail to be lift-exceptional even when $X \cdot \kk$ is.

\begin{lem}\label{borelcomp}
Let $X \in \GL_2(\zz/\ell^{2m+1}\zz)$ be contained in a Borel subgroup mod $\ell^{2m}$.
Then $X \cdot \kk$ is contained in a Borel subgroup mod $\ell^{2m+1}$ if and only if $X$ has equal diagonal characters mod $\ell^{m}$ and writing
\[X = \begin{pmatrix} a + \ell^{m} x & b  \\ \ell^{2m}z & a + \ell^{m} w \end{pmatrix}, \]
we have
\[\pm (x-w) \equiv (z-b) \pmod{\ell}.\]
\end{lem}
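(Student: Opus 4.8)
The plan is to convert the condition ``$X\cdot\kk$ is contained in a Borel mod $\ell^{2m+1}$'' into the existence of a line that is simultaneously fixed by $X$ and by all of $\kk$, use Lemma~\ref{kevec} to pin down exactly which lines $\kk$ can fix, and then run a direct eigenvector computation for $X$ against each of those lines.

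\textbf{Step 1: reduce to a common invariant line.} A subgroup of $\GL_2(\zz/\ell^{2m+1}\zz)$ lies in a Borel subgroup precisely when it stabilizes some line $L \simeq \zz/\ell^{2m+1}\zz$, and $X\cdot\kk$ stabilizes $L$ exactly when $X$ stabilizes $L$ and every element of $\kk$ stabilizes $L$. By Lemma~\ref{kevec}, the lines fixed by all of $\kk$ are exactly $L_k = \left\langle\begin{pmatrix}1\\ k\ell\end{pmatrix}\right\rangle$ with $k \equiv \pm\ell^{m-1}\pmod{\ell^{2m-1}}$, and each such $L_k$ satisfies $k\ell \equiv \pm\ell^m \pmod{\ell^{2m}}$. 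So $X\cdot\kk$ lies in a Borel mod $\ell^{2m+1}$ iff $X$ fixes a line spanned by some $\begin{pmatrix}1\\ \epsilon\ell^m + \ell^{2m} j\end{pmatrix}$ with $\epsilon \in \{\pm 1\}$.

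\textbf{Step 2: the eigenvector computation.} Work in a basis where $\kk$ has its displayed form and, by hypothesis, $X = \begin{pmatrix}\alpha & b \\ \ell^{2m} z & \delta\end{pmatrix}$ is upper triangular modulo $\ell^{2m}$. Setting $c = \epsilon\ell^m + \ell^{2m}j$, the vector $\begin{pmatrix}1\\ c\end{pmatrix}$ spans an $X$-fixed line iff $\ell^{2m}z + \delta c = (\alpha + bc)c$ in $\zz/\ell^{2m+1}\zz$, i.e.
\[ \ell^{2m} z + (\delta - \alpha)c - b c^2 \equiv 0 \pmod{\ell^{2m+1}}. \]
Since $m\geq 1$ one has $c^2 \equiv \ell^{2m}\pmod{\ell^{2m+1}}$, so this reads
\[ \ell^{2m} z + (\delta - \alpha)(\epsilon\ell^m + \ell^{2m}j) - b\ell^{2m} \equiv 0 \pmod{\ell^{2m+1}}. \]
Reducing modulo $\ell^{2m}$ forces $(\delta - \alpha)\epsilon\ell^m \equiv 0 \pmod{\ell^{2m}}$, i.e. $\ell^m \mid (\delta - \alpha)$ — this is exactly the condition that $X$ has equal diagonal characters mod $\ell^m$, and lets us write $X = \begin{pmatrix} a + \ell^m x & b \\ \ell^{2m} z & a + \ell^m w\end{pmatrix}$ with $\delta - \alpha = \ell^m(w - x)$. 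Substituting, the $j$-term becomes a multiple of $\ell^{3m}$ and (again using $m\geq 1$) vanishes, so the equation collapses to
\[ \ell^{2m}\big(z + \epsilon(w - x) - b\big) \equiv 0 \pmod{\ell^{2m+1}}, \]
that is, $\epsilon(x - w) \equiv z - b \pmod\ell$. Hence $X$ fixes $L_{\epsilon\ell^{m-1}}$ for the sign $\epsilon$ satisfying this congruence and for no admissible line otherwise; allowing either sign, this is precisely $\pm(x-w)\equiv (z-b)\pmod\ell$. Combining with Step~1 yields the asserted equivalence.

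\textbf{Main obstacle.} Conceptually the crux is Step~1, but it is immediate once Lemma~\ref{kevec} is available. The genuine care lies in Step~2's bookkeeping of which powers of $\ell$ survive modulo $\ell^{2m+1}$: the argument really uses $m\geq 1$ (through $3m\geq 2m+1$ and $c^2\equiv \ell^{2m}$), and one must verify that the free parameter $j$ drops out entirely and that only $\delta-\alpha$ modulo $\ell^m$ — and then only $w-x$ modulo $\ell$ — can affect solvability, so that the two displayed congruences together capture the whole criterion. A minor point to be stated cleanly is the compatibility of bases: that we may simultaneously put $\kk$ in standard form and take $X$ upper triangular modulo $\ell^{2m}$, which is how the lemma will be applied.
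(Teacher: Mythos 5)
Your proof is correct and follows essentially the same route as the paper's: invoke Lemma~\ref{kevec} to reduce to the two candidate invariant lines, write down the eigenvector condition for $X$ on such a line modulo $\ell^{2m+1}$, deduce $\ell^m \mid (x_1 - x_2)$ by valuation, and then read off the congruence $\pm(x-w)\equiv(z-b)\pmod\ell$. The only difference is that you carry the free parameter $j$ explicitly and verify it drops out, whereas the paper compresses this to ``a consideration of valuations of the monomials''; this is a presentational, not a mathematical, difference.
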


\begin{proof}
In a suitable basis, it suffices by Lemma \ref{kevec} to consider when $X = \begin{pmatrix} x_1 & b \\ \ell^{2m}z & x_2 \end{pmatrix}$ stabilizes the line spanned by $\vect{1}{k \ell}$ for $k \equiv \pm \ell^{m-1} \pmod{\ell^{2m-1}}$.  The eigenvector equation reduces to
\[\ell^{2m-1}z + k x_2 \equiv k x_1 + k^2 \ell b \pmod{\ell^{2m}}. \]
A consideration of valuations of the monomials gives that $\ell^m \mid (x_1-x_2)$.  Now letting $x_1 = a + \ell^m x$ and $x_2 = a+ \ell^m w$, the above simplifies to $(x-w) \equiv \pm (z-b) \pmod{\ell}$ as desired.
\end{proof}

\begin{lem}\label{inductcong}
Let $Y \in \GL_2(\zz/\ell^{2m+1} \zz)$ be contained in a Borel subgroup mod $\ell^{2m}$ with equal diagonal entires modulo $\ell^j$ for $0 < j < m$.  Let \(K \subset \kk\) be any subgroup that is non-Cartan mod \(\ell\).  If every element of $Y \cdot K$ has square discriminant then $Y$ has equal diagonal entries modulo $\ell^{j+1}$.
\end{lem}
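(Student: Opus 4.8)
The plan is to argue by contradiction, producing a single element of $Y\cdot\kk$ whose discriminant fails to be a square modulo $\ell^{2m+1}$. Working in a basis in which $\kk$ has its standard form and $Y$ is upper triangular modulo $\ell^{2m}$, write
\[ Y=\begin{pmatrix} a+\ell^j x & b\\ \ell^{2m}z & a+\ell^j w\end{pmatrix},\qquad a\in(\zz/\ell^{2m+1}\zz)^{\times}, \]
so that the assertion ``$Y$ has equal diagonal entries modulo $\ell^{j+1}$'' is exactly $\ell\mid (x-w)$; I would assume instead that $\ell\nmid (x-w)$. Then $\Delta(Y)=\ell^{2j}(x-w)^2+4\ell^{2m}bz$ has $\ell$-adic valuation exactly $2j\le 2m-2$, and since the identity matrix lies in $\kk$, $\Delta(Y)$ is a square modulo $\ell^{2m+1}$; hence the characteristic polynomial of $Y$ splits, with roots $\lambda\neq\mu$ satisfying $\lambda\equiv\mu\equiv a\pmod\ell$ and $v_\ell(\lambda-\mu)=j$.

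I would then pass to the power $Z\colonequals Y^{\ell^{m-j}}\in Y\cdot\kk$ and record its structure. Writing $Y$ as an upper-triangular matrix plus $\ell^{2m}z$ times $E_{21}$ and discarding the $(\ell^{2m})^2$-terms shows that the $(2,1)$-entry of $Y^N$ is congruent to $\ell^{2m}Nz\,a^{N-1}$ modulo $\ell^{2m+1}$; since $\ell\mid\ell^{m-j}$, the matrix $Z$ is upper triangular modulo $\ell^{2m+1}$, say $Z=\begin{pmatrix}\alpha&\beta\\0&\gamma\end{pmatrix}$, so $\Delta(Z)=(\alpha-\gamma)^2$ and $\alpha,\gamma$ are the $\ell^{m-j}$-th powers of $\lambda,\mu$. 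The arithmetic input I would use is that raising to the $\ell$-th power raises the valuation of the difference of two units congruent modulo $\ell$ by exactly $1$: if $v_\ell(u-u')=c\ge1$, then in $u^\ell-u'^\ell$ the term $\ell(u-u')$ has valuation $c+1$ while every other binomial term has valuation $\ge c+2$. Iterating $m-j$ times gives $v_\ell(\alpha-\gamma)=m$, so $(\alpha-\gamma)^2=\ell^{2m}\mu_0$ with $\mu_0$ a nonzero square modulo $\ell$. Finally, modulo $\ell$, $Y$ is a scalar times a unipotent matrix, and the $\ell^{m-j}$-th power of a unipotent matrix is unipotent with off-diagonal entry divisible by $\ell^{m-j}$; hence $\beta\equiv 0\pmod\ell$, while $\alpha\equiv\gamma\equiv a^{\ell^{m-j}}\pmod\ell$, so $\alpha\gamma\equiv(a^{\ell^{m-j}})^2$ is a nonzero square modulo $\ell$.

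I would then run $k=\begin{pmatrix}r&s\\ \ell^{2m}s&r\end{pmatrix}$ over $\kk$ ($r$ a unit, $s$ arbitrary) and compute, modulo $\ell^{2m+1}$,
\[ \Delta(Zk)\equiv (\alpha-\gamma)^2r^2+\ell^{2m}\bigl(2(\alpha+\gamma)\beta\,rs+4\alpha\gamma s^2\bigr)\equiv \ell^{2m}\bigl(\mu_0 r^2+4\alpha\gamma s^2\bigr), \]
the cross term dropping because $\ell\mid\beta$. The binary quadratic form $Q(r,s)=\mu_0 r^2+4\alpha\gamma s^2$ over $\ff_\ell$ is nondegenerate (discriminant $-16\mu_0\alpha\gamma$ is a unit), so it represents every element of $\ff_\ell$; since $Q(0,s)=4\alpha\gamma s^2$ takes only square values, any quadratic non-residue $n$ is represented by some $(r_0,s_0)$ with $r_0$ a unit. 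For the corresponding $k_0\in\kk$, the element $Zk_0\in Y\cdot\kk$ satisfies $\Delta(Zk_0)\equiv \ell^{2m}n\pmod{\ell^{2m+1}}$, which is not a square modulo $\ell^{2m+1}$ (a square of valuation $2m$ is $\ell^{2m}$ times a square unit). This contradicts the hypothesis on $Y\cdot\kk$, so $\ell\mid(x-w)$, completing the proof.

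The step I expect to be the main obstacle is the valuation bookkeeping for $Z$---in particular, establishing that after taking the power $Y^{\ell^{m-j}}$ the two eigenvalues are separated by \emph{exactly} $\ell^m$, which is what makes the leading coefficient $\mu_0$ of $\Delta(Zk)$ a unit and $Q$ a genuinely nondegenerate form (hence one representing non-residues). The hypothesis $0<j<m$ enters exactly here: $j\ge1$ legitimizes the ``raising to the $\ell$-th power'' increment, while $j<m$ both leaves room to take the positive power $\ell^{m-j}$ and forces $Z$ to be upper triangular modulo $\ell^{2m+1}$; the borderline case $j=m$ behaves differently and is the content of Lemma~\ref{borelcomp}.
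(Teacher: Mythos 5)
Your proof is correct and takes essentially the same approach as the paper: both consider the element $Y^{\ell^{m-j}}\cdot R$ for $R$ running over $\kk$, reduce the square-discriminant condition to positivity of a binary quadratic form $Q(r,s)$ over $\ff_\ell$ (your $\mu_0 r^2 + 4\alpha\gamma s^2$ is $a^{2\ell^{m-j}-2}$ times the paper's $(x-w)^2r^2+4a^2s^2$), and conclude that $Q$ must be degenerate, forcing $\ell\mid(x-w)$. The only difference is presentational: you control the valuation of the diagonal gap of $Y^{\ell^{m-j}}$ via an eigenvalue/lifting-the-exponent argument, whereas the paper proves and then plugs into an explicit formula for the entries of $Y^k$.
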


\begin{proof}
By hypothesis, every element of $Y \cdot K$ is a power of $Y$ times an element of $K$.   
Write
\[Y = \begin{pmatrix} a + \ell^j x & b \\ \ell^{2m}z & a + \ell^j w \end{pmatrix}, \text{ and}\qquad  R = \begin{pmatrix} r & s \\ \ell^{2m}s & t \end{pmatrix}\]
 for an element of $K$ (recall that $r \equiv t \pmod{\ell^{2m}}$).  It is easy to prove by induction that
\[ Y^k = \begin{pmatrix} (a+\ell^jx)^k + {k \choose 2} \ell^{2m}a^{k-2}zb & k a^{k-1}b + \ell(\Ast) \\ k \ell^{2m}za^{k-1} & (a+\ell^j w)^k + {k \choose 2} \ell^{2m}a^{k-2} zb \end{pmatrix},\]
where $\Ast$ is an element of $\zz/\ell^{2m}\zz$, whose precise value is not important for the discriminant computation we are undertaking.
Now consider the product
\[Y^{\ell^{m-j}} \cdot R = \begin{pmatrix} (a+\ell^jx)^{\ell^{m-j}} &\ell(\Ast') \\ 0 & (a+\ell^jw)^{\ell^{m-j}} \end{pmatrix} \begin{pmatrix} r & s \\ \ell^{2m}s & t \end{pmatrix} = \begin{pmatrix} r(a+\ell^jx)^{\ell^{m-j}} & a^{\ell^{m-j}}s + \ell(\Ast'') \\ \ell^{2m}a^{\ell^{m-j}}s & t(a+\ell^jw)^{\ell^{m-j}} \end{pmatrix}, \]
where again $\Ast'$ and $\Ast''$ are elements of $\zz/\ell^{2m}\zz$.
This element has discriminant
\begin{align*}
\Delta(Y^{\ell^{m-j}} \cdot R) &= \left( r(a+\ell^j x)^{\ell^{m-j}} - t(a+\ell^jw)^{\ell^{m-j}} \right)^2 + 4\ell^{2m}a^{2\ell^{m-j}}s^2 \\
&= \left( \sum_{k=0}^{\ell^{m-j}} {\ell^{m-j} \choose k} \ell^{jk} a^{\ell^{m-j}-k} (rx^k-tw^k) \right)^2 +4\ell^{2m}a^{2\ell^{m-j}}s^2 \\
&\equiv \left( r\ell^ma^{\ell^{m-j}-1}(x-w) + \sum_{k=2}^{\ell^{m-j}} {\ell^{m-j} \choose k} \ell^{jk} ra^{\ell^{m-j}-k} (x^k-w^k) \right)^2 +4\ell^{2m}a^{2\ell^{m-j}}s^2\\
&\qquad \qquad\qquad \qquad\qquad \qquad \qquad\qquad \qquad \qquad  \qquad\qquad \qquad\qquad \pmod{\ell^{2m+1}},\\
\end{align*}
since every term in sum includes some power of $\ell$, and $r\equiv t \pmod{\ell^{2m}}$.  Notice that the minimal $\ell$-adic valuation of any term of the above expression is $2m$.
By Hensel's lemma (and our assumption that $\ell \neq 2$) the discriminant $\Delta(Y^{\ell^{m-j}} \cdot R)$ is a square modulo $\ell^{2m+1}$ if and only if it is zero or an even power of $\ell$ times a (nonzero) quadratic residue mod $\ell$.  Hence the above discriminant is a square if and only if
\[r^2a^{2\ell^{m-j}-2}(x-w)^2 + 4a^{2\ell^{m-j}}s^2 = a^{2\ell^{m-j}-2}(r^2(x-w)^2 + 4a^2s^2), \]
is a square mod $\ell$.  In order for \emph{every} element of $Y \cdot K$ to have square discriminant, the quadratic form
\[Q(r,s) = (x-w)^2/r^2 + 4a^2s^2 \]
must be zero or a quadratic residue modulo $\ell$ for all values of $s/r \in \ff_\ell$.  (We are using here that for non-Cartan \(K\), all possible values of \(s/r\) are achieved.)  Hence the discriminant of this form, $-16(x-w)^2a^2$, must be zero mod $\ell$ (any form with nonzero discriminant represents a quadratic non-residue).  As $\ell \nmid a$, we have $\ell \mid (x-w)$.  So in fact the diagonal characters are equal modulo $\ell^{j+1}$.  \end{proof}

\begin{lem}\label{lastcong}
Let $Y \in \GL_2(\zz/\ell^{2m+1}\zz)$ be of the form $\begin{pmatrix} a + \ell^mx & b \\ \ell^{2m}z & a + \ell^m w \end{pmatrix}$.  Let \(K \subset \kk\) be any subgroup that is non-Cartan mod \(\ell\).  If every element of $Y \cdot K$ has square discriminant then 
\[ (x-w) \equiv \pm(z-b) \pmod{\ell}. \]
\end{lem}

\begin{proof}
As above, let $R = \begin{pmatrix} r  & s \\ \ell^{2m}s & t \end{pmatrix}$ be an element of $K$.  Then we have
\begin{align*}
\Delta(Y) &=  \ell^{2m}\left((x-w)^2 + 4 bz\right), \\
\Delta(Y \cdot R) &= \ell^{2m}r^2(x-w)^2 + 4 \ell^{2m}(zr + as)(as+bt) \\
&= r^2 \Delta(Y) + 4\ell^{2m}(a^2s^2 + asr(z+b)).
\end{align*}
If $\Delta(Y)$ is a square, then it is $\ell^{2m} \delta^2$, for some $\delta \in \zz/\ell\zz$.  In order for $\Delta(Y \cdot R) = \ell^{2m}(\delta^2r^2 + 4a(z+b) rs + 4a^2 s^2)$ to be square, we must have that the quadratic form
\[Q(r,s) = \delta^2r^2 + 4a(z+b) rs + 4a^2 s^2 \]
does not represent any quadratic nonresidue of $\zz/\ell\zz$.  Hence (using that all possible values of \(s/r\) are achieved) the discriminant of $Q$ must be $0$ mod $\ell$:
\[(z+b)^2 \equiv (x-w)^2 + 4zb \pmod{\ell},\]
which implies
\[ (x-w) \equiv \pm (z-b) \pmod{\ell}, \]
as desired.
\end{proof}

\begin{prop}\label{excepX}
Let $X$ be radical mod $\ell$ and upper-triangular mod $\ell^{2m}$, but not diagonal mod $\ell$.  Let \(K \subset \kk\) be any subgroup that is non-Cartan mod \(\ell\).  Then $X \cdot K$ is lift-exceptional (at step $2m+1$) only if the diagonal characters of $X$ are opposite modulo $\ell^{m+1}$.  Furthermore, if the diagonal characters of $X$ are opposite modulo $\ell^{m+1}$, then $X \cdot \kk$ is lift-exceptional (at step $2m+1$)
\end{prop}

\begin{proof}
In order to be lift-exceptional (at step $2m+1$), $X \cdot K$ must not be contained in a Borel subgroup and every element of $X \cdot K$ must have square discriminant.  We consider two cases separately:

\textbf{$X$ has equal diagonal characters mod $\ell$:} in this case, write
\[ X = \begin{pmatrix} a + \ell^j x & b \\ \ell^{2m}z & a + \ell^j w \end{pmatrix}, \text{ and}\qquad  R = \begin{pmatrix} r & s \\ \ell^{2m}s & t \end{pmatrix},\]
for some $j \geq 1$.  
Lemma \ref{borelcomp} shows that $X \cdot \kk$ is contained in a Borel subgroup if and only if $j=m$ and 
\begin{equation}\label{fundcong} (x-w) \equiv \pm (z+b) \pmod{\ell}. \end{equation}
But Lemmas \ref{inductcong} and \ref{lastcong} with $Y = X$ show that the congruence \eqref{fundcong} must hold if every element of $X \cdot K$ has square discriminant.  So $X \cdot K$ is never lift-exceptional.

\textbf{$X$ has opposite diagonal characters mod $\ell$:} Notice that the entire group \(\kk(\ell^{2m+1})\) is not conjugate to any subgroup of \(A_{a, a+1, n-a-1}(\ell^n)\) for any \(0 \leq a < m\).  The group \(\kk(\ell^{2m+1})\) is also not contained in a split Cartan subgroup modulo \(\ell\).  For this reason, to check if the group \(X \cdot \kk\) is lift-exceptional, it suffices to check that it is not contained in a Borel subgroup (i.e., Definition \ref{lift_defi} \eqref{defin:lift_3}\&\eqref{defin:lift_5} are impossible). Lemma \ref{borelcomp} shows that $X \cdot \kk$ is never contained in a Borel subgroup.  

It suffices to show that every element of $X \cdot K$ has square discriminant precisely when $X$ has opposite diagonal characters modulo $\ell^{m+1}$.  
If $X$ has opposite diagonal characters modulo $\ell^j$, then $X^2$ has equal diagonal characters modulo $\ell^j$.  Hence Lemma \ref{inductcong} applied to $Y = X^2$ shows that this assumption forces the diagonal characters to be opposite modulo $\ell^m$ (since the sum and difference of the diagonal characters of $X$ can't both be divisible by $\ell$ because $X$ is contained in a Borel and invertible mod $\ell$).  Similarly, Lemma \ref{lastcong} applied to $Y = X^2$ shows that the diagonal characters are opposite modulo $\ell^{m+1}$, since the ``$z$" and ``$b$" of $Y = X^2$ are both $0$ mod $\ell$.  So this congruence condition is necessary for $X \cdot K$ to be lift-exceptional.
It is an easy calculation that all elements of $X \cdot \kk$ have square discriminant if this congruence condition is met. 
\end{proof}

\subsection{Proof of Theorem \ref{upthm_group}}

To prove Theorem \ref{upthm_group}, it suffices to prove that for any choice of $X$ subject to the constraints of Proposition \ref{excepX}, the exceptional subgroup generated is contained in the group
\[R(\ell^{2m+1}) = \left\{\begin{pmatrix} r & s \\  \ell^{2m}(\epsilon s) & \epsilon t \end{pmatrix} \ : \ {\epsilon = \pm 1 \atop r \equiv t \pmod{\ell^{m+1}} } \right\}, \]
in an appropriate basis.

\begin{prop}
For any $X$ of the form
\[X = \begin{pmatrix} a + \ell^{m+1}x & b \\ \ell^{2m}z & -a + \ell^{m+1}w \end{pmatrix}, \]
$X \cdot \kk$ is contained in $R(\ell^{2m+1})$  up to conjugation.
\end{prop}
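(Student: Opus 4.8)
The plan is to produce an explicit conjugator — an upper-triangular unipotent matrix $g_c=\sm{1}{c}{0}{1}$ — and check that after conjugating by it both $X$ and all of $\kk=\kk(\ell^{2m+1})$ land inside $R(\ell^{2m+1})$. Two preliminary reductions make this enough. First, $\kk$ already lies in $R(\ell^{2m+1})$: an element $\sm{r}{s}{\ell^{2m}s}{t}$ of $\kk$ has $r\equiv t\pmod{\ell^{2m}}$, hence $r\equiv t\pmod{\ell^{m+1}}$ since $m\geq 1$, so it is the $\epsilon=1$ member of $R(\ell^{2m+1})$. Second, $R(\ell^{2m+1})$ is a subgroup of $\GL_2(\zz/\ell^{2m+1}\zz)$: multiplying two of its elements with signs $\epsilon_1,\epsilon_2$, a short computation shows the product again has the required shape with sign $\epsilon_1\epsilon_2$ — the lower-left entry equals $\ell^{2m}\epsilon_1\epsilon_2$ times the upper-right entry because the error term carries a factor $\ell^{2m}\cdot\ell^{m+1}$ and so dies mod $\ell^{2m+1}$, while the two diagonal entries of the product are congruent mod $\ell^{m+1}$ because $r_1r_2\equiv t_1t_2\pmod{\ell^{m+1}}$. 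Granting these, it suffices to find $g$ with $g\kk g^{-1}\subseteq R(\ell^{2m+1})$ and $gXg^{-1}\in R(\ell^{2m+1})$, since then the group generated by $gXg^{-1}$ and $g\kk g^{-1}$, which is $g\,(X\cdot\kk)\,g^{-1}$, lies in $R(\ell^{2m+1})$.

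Next I would verify that conjugation by $g_c$ carries $\kk$ into itself for every $c$. Computing $g_c\sm{r}{s}{\ell^{2m}s}{t}g_c^{-1}$, the lower-left entry stays $\ell^{2m}s$ and the upper-right entry becomes $s+c(t-r)-c^2\ell^{2m}s$; since $t-r\equiv 0\pmod{\ell^{2m}}$ and $4m\geq 2m+1$, the identity $\ell^{2m}\cdot(\text{upper-right entry})\equiv\ell^{2m}s\pmod{\ell^{2m+1}}$ holds, so the conjugate is again of the form $\sm{r'}{s'}{\ell^{2m}s'}{t'}$, and $r'-t'=(r-t)+2c\ell^{2m}s\equiv 0\pmod{\ell^{2m}}$, so it lies in $\kk$.

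The heart of the argument is the conjugation of $X$. Writing out $g_cXg_c^{-1}$, the lower-left entry remains $\ell^{2m}z$ while the upper-right entry becomes $b-2ac+c\ell^{m+1}(w-x)-c^2\ell^{2m}z$. Here $a$ is a unit — forced by $\det X\equiv -a^2\pmod\ell$ — so, because $\ell$ is odd, I may take $c$ to be any lift of $(z+b)(2a)^{-1}\bmod\ell$, i.e.\ with $2ac\equiv z+b\pmod\ell$. With this choice the two terms of the upper-right entry involving $c\ell^{m+1}$ and $c^2\ell^{2m}$ get multiplied by $\ell^{2m}$ and so vanish mod $\ell^{2m+1}$ (using $m\geq 1$), which reduces the desired identity $\ell^{2m}z\equiv -\ell^{2m}\cdot(\text{upper-right entry})\pmod{\ell^{2m+1}}$ to $z\equiv -(b-2ac)\pmod\ell$. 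Hence $g_cXg_c^{-1}$ has the form $\sm{r'}{s'}{-\ell^{2m}s'}{-t'}$; and its diagonal entries $r'=a+\ell^{m+1}x+c\ell^{2m}z$ and $t'=a-\ell^{m+1}w+c\ell^{2m}z$ differ by $\ell^{m+1}(x+w)$, so $r'\equiv t'\pmod{\ell^{m+1}}$ and $g_cXg_c^{-1}\in R(\ell^{2m+1})$ with sign $\epsilon=-1$. Combining the three steps gives $g_c\,(X\cdot\kk)\,g_c^{-1}\subseteq R(\ell^{2m+1})$.

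Every computation here is routine; the only point that needs attention — the obstacle, such as it is — is the $\ell$-adic bookkeeping, namely repeatedly invoking $m\geq 1$ to know that $\ell^{2m}$ already absorbs $\ell^{m+1}$ and that $\ell^{3m+1},\ell^{4m}\equiv 0\pmod{\ell^{2m+1}}$. Once the conjugator $c$ is pinned down by the single linear congruence $2ac\equiv z+b\pmod\ell$, everything else is formal.
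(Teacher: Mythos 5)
Your proposal is correct and follows essentially the same route as the paper: conjugate by an upper-triangular unipotent $g_c = \sm{1}{c}{0}{1}$, with $c$ pinned down by the single linear congruence $2ac\equiv z+b\pmod\ell$ (exactly matching the paper's conjugator $\sm{1}{\mu}{0}{1}$ up to sign convention), and check that $\kk$ is preserved. The one place you streamline, to your advantage, is the conclusion: the paper normalizes $X$ to have $z\equiv -b\pmod\ell$, asserts somewhat loosely that all powers of the normalized $X$ keep that shape, and then multiplies out $X'R$ for $R\in\kk$; you instead verify once that $R(\ell^{2m+1})$ is itself a subgroup and that $g_cXg_c^{-1}$ and $\kk$ each land in it, so the generated group is automatically contained. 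That sidesteps the elided verification about powers of $X$, and all the $\ell$-adic estimates you invoke ($3m+1\geq 2m+2$, $4m\geq 2m+2$ for $m\geq 1$) are correct.
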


\begin{proof}
If $X$ has the above form with $z \equiv -b \pmod{\ell}$, then all of its powers will too.  Hence it suffices to check that something of this form times an arbitrary element of $\kk$ is contained in $R(\ell^{2m+1})$:
\[\begin{pmatrix} a + \ell^{m+1}x & b \\ -\ell^{2m}b & -a + \ell^{m+1}w \end{pmatrix} \cdot \begin{pmatrix} r & s \\ \ell^{2m}s & t \end{pmatrix} = \begin{pmatrix} r(a+\ell^{m+1}) + \ell^{2m}bs & s(a + \ell^{m+1}x) + bt \\ \ell^{2m}(-sa-br) & t(-a + \ell^{m+1})-\ell^{2m}sb \end{pmatrix}, \]
which is clearly true.

Now it remains only to show that we may conjugate $X$ and $\kk$ by an appropriate matrix $M$ so as to bring $\kk$ into itself and bring $X' = M^{-1}XM$ into another matrix of the same form except with off-diagonal entries of $X'$ so that $z' \equiv -b' \pmod{\ell}$.  This is achieved by the a matrix of the form $M = \begin{pmatrix} 1 & \mu \\ 0 & 1 \end{pmatrix}$
as we have that:
\[M^{-1} \cdot X \cdot M = \begin{pmatrix} a + \ell^{m+1}x - \ell^{2m}z\mu & 2\mu  a + b + \ell^{m+1}\mu(x-w) - \ell^{2m}\mu^2z \\ \ell^{2m}z & -a + \ell^{m+1} w + \ell^{2m}z\mu \end{pmatrix}. \]
As $a$ is a unit, we may choose $\mu \equiv \frac{z-b}{2a} \pmod{\ell}$.

Finally, on some element $R$ of $\kk$ this acts as
\[ M^{-1} \cdot R \cdot M = \begin{pmatrix} r - \ell^{2m}s\mu & s + \mu(r-t) - \ell^{2m}s\mu^2 \\ \ell^{2m}s & t + \ell^{2m}s \mu \end{pmatrix}, \]
which is again an element of $\kk$ as $\ell^{2m} \mid (r-t)$.
\end{proof}

\begin{rem}
$R(\ell^{2m+1})$ is itself exceptional and corresponds to $X \cdot \kk(\ell^{2m+1})$ for
\[X = \begin{pmatrix} 1 & 0 \\ 0 & -1 + \ell^{m+1} \end{pmatrix}. \]
\end{rem}

We now prove the last part of Proposition \ref{ellnthm_group}. 

\begin{cor}\label{quadextn}
Let $E$ be an elliptic curve over $K$.  If $\C(E)$ has an $\ell^{2m}$-isogeny but fails to have an $\ell^{2m+1}$-isogeny, then there exists some quadratic extension $F/K$ such that the isogeny class $\C(E_F)$ of the base change of $E$ to $F$ has an $\ell^{2m+1}$-isogeny.
\end{cor}
\begin{proof}
The index $2$ subgroup of $R(\ell^{2m+1})$ where $\epsilon = +1$ is contained in the Borel subgroup corresponding to the vector $\vect{1}{\ell^m}$.
\end{proof}

\begin{cor}\label{counter}
For every $\ell$ and every $m$, there exists a number field $K$ and an elliptic curve $E$ over $K$ such that $\C(E)$ has an $\ell$-isogeny and locally almost everywhere has an $\ell^{2m+1}$-isogeny, but does not have an $\ell^{2m+1}$-isogeny.
\end{cor}
\begin{proof}
It suffices to exhibit a subgroup $H$ of $\GL_2(\zz/\ell^{2m+1} \zz)$ that is lift-exceptional at step $2m+1$ for every $\ell$.  Let $E'/K'$ be an elliptic curve over a number field with surjective $\ell$-adic Galois representation.  Let $K = K'(E'[\ell^{2m+1}])^H$.  By Galois theory, the base change $E = E'_{K}$ has $G_E(\ell^{2m+1}) = H$.
As noted in the above Remark, we can simply take $H = R(\ell^{2m+1})$.
\end{proof}

\begin{rem}
If instead of asking for a local-global principle for the isogeny class of $E$ we asked for a strict local-to-global principle, our potentially lift-exceptional restrictions would still give a necessary condition on exceptional subgroups if $G(\ell)$ is not contained in a split Cartan subgroup.  The following example shows that the set of exceptional subgroups is still non-empty even when you add the extra structure of $E$ itself having an $\ell^n$-isogeny locally almost everywhere:

Let $H \subseteq \GL_2(\zz/\ell^3 \zz)$ be the subgroup generated by
\[M = \begin{pmatrix} 1 & 1 \\ \ell^2 & 1 \end{pmatrix}, \qquad X = \begin{pmatrix} 1 & 0\\0 & -1 \end{pmatrix}. \]
$H$ is not contained in a Borel subgroup mod $\ell^3$ as the only two Borel subgroups containing $X$ are the upper and lower-triangular matrices.  $M$ itself is contained in a Borel, as
\[ \begin{pmatrix} 1 & 1  \\ \ell^2 & 1 \end{pmatrix} \begin{pmatrix} 1 \\ \ell \end{pmatrix} = (1+ \ell)\begin{pmatrix} 1  \\ \ell \end{pmatrix}. \]
Hence $H$ cannot be the image of the mod $\ell^3$ Galois representation of an elliptic curve $E$ satisfying the strict conclusions of the local-to-global principle ($\C(E)$ also fails the conclusions of the local-global principle).  All that remains is to show that it satisfies the hypotheses -- that is that every element is contained in a Borel subgroup.  

To show this, we may replace $H$ with the quotient by all scalar matrices $\pp H$.
In this case we have the relations $M^{\ell^3} = 1, X^2 = 1, XMX = M^{-1}$.  This implies that we have an exact sequence
\[0 \to \langle M \rangle \to \pp H \to \zz/2\zz \to 0, \]
where the last map counts the number of copies of $X$ modulo $2$.  If the image in $\zz/2 \zz$ is $1$, then mod $\ell$, the matrix is of the form $\begin{pmatrix} 1 & \Ast \\ 0 & -1 \end{pmatrix}$ (modulo scalars), and hence has distinct eigenvalues mod $\ell$.  Lemma \ref{hensel} guarantees that the same is true mod $\ell^3$, and it is necessarily diagonalizable, hence contained in a Borel.  If a matrix is in the kernel of this map, then it is in the group generated by $M$, and hence contained in a Borel.
\end{rem}

\section{Analysis at $\ell =2$}\label{two}

Although the above analysis in $\S$\ref{sec_gpthy} only holds for odd primes, we may computationally explore the picture at $2$.  The \textsc{Sage} and \textsc{Magma} code necessary to find all exceptional subgroups of $\GL_2(\zz/2^n\zz)$ for $n \leq 6$ can be found at \url{https://github.com/ivogt161/isogeny}.

This classification differs from the case of odd primes $\ell$ in two ways.  There we only classified \emph{minimally} exceptional images of Galois, which were assumed to be contained in a Borel subgroup modulo one lower power of $\ell$.  This ignores (1) exceptional groups which are images of $\ell^r$-isogenous curves and (2) exceptional groups modulo higher powers of $\ell$ whose reduction is contained in a minimal one.  However, if there is an exception, then there is a minimal exception, so this simplification turned out to be harmless since there are finitely many lift-exceptional subgroups modulo every odd prime power.  The presence of genus 0 and 1 modular curves for 2-power levels required a finer analysis, see the table in Appendix \ref{2data}.
 
We summarize the results here.

\begin{prop}\label{2exceptions}
Let $H \subset \GL_2(\zz/2^n\zz)$ be exceptional.  Then
\begin{enumerate}
\item $n = 3$ or \(n \geq 5\).
\item If $n=3, 5$ or $6$, then $H$ is a subgroup of one of the maximal exceptional subgroups listed in the table in Appendix \ref{2data}.
\item If $n\geq 6$, then the corresponding modular curve $X_H$ has genus at least $2$.
\end{enumerate}
\end{prop}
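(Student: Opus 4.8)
\begin{skproof}
The plan is to treat parts (1) and (2) as a classification organized around one structural reduction, and to deduce (3) from the genus formula together with a Riemann--Hurwitz comparison. I would first record the reduction modulo $2$: if $\bar g\in\GL_2(\ff_2)$ has characteristic polynomial with a root, then since $0$ is not an eigenvalue of an invertible matrix its only eigenvalue is $1$, so $\bar g$ is unipotent; and a subgroup of $\GL_2(\ff_2)\cong S_3$ all of whose elements are unipotent has order at most $2$ (two distinct transpositions already generate $S_3$, which contains $3$-cycles), hence is conjugate into $\{\sm{1}{\Ast}{0}{1}\}$. Therefore any exceptional $H\subseteq\GL_2(\zz/2^n\zz)$ has $H\bmod 2$ conjugate into the unipotent Borel; in particular $H\bmod 2$ is Borel. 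For $n=1$ this is already the forbidden decomposition with $j=1$, $k=0$, so $n\geq 2$. For $n=2$: if $H\bmod 2=\{I\}$ then $H$ is simultaneously Borel and split Cartan mod $2$, the forbidden decomposition $j=k=1$; if $H\bmod 2$ has order $2$ then it is Borel but not Cartan mod $2$, so exceptionality would force $H$ to be non-Borel mod $4$, and a direct check -- using Proposition \ref{2jord2} for the elements congruent to $\sm{1}{1}{0}{1}$, together with a common-eigenvector argument -- shows this cannot happen. Hence $n\geq 3$, which is (1).

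For (2) the plan is a finite enumeration. Working up to conjugacy inside the index-$3$ preimage of $\{\sm{1}{\Ast}{0}{1}\}$ in $\GL_2(\zz/2^n\zz)$, one lists all subgroups, discards those that are Borel mod $2^j$ and Cartan mod $2^{n-j}$ for some $0\leq j\leq n$, and among the remainder keeps those in which every element has characteristic polynomial with a root mod $2^n$. For $3\leq n\leq 6$ the maximal groups surviving both tests are precisely the entries of Table 1; this is what the cited \textsc{Sage}/\textsc{Magma} computation does, and the reduction above is what keeps it feasible for $n=5,6$.

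Finally (3). We may assume $-I\in H$, replacing $H$ by $\langle H,-I\rangle$, which is still exceptional and only decreases the genus of the associated modular curve. For $n=6$ I would compute $g(X_H)$ for each maximal exceptional $H$ in Table 1 from the uniformization $X_H^\circ(\cc)\simeq\Gamma_{\bar H}\backslash\hh^*$ and the standard genus formula (index, elliptic points, cusps), finding genus $\geq 2$ in every case; any exceptional $H$ at level $2^6$ lies in one such $H_{\max}$, so the finite morphism $X_H\to X_{H_{\max}}$ and Riemann--Hurwitz give $2g(X_H)-2\geq 2g(X_{H_{\max}})-2\geq 2$, whence $g(X_H)\geq 2$. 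For $n\geq 7$ I would rule out $g(X_H)\leq 1$: the exceptional groups at these levels are, after adjoining $-I$, contained in subgroups of $\GL_2(\zz/2^n\zz)$ of index growing with $n$ (roughly, normalizers of Cartan-type subgroups), while the cusp and elliptic-point counts of $X_H$ remain controlled, so the genus formula forces $g(X_H)\geq 2$; equivalently, $g(X_H)\leq 1$ would make $\Gamma_{\bar H}$ one of the finitely many congruence subgroups of genus $\leq 1$, hence of bounded level, and an inspection of that finite list of $2$-power levels finishes the argument.

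The step I expect to be the main obstacle is this last one. Exceptionality is not monotone under reduction modulo $2^m$ -- a group can fail every Borel/Cartan decomposition at level $2^n$ while its reduction acquires one -- which is exactly why one must track all exceptional subgroups rather than only the ``minimal'' ones, and why $n\geq 7$ cannot simply be reduced to Table 1. Making the ``large index'' (equivalently ``bounded level'') claim precise, whether via the classification of low-genus congruence subgroups or via a $2$-adic lifting analysis in the spirit of Proposition \ref{2jord2}, is where the real work of part (3) lies; this is the finer analysis alluded to in the remark preceding the statement.
\end{skproof}
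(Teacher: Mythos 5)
Your overall shape matches what the paper actually does. The paper gives no written proof of this proposition beyond describing the algorithm: the classification for $n\leq 6$ is obtained by a computer search (the \textsc{Sage}/\textsc{Magma} code referenced in the text), which directly enumerates all exceptional subgroups, finds none for $n\leq 2$, produces Table 1 for $3\leq n\leq 6$, and reads off genus $\geq 2$ (in fact $=3$) for the $n=6$ entries. Your part (1) argument via reduction modulo $2$ -- that every element with a rational characteristic root reduces to a unipotent, so $H\bmod 2$ has order $\leq 2$ and is Borel, killing $n=1$ and $n=2$ -- is a nice conceptual replacement for the brute-force check and is correct, and your part (2) correctly defers to the enumeration. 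For $n=6$ your Riemann--Hurwitz step is also what the table makes available: since every exceptional $H$ at level $2^6$ lies inside a maximal one, and those all have genus $3$, the cover $X_H\to X_{H_{\max}}$ forces $g(X_H)\geq 3$.

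The place I would push back is exactly where you yourself flag trouble: the passage from $n=6$ to $n\geq 7$. You correctly observe that exceptionality is not preserved under reduction (the reduction can acquire a Borel/Cartan decomposition), so one cannot simply say ``$H(2^6)$ is in Table 1.'' But your proposed fixes are left as slogans. The ``index growing with $n$'' claim is not automatic -- an exceptional $H$ at level $2^n$ could in principle be the full preimage of a small-level group, in which case its index in $\GL_2(\zz/2^n\zz)$ is exactly the index at the smaller level. What actually controls this is condition (b): if $H$ is a full preimage from level $2^m$, it contains every $I+2^mM$, and then asking the characteristic polynomial of $I+2^m\sm{0}{1}{3}{0}$ to have a root mod $2^n$ forces $3$ to be a square mod $2^{n-2m}$, i.e. $n-2m\leq 1$. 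So the $\GL_2$-level $2^m$ of an exceptional $H$ satisfies $m\geq\lceil (n-1)/2\rceil$; for $n\geq 7$ this already gives $m\geq 3$, and for $n\geq 10$ one gets $m\geq 5$. This, combined with the Cummins--Pauli finiteness of genus $\leq 1$ congruence subgroups of $2$-power level, is the precise version of your sketch, and it is the computation that would have to be done to close part (3). Without that level bound, ``roughly, normalizers of Cartan-type subgroups'' and ``inspection of that finite list'' do not yet constitute a proof. That said, the paper itself does not spell out the $n\geq 7$ case either, so your attempt is not behind the paper's written argument -- it correctly identifies the live issue and the right tool, it just stops short of carrying it through.
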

\noindent
This proposition can be verified by running the file \texttt{verify\_Proposition\_5-1.py}.

\section{Boundedness of lift-exceptional primes}\label{boundell}

Recall that a prime $\ell$ is called lift-exceptional for the number field $K$ if there exists an elliptic curve $E/K$ with an $\ell$-isogeny such that $\C(E)$ has an $\ell^n$-isogeny locally almost everywhere, but does not have an $\ell^n$-isogeny globally.

The classification Theorem \ref{upthm} shows that if $\ell$ is lift-exceptional for $K$, then there exists some curve $E/K$ with $G_{E}(\ell^{2m+1}) \subseteq R(\ell^{2m+1})$ for some $m \geq 1$, and hence $G_{E}(\ell^{2m+1})$ is necessarily Borel modulo $\ell^{2m}$ and radical modulo $\ell^{m+1}$.  Using this, we bound lift-exceptional primes $\ell$, depending on the number field alone.

We begin with a bound analogous to the bound of Sutherland given in \cite[Theorem 1]{sutherland}.  Being radical modulo $\ell$ alone puts a condition on the number field $K$ when $\ell \equiv 1 \pmod{4}$.

\begin{prop}\label{sqrt}
If $\ell$ is lift-exceptional for $K$ and $\ell \equiv 1 \pmod{4}$, then $\sqrt{\ell} \in K$.
\end{prop}

\begin{proof}
We will show that the existence of an elliptic curve $E/K$ with $G_E(\ell)$ radical forces $\sqrt{\ell} \in K$.
As $\det \rho_{E, \ell}$ is the cyclotomic character, if $\sqrt{\ell} \not\in K$, then 
$G_E(\ell)$ contains elements whose determinant is quadratic nonresidue in $\ff_\ell^\times$.
However, if $\ell \equiv 1 \pmod{4}$, then every element of the radical subgroup $\begin{pmatrix} \chi & \Ast \\ 0 & \pm \chi \end{pmatrix}$ has determinant $\pm \chi^2$, and hence a square modulo $\ell$.
\end{proof}

This bounds lift-exceptional primes $\ell$ that are $1$ modulo $4$ from above by the discriminant of $K$ over $\qq$.  We can obtain a better bound depending on only the degree of $K$ over $\qq$ using the following Lemma and a large image result of Serre.

\begin{lem}\label{iso_inv}
The property that $G_E(\ell)$ is contained in a radical subgroup is an isogeny invariant.
\end{lem}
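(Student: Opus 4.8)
The plan is to reduce this to the statement that a $K$-isogeny $\psi\colon E\to E'$ of elliptic curves induces a $G_K$-equivariant isomorphism between $E[\ell]$ and $E'[\ell]$ \emph{after a bounded modification}, but more precisely I would argue directly on the level of Galois modules. Write $\Phi = \Im(\rho_{E,\ell})$ and $\Phi' = \Im(\rho_{E',\ell})$, viewed inside $\GL_2(\ff_\ell)$ after a choice of bases. Since $E$ and $E'$ are $K$-isogenous, their $\ell$-adic Tate modules $T_\ell E$ and $T_\ell E'$ are isomorphic as $\qq_\ell[G_K]$-modules (tensor the isogeny with $\qq_\ell$); equivalently, $\rho_{E,\ell^\infty}$ and $\rho_{E',\ell^\infty}$ become conjugate in $\GL_2(\qq_\ell)$. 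The first step is to observe that the property I care about — being contained in a radical subgroup mod $\ell$ — is detected by the semisimplification of the mod $\ell$ representation together with the ``self-duality up to sign'' of the one-dimensional pieces, and both of these features are visible on the $\qq_\ell$-representation.

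More concretely, here is the chain of reductions I would carry out. First, $G(\ell)\subseteq$ a radical subgroup means: there is a $G_K$-stable line $L\subseteq E[\ell]$, and the action of $G_K$ on $L$ agrees with the action on $E[\ell]/L$ up to a sign, i.e. if $\chi$ is the character on $L$ and $\chi'$ the character on $E[\ell]/L$ then $\chi' = \pm\chi$, equivalently $\chi^2 = (\chi')^2$, equivalently $\chi^2 = \cyc$ since $\chi\chi' = \det = \cyc$. So being radical mod $\ell$ is equivalent to the assertion that the semisimplification $E[\ell]^{ss}$ is a sum of two characters $\chi_1,\chi_2$ with $\chi_1^2 = \chi_2^2 = \cyc$. (One should double-check the non-semisimple/scalar edge cases: if $E[\ell]$ is irreducible as a $G_K$-module it is certainly not radical, and if it is a nonsplit extension the characters on sub and quotient are still well-defined; the condition $\chi^2 = \cyc$ is what matters.) The second step: the characters appearing in $E[\ell]^{ss}$ are exactly the reductions mod $\ell$ of the characters appearing in the (semisimplified) $\ell$-adic representation restricted to the relevant Jordan--Hölder pieces — and since $T_\ell E\otimes\qq_\ell\cong T_\ell E'\otimes\qq_\ell$ as $G_K$-modules, the $\ell$-adic semisimplifications of $E$ and $E'$ coincide, hence so do their reductions $E[\ell]^{ss}\cong E'[\ell]^{ss}$. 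Therefore $\{\chi_1,\chi_2\}$ (with multiplicity) is the same for $E$ and $E'$, and in particular the condition ``$\chi_1^2=\chi_2^2=\cyc$'' holds for $E$ if and only if it holds for $E'$. This gives the Lemma.

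The main obstacle — and the only place requiring care — is the interface between the integral ($\ff_\ell$-coefficient) statement I want and the rational ($\qq_\ell$-coefficient) statement that the isogeny gives me for free: an isogeny of $\ell$-power degree need \emph{not} induce an isomorphism $E[\ell]\cong E'[\ell]$ of $G_K$-modules on the nose, only after inverting $\ell$. The clean fix is to work with semisimplifications and Brauer characters (equivalently, to note that the isomorphism class of $E[\ell]^{ss}$ depends only on $T_\ell E\otimes\qq_\ell$, by Brauer--Nesbitt together with the fact that $\rho_{E,\ell}$ has a $G_K$-stable $\zz_\ell$-lattice coming from $T_\ell E$ whose reduction mod $\ell$ has the prescribed semisimplification independent of the lattice), so the degree of the isogeny is irrelevant. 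Alternatively — and perhaps more in the spirit of the paper — one can reduce to the case of an $\ell$-isogeny $E\to E'$ (factor a general isogeny into prime-degree pieces; primes $\neq\ell$ induce genuine isomorphisms on $E[\ell]$), and for an $\ell$-isogeny with kernel a line $C\subseteq E[\ell]$, note that $E[\ell]$ and $E'[\ell]$ sit in an exact sequence relating them through $C$ and its analog, so they have the same two Jordan--Hölder characters $\{\chi_1,\chi_2\}$, and again the radical condition $\chi_1^2=\chi_2^2=\cyc$ transfers. I would present the argument via this second, more elementary route, spelling out the two characters in the $\ell$-isogeny case and invoking multiplicativity of the degree to reduce to it.
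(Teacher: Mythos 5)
Your proof is correct, and the primary route you take is genuinely different from the paper's. The paper first reduces to the case of a single $\ell$-isogeny (prime-to-$\ell$ isogenies give literal isomorphisms $E[\ell]\cong E'[\ell]$), then works with the explicit isogeny character $\chi$ on $C=\ker(E\to E')$: it introduces the index-$\leq 2$ subgroup $H=\ker\phi$, argues $\chi|_H = \chi_1|_H$ since $H$ has a unique eigenline, concludes $\chi/\chi_1$ is quadratic, and then transfers the radical condition to $E'$ via the dual isogeny character $\chi'=\cyc/\chi$. Your Brauer--Nesbitt argument is more conceptual: you observe that being contained in a radical subgroup is a property of $E[\ell]^{\mathrm{ss}}$ alone (namely, $E[\ell]^{\mathrm{ss}}\cong\chi_1\oplus\chi_2$ with $\chi_1/\chi_2$ a quadratic character), and that $E[\ell]^{\mathrm{ss}}$ depends only on the isomorphism class of $T_\ell E\otimes\qq_\ell$, which is an isogeny invariant; this bypasses the reduction to $\ell$-isogenies entirely. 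Your alternate route via Jordan--H\"older factors of $E[\ell]$ and $E'[\ell]$ under an $\ell$-isogeny is closer in spirit to the paper, though still phrased more invariantly than the paper's explicit eigenline argument.

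One small imprecision worth flagging, which you share with the paper's own write-up: the radical condition is \emph{not} equivalent to ``$\chi_1^2 = \chi_2^2 = \cyc$'' but rather to ``$\chi_1^2 = \chi_2^2$'', i.e.\ $\chi_1/\chi_2$ takes values in $\{\pm1\}$. Since $\chi_1\chi_2=\cyc$, this is equivalent to $\chi_1^4=\cyc^2$, or $\chi_1^2=\epsilon\cdot\cyc$ for some quadratic character $\epsilon$; the stronger claim $\chi_1^2=\cyc$ would force $\chi_1=\chi_2$, i.e.\ a Borel with equal diagonal characters, which is strictly stronger than radical. This does not damage either proof, because the condition ``$\chi_1^2=\chi_2^2$'' is manifestly symmetric in $\chi_1,\chi_2$ and hence transfers across the isogeny once you know the two Jordan--H\"older characters agree; but stating it as $\chi^2=\cyc$ is literally false for radical-but-not-equal-character examples, and is worth correcting.

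Separate from the ``would publish'' question: since the Brauer--Nesbitt route is both shorter and avoids the $H$-eigenline analysis, it is arguably the cleaner argument to record, with the $\ell$-isogeny character-tracking left as a more hands-on alternative.
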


\begin{proof}
If $E$ and $E'$ are isogenous by a prime-to-$\ell$ degree isogeny, then $E[\ell] \simeq E'[\ell]$ as Galois modules.

Therefore we can assume that $E$ and $E'$ are $\ell$-isogenous.  Let $\psi$ denote the isogeny, with kernel $C_1$.  We then have an exact sequence of $G_K$-modules
\[0 \to C_1 \to E[\ell] \xrightarrow{\psi} C \to 0, \]
for a quotient $C$ isomorphic to the kernel of the dual isogeny $\psi^\vee$.
Write $\chi$ for the isogeny character of $\psi$ (giving the action of $G_K$ on $C_1$) 
and $\chi'$ for the isogeny character of $\psi^\vee$ (giving the action of $G_K$ on $C$).

Now assume that $G_E(\ell)$ is contained in a radical subgroup.  Then $G_E(\ell)$ fixes a line $L \subset \ff_\ell^2$ and an index (at most) two subgroup $H \subseteq G_E(\ell)$ preserves an isomorphism $L \simeq E[\ell]/L$.  

If $L=C_1$, then we have that $\chi = \pm \chi'$ and so $G_{E'}(\ell)$ is also radical.  Otherwise, $L \simeq C$ as $G_K$-module, and again we see that $\chi = \pm \chi'$, and so $G_{E'}(\ell)$ is  radical.
\end{proof}

\begin{lem}[(Serre, c.f. Lemma 18' of \cite{serrechebotarev} in the case $K = \qq$)]\label{largeorder}
The inertia subgroup at $\lambda$ for every $\lambda$ above $\ell \geq 5$ in $\O_K$ of $\pp G(\ell)$ contains an element of order at least $\frac{\ell -1}{3[K:\qq]}$.
\end{lem}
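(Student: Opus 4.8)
The plan is to reduce to a purely local question at $\lambda$ and then apply (a ramified‑base‑change version of) Serre's description of the action of inertia on $E[\ell]$.

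First I would pass to the completion $K_\lambda$: an inertia subgroup $I_\lambda \subset G_K$ at $\lambda$ is identified with the inertia subgroup of $G_{K_\lambda}$, and $K_\lambda/\qq_\ell$ has absolute ramification index $e := e(\lambda/\ell) \le e(K/\qq) \le [K:\qq] = d_K$. Since $e \le d_K$, it suffices to produce an element of order at least $(\ell-1)/(3e)$ in the image of $I_\lambda$ under $\rho_{E,\ell}$ in $\PGL_2(\ff_\ell)$. I will freely use that this image is unchanged under twisting $E$ by a character of $G_{K_\lambda}$ and under unramified base change. If $\ell-1 \le 3e$ there is nothing to prove, so I may assume $\ell-1 > 3e$; then $\mu_\ell \not\subset K_\lambda$ (as $\qq_\ell(\mu_\ell)/\qq_\ell$ is totally ramified of degree $\ell-1$), so $\cyc|_{I_\lambda} \ne 1$, and consequently $\rho_{E,\ell}(I_\lambda)$ is not scalar — a scalar image would mean, after the appropriate twist, that $E$ has good reduction at $\lambda$ with unramified $\rho_{E,\ell}$, contradicting that $\det\rho_{E,\ell} = \cyc$ is ramified at $\lambda$.

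Next I would invoke Serre's classification of $\rho_{E,\ell}|_{I_\lambda}$; for $K = \qq$ this is Lemma 18' of \cite{serrechebotarev}, and the general case is obtained by the same argument with the ramification index $e$ carried along. After the allowed twist, $E$ either has potentially multiplicative reduction — then $E$ is a Tate curve over $K_\lambda$ (possibly after the unramified quadratic base change, which leaves $I_\lambda$ alone), $\rho_{E,\ell}|_{I_\lambda}$ is conjugate to $\sm{\cyc}{\Ast}{0}{1}$, and its image in $\PGL_2$ contains the cyclic group $\cyc(I_\lambda)$ of order $[K_\lambda(\mu_\ell):K_\lambda] \ge (\ell-1)/e$ — or it has potentially good reduction, acquiring good reduction over a totally (tamely, since $\ell \ge 5$) ramified extension $L/K_\lambda$ of degree $d \mid 12$, with $d \mid 2$ unless $j(E)$ is congruent to $0$ or $1728$ modulo $\lambda$ (in which cases $d \mid 6$, respectively $d \mid 4$). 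Over $L$ the group scheme $E[\ell]$ is an extension of $\zz/\ell\zz$ by $\mu_\ell$ (ordinary reduction) or has $I_L$ acting through the two level‑$2$ fundamental characters (supersingular reduction); transporting this back along the index‑$d$ inclusion $I_L \subset I_\lambda$, using the constraint $\det\rho_{E,\ell}|_{I_\lambda} = \cyc|_{I_\lambda}$ (image of order $\ge (\ell-1)/e$) together with the fact that a nonzero unipotent part already contributes an element of order $\ell$, one computes that the image of $I_\lambda$ in $\PGL_2(\ff_\ell)$ contains an element of order at least $(\ell-1)/(3e) \ge (\ell-1)/(3d_K)$ in every case.

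The substance of the argument — and the only place the constant $3$ is actually needed — is this last order computation in the potentially good, additive case. One must track the exponents of the fundamental characters after the ramified base change of degree $d \in \{3,4,6\}$ (forcing $j(E) \equiv 0$ or $1728 \pmod\lambda$) and verify that the resulting cyclic subgroup of $\PGL_2(\ff_\ell)$, after being enlarged from the contribution of $I_L$ to that of all of $I_\lambda$ by means of the determinant, has order at least $(\ell-1)/(3e)$; over $\qq$ the extreme case — e.g.\ a curve of Kodaira type $\mathrm{II}$ at $\ell = 13$ whose potential good reduction is ordinary — shows that the constant $3$ cannot be improved. When $j(E)$ equals $0$ or $1728$ one may instead use, as Serre does, the explicit Weierstrass model and its complex multiplication. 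The potentially multiplicative case and the potentially good cases with $d \mid 2$ are comfortably stronger than needed.
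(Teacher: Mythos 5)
Your proposal follows the same blueprint as Serre's Lemma~18' (which the paper cites): localize at $\lambda$, dispose of the scalar case, split into potentially multiplicative vs.\ potentially good reduction, twist freely since the target group is $\PGL_2$, and track the tame fundamental characters through a totally ramified base change, carrying the absolute ramification index $e(\lambda/\ell)\le d_K$ along. That is the right framework, and the potentially multiplicative case is handled correctly. The problem is that the denominator $3$ is the entire content of the lemma relative to the citation: Serre's argument as cited gives $(\ell-1)/(4d_K)$, not $(\ell-1)/(3d_K)$. Your writeup arrives at $3$ by asserting that a fundamental-character bookkeeping exercise ``in every case'' produces an element of order $\ge(\ell-1)/(3e)$, and explicitly defers the verification (``one must track the exponents\dots\ and verify''). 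That deferred step is exactly where the improvement over $4$ must come from, so it cannot be waved through; as written there is a genuine gap, and in the $e=4$ (type $\mathrm{III}/\mathrm{III}^*$, $j\equiv 1728$) case the naive count does \emph{not} beat $(\ell-1)/(4e)$.

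The paper's own route to the better constant is the remark immediately following the lemma, which supplies the specific extra input you are missing. Over $K_\lambda$ one can always twist so that the semistability defect is $\le 4$: types $\mathrm{II}$ and $\mathrm{II}^*$ (defect $6$) twist to $\mathrm{IV}^*$ and $\mathrm{IV}$ (defect $3$), while $\mathrm{III}\leftrightarrow\mathrm{III}^*$ leaves $e=4$. The $e=4$ case is then handled by the observation that the good-reduction extension factors through the ramified quadratic $K_\lambda(\sqrt\pi)$, over which a quadratic twist already has good reduction; this reduces it to an $e\le 2$ computation over an extension of twice the absolute ramification, which still clears the $(\ell-1)/(3d_K)$ bar. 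You mention ``the allowed twist'' but never actually use this mechanism to kill the $e=4$ case, and your list $d\in\{3,4,6\}$ treats all three uniformly. I'd suggest replacing the unproved ``one computes\dots'' with the twist reduction from the paper's remark; your type-$\mathrm{II}$-at-$\ell=13$ sharpness example is plausible (and after twisting to $\mathrm{IV}^*$ it is consistent with $e=3$ being extremal) but would also need to be checked rather than asserted.
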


\begin{proof}
Following the proof of Lemma 18' of \cite{serrechebotarev} naturally gives $\frac{\ell -1}{4[K:\qq]}$, but this can be strengthened slightly.  In fact, in the case of potential good reduction at $\ell$, if $E$ attains good reduction over an extension of ramification index $4$, then over a ramified quadratic extension, a quadratic twist of $E$ has good reduction.  Therefore in that argument, we may assume that $e \leq 3$.
\end{proof}

\begin{cor}\label{radCart_bound}
If $G_{E}(\ell)$ is contained in a radical Cartan subgroup, then
\[\ell \leq 6[K:\qq] +1. \]
\end{cor}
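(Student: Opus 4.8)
The plan is to show that being contained in a radical Cartan subgroup forces the image $\pp G_E(\ell)$ of $G_E(\ell)$ in $\PGL_2(\ff_\ell)$ to have exponent dividing $2$, and then to combine this with Lemma \ref{largeorder}.

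First I would record the elementary fact that if a subgroup of $\GL_2(\ff_\ell)$ is both radical and contained in a Cartan subgroup, then every element $g$ has eigenvalues of the shape $\lambda, \pm\lambda$, so that $g^2$ is a scalar matrix. In the split case this is immediate: a radical subgroup inside the diagonal torus consists of matrices $\sm{\lambda}{0}{0}{\pm\lambda}$. In the nonsplit case, an element $g$ corresponds to $\alpha \in \ff_{\ell^2}^\times$ with eigenvalues $\alpha, \alpha^\ell$, and the radical condition $\alpha^\ell = \pm\alpha$ gives $\alpha^{2(\ell-1)} = 1$, hence $\alpha^2 \in \ff_\ell^\times$ and $g^2$ is scalar. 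Either way every element of $\pp G_E(\ell)$ has order at most $2$. The one point needing care here is checking that the radical and Cartan structures on $G_E(\ell)$ are compatible — that the distinguished line of the radical subgroup is an eigenline of the Cartan action (or, in the nonsplit case, arguing directly with the $\ff_{\ell^2}$-module structure) — but this is a short case analysis.

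Now I would invoke Lemma \ref{largeorder}. We may assume $\ell \ge 5$, since for $\ell \in \{2,3\}$ the bound $\ell \le 6d_K+1$ holds trivially as $d_K \ge 1$. The lemma produces, inside the inertia subgroup at any $\lambda \mid \ell$ of $\pp G_E(\ell)$, an element of order at least $\frac{\ell-1}{3d_K}$. Comparing with the exponent bound $2$ from the previous step gives $\frac{\ell-1}{3d_K}\le 2$, that is, $\ell \le 6d_K + 1$. The main (and only real) obstacle is the compatibility check in the first step; once the statement that every eigenvalue ratio is $\pm 1$ is established, the conclusion is immediate.
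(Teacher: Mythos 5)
Your proof is correct and follows the same route as the paper: radical plus Cartan forces $\pp G_E(\ell)$ to be small (the paper asserts order at most $2$; your exponent-at-most-$2$ version suffices equally well), and then Lemma \ref{largeorder} yields $\frac{\ell-1}{3d_K} \le 2$, i.e.\ $\ell \le 6d_K+1$. One small remark on your nonsplit case: a radical subgroup, as defined in the paper, preserves a line $L \subset \ff_\ell^2$ together with an identification of $L$ with the quotient up to sign, whereas a nonsplit Cartan preserves no line over $\ff_\ell$; so their intersection is just the scalars and $\pp G_E(\ell)$ would be trivial there. Your computation via $\alpha^\ell = \pm\alpha$ is testing an eigenvalue-ratio condition rather than the paper's line-fixing definition of radical — harmless here since it still gives exponent $\le 2$, but not quite the right compatibility check. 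In any case, the only situation in which the paper invokes this corollary (Corollary \ref{finite_primes}, where the curve has two independent $\ell$-isogenies factoring an $\ell^2$-isogeny) is the split one, so only your first case is needed.
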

\begin{proof}
Under these hypotheses, $\pp G(\ell)$ would have order at most $2$.  Hence, by the previous Lemma, we have that $2 \geq \frac{\ell-1}{3[K:\qq]}$ and so, $\ell \leq 6[K:\qq] + 1$.  
\end{proof}

\begin{cor}\label{finite_primes}
If $\ell$ is lift-exceptional for $K$, then $\ell \leq 6 [K:\qq] + 1$.
\end{cor}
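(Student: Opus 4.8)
The plan is to combine the structure theorem for lift-exceptional images (Theorem \ref{upthm}) with the lower bound on the order of inertia at a prime $\lambda\mid\ell$ (Lemma \ref{largeorder}), exploiting that a radical subgroup is nearly trivial once one passes to $\PGL_2$. First, if $\ell=3$ the inequality $\ell\le 6d_K+1$ is automatic, so I may assume $\ell\ge 5$ (this also lets me use Lemma \ref{largeorder} and the odd-$\ell$ theorems). By definition of a lift-exceptional pair, $\C(E)$ has an $\ell$-isogeny and an $\ell^k$-isogeny locally almost everywhere up to isogeny but no global $\ell^k$-isogeny up to isogeny; the cases $k=1,2$ are impossible (the first trivially, the second by the $n=2$ clause of Theorem \ref{upthm}), so $k\ge 3$. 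Theorem \ref{upthm} then furnishes an elliptic curve $E'\sim_K E$ and an integer $m\ge 1$ with $G_{E'}(\ell^{2m+1})\subseteq R(\ell^{2m+1})$. Reducing mod $\ell$: since $m\ge 1$ we have $\ell^{2m}\equiv 0$ and the congruence $r\equiv t\pmod{\ell^{m+1}}$ forces $r\equiv t\pmod{\ell}$, so $R(\ell^{2m+1})$ reduces into a radical subgroup of $\GL_2(\zz/\ell\zz)$ (in agreement with Lemma \ref{iso_inv}). Replacing $E$ by $E'$, I may thus assume $G\colonequals G_{E'}\subset\GL_2(\zz_\ell)$ has $G(\ell)$ contained in a radical subgroup.

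The point is that radical subgroups are almost trivial modulo scalars. In a basis adapted to the fixed line a radical subgroup consists of matrices $\sm{\chi_1}{*}{0}{\pm\chi_1}$, so $\pp G(\ell)$ is contained in the subgroup $\big\{\sm{1}{*}{0}{\pm1}\big\}\cong\ff_\ell\rtimes\{\pm1\}$ of $\PGL_2(\ff_\ell)$. Every element of that group has order $1$, $2$, or $\ell$: the element $\sm{1}{c}{0}{1}$ has order $\ell$ when $c\ne 0$ and order $1$ otherwise, while $\sm{1}{c}{0}{-1}$ has order $2$. In particular, any element of $\pp G(\ell)$ whose order is prime to $\ell$ has order at most $2$.

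Now fix a prime $\lambda\mid\ell$ of $K$. By Lemma \ref{largeorder} the image of inertia at $\lambda$ in $\pp G(\ell)$ contains an element $g$ of order at least $\tfrac{\ell-1}{3d_K}$; moreover $g$ is the image of a topological generator of the tame quotient of $I_\lambda$ (this is exactly what Serre's argument underlying Lemma \ref{largeorder} produces, cf.\ the remark following it), so $\ord(g)$ divides $\ell^2-1$ and is in particular prime to $\ell$. By the previous paragraph $\ord(g)\le 2$, whence $\tfrac{\ell-1}{3d_K}\le 2$, i.e.\ $\ell\le 6d_K+1$, proving Corollary \ref{finite_primes}.

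The one step I expect to require care is the last: one genuinely needs the element produced by Lemma \ref{largeorder} to be of order prime to $\ell$, not merely of large order. If one only knew $\ord(g)\ge\tfrac{\ell-1}{3d_K}$ and $\ell>6d_K+1$, then $g$ would have projective order $\ell$, i.e.\ $\rho_{E',\ell}(I_\lambda)$ would contain a nontrivial unipotent while $G(\ell)$ is radical; one would then have to rule this out by examining the reduction type of $E'$ at $\lambda$ — in the (potentially) multiplicative and good ordinary cases the semisimplification of $\rho_{E',\ell}|_{I_\lambda}$ has diagonal characters $\{1,\cyc|_{I_\lambda}\}$ (up to a quadratic twist), which being radical differ only by a sign, forcing $\cyc|_{I_\lambda}$ to be $\{\pm1\}$-valued and hence $\ell\le 2d_K+1$; the remaining (potentially good) cases push the image of inertia into a Cartan, whose intersection with a radical subgroup is scalar. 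Either route lands well inside the stated bound, but invoking the tame-inertia refinement of Lemma \ref{largeorder} removes the case analysis.
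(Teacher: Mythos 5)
Your proof is correct, but it takes a genuinely different route from the paper's, and the difference is worth noting. The paper also starts from the observation that some $E'$ in the isogeny class has $G_{E'}(\ell)$ radical, but it additionally exploits the Borel-mod-$\ell^2$ structure guaranteed by Theorem~\ref{upthm}: factoring the $\ell^2$-isogeny produces a further $\ell$-isogenous curve whose mod-$\ell$ image lies in a split Cartan, and by Lemma~\ref{iso_inv} that curve is still radical mod $\ell$. Radical intersected with Cartan is $\bigl\{\sm{\chi_1}{0}{0}{\pm\chi_1}\bigr\}$, so $\pp G(\ell)$ has order at most $2$, and \emph{every} element — in particular the one from Lemma~\ref{largeorder} — has small order; the bound drops out with no case analysis (this is Corollary~\ref{radCart_bound}). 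You instead work with the radical structure alone, where $\pp G(\ell)\subset\ff_\ell\rtimes\{\pm1\}$ still contains elements of order $\ell$, and so you must separately rule out the inertia element from Lemma~\ref{largeorder} being unipotent. You do this by appealing to the fact (true, and implicit in Serre's proof, but not in the \emph{statement} of Lemma~\ref{largeorder} as recorded in the paper) that the large-order element is a tame-inertia generator and hence of order prime to $\ell$; your fall-back argument via reduction types is also sound and lands within the required bound. Both strategies combine Theorem~\ref{upthm} with Lemma~\ref{largeorder}, but the paper's extra step of passing to the Cartan-and-radical curve buys it the luxury of not needing any refinement of Lemma~\ref{largeorder}, whereas your argument trades that step for a sharper reading of what Serre's lemma actually delivers. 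Either way the inequality $\tfrac{\ell-1}{3d_K}\le 2$ results.
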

\begin{proof}
From the classification of lift-exceptional subgroups, there exists an elliptic curve $E/K$ with $G_{E}(\ell^{2m+1}) \subseteq R(\ell^{2m+1})$ for $m\geq 1$.  Therefore $G_{E}(\ell)$ is necessarily radical and $G_{E}(\ell^2)$ is necessarily Borel.  Hence $E$ is isogenous to a curve which has two independent $\ell$-isogenies (factoring the $\ell^2$-isogeny).  But by Lemma \ref{iso_inv}, such a curve is also radical modulo $\ell$.  The result now follows from Corollary \ref{radCart_bound}.
\end{proof}

\section{Finiteness of exceptional $j$-invariants}\label{jinvs}

In this section, we prove the finiteness results in Theorem \ref{mainthm}.  Recall the notation
\[\Sigma(K, N) \colonequals \{ j \in K : \text{$j = j(E/K)$ and $\C(E)$ fails $\bs_N$} \}, \qquad \Lambda(K) \colonequals \bigcup_{(N,70)=1} \Sigma(K,N). \] 
Corollary \ref{finite_primes} guarantees that for any number field $K$, there are only finitely many lift-exceptional primes $\ell$.  Define also the set of lift-exceptional $j$-invariants
\[\Sigma^+(K, \ell) \colonequals \{j \in K : \text{$j = j(E/K)$, $\exists k$ such that $(\ell^k, j)$ is a lift-exceptional pair} \}.\]

\begin{prop}\label{finitej}
For a fixed odd prime $\ell$, $\Sigma^+(K, \ell)$ is finite.
\end{prop}

\begin{proof}
If $(\ell^k, j(E))$ is a lift-exceptional pair, then for some $E'/K$ isogenous to $E$, $G_{E'}(\ell^3) \subseteq R(\ell^3)$ or $\C(E)$ has an $\ell^4$-isogeny.  In both cases, the modular curve $X_{R(\ell^3)}$ and $X_0(\ell^4)$ are genus at least $2$ for $\ell \geq 3$.   
\end{proof}

Having addressed the problem of lift-exceptional pairs, we now apply this to say something about exceptional pairs, where we make no assumption that $E$ has an $\ell$-isogeny.
If $\C(E)$ locally almost everywhere has an $\ell^n$-isogeny, then it may fail to have an $\ell^n$-isogeny globally because it fails to have an $\ell$-isogeny.  This could be the case whenever $\ell$ is an exceptional prime (in the sense of \cite{sutherland}) for the number field $K$.  The following theorem of Anni, building on work of Sutherland, bounds such examples:

\begin{thm}[(Anni, Thm 4.3, Cor 4.5, Thm 5.3 of \cite{anni})]\label{anni_bound}
Let $E/K$ be an elliptic curve with an $\ell$-isogeny locally almost everywhere.  Write $\disc_K$ for the discriminant of $K$ over $\qq$.  Then $E$ has an $\ell$-isogeny globally or $\ell \leq \max(\disc_K, 6[K:\qq]+1)$.  Further, for fixed $\ell$, there are finitely many exceptional pairs $(\ell, j(E))$ unless $\ell=5,7$.
\end{thm}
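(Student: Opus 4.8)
The statement is due to Sutherland and Anni, and the plan is to follow their approach: convert the local hypothesis into a constraint on the mod $\ell$ image of Galois, classify the subgroups that can arise, and then extract the prime bound and the finiteness of $j$-invariants from the geometry of the associated modular curves.

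First I would apply the Chebotarev argument of Section \ref{prelim}: having an $\ell$-isogeny locally almost everywhere is equivalent to every element of $G \colonequals \Im(\rho_{E,\ell})$ having an eigenvector over $\ff_\ell$, while having a global $\ell$-isogeny is equivalent to $G$ stabilizing a line. So one must classify subgroups $G \subseteq \GL_2(\ff_\ell)$ such that every element has a rational eigenvalue, $G$ stabilizes no line, and $\det(G)$ equals the image of the mod $\ell$ cyclotomic character $\Gal(K(\mu_\ell)/K) \subseteq \ff_\ell^\times$. Dickson's classification of subgroups of $\GL_2(\ff_\ell)$ separates this into cases. If the projective image $\pp G$ contains $\mathrm{PSL}_2(\ff_\ell)$, then $G$ contains a nonsplit-Cartan element of order $\ell+1 > 2$, which has irreducible characteristic polynomial and hence no rational eigenvalue: this case is impossible for $\ell \geq 5$. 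If $G$ lies in the normalizer of a nonsplit Cartan $C_{\ns}$, then every non-scalar element of $G \cap C_{\ns}$ again lacks a rational eigenvalue, so $G \cap C_{\ns}$ is scalar and $\pp G$ has order at most $2$; combined with Lemma \ref{largeorder} (the large-order inertia element) this forces $\ell \leq 6d_K + 1$. The exotic cases, where $\pp G$ is $A_4$, $S_4$, or $A_5$, occur only for finitely many $\ell$ and are eliminated or bounded by inspection.

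This leaves the principal case: $G$ is contained in the normalizer of a split Cartan $C_s$ but not in the Borel containing it, so $G$ properly contains $G_0 \colonequals G \cap C_s$ with index $2$, and unless $G_0$ is scalar (a case handled as above) $G_0$ stabilizes exactly the two lines fixed by $C_s$, neither of which is $G$-stable. The elements of the nontrivial coset are antidiagonal, with characteristic polynomial of the form $x^2 - u$, so they have a rational eigenvalue if and only if $-\det$ is a square in $\ff_\ell$. Tracking this against $\det\circ\rho_{E,\ell} = \cyc$, one finds that either the image of the cyclotomic character is contained in the squares — which puts the quadratic subfield $\qq(\sqrt{\ell^{*}})$, $\ell^{*} = (-1)^{(\ell-1)/2}\ell$, of $\qq(\mu_\ell)$ inside $K$, so $\ell \mid \Delta_K$ and $\ell \leq \Delta_K$, exactly as in Corollary \ref{sqrt} — or else $G_0$ is cut out by a quadratic character over whose fixed field $E$ acquires a global $\ell$-isogeny, as in Corollary \ref{quadextn}; in the remaining subcases $\pp G_0$ is small and Lemma \ref{largeorder} again yields $\ell \leq 6d_K + 1$. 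Assembling all cases gives $\ell \leq \max(\Delta_K, 6d_K + 1)$.

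Finally, for the finiteness of $j$-invariants with $\ell$ fixed: the analysis above shows that an exceptional $E$ with $j(E) \notin \{0, 1728\}$ gives a non-cuspidal $K$-point on $X_H$, where $H$ runs over finitely many explicit subgroups of $\GL_2(\ff_\ell)$ (essentially subgroups of a split-Cartan normalizer satisfying the determinant condition, together with their transposes). Computing the genus of $X_H$ from the description $\Gamma_{\bar H}\backslash\hh^*$ recalled in Section \ref{prelim}, one checks that it exceeds $1$ for every relevant $\ell$ other than $5$ and $7$, so Faltings' theorem gives finitely many such points and hence finitely many exceptional $j$; for $\ell = 5$ the relevant curve has genus $0$ and for $\ell = 7$ genus $1$, and either can carry infinitely many $K$-points, which is why these primes are excluded. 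The finitely many curves with $j \in \{0,1728\}$ have complex multiplication and are dispatched directly. The main obstacle throughout is the group theory at small $\ell$ — enumerating the exotic subgroups and, above all, pinning down the exact subgroup $H_\ell$ in the split-Cartan-normalizer case precisely enough to compute the genus of $X_{H_\ell}$ and to locate the cutoff at $\ell = 5, 7$; this delicate case analysis, carried out in \cite{sutherland} and \cite{anni}, is the heart of the argument.
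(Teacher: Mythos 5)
The statement you are proving is cited by the paper from Anni (Thm 4.3, Cor 4.5, Thm 5.3) and from Sutherland's earlier work; the paper gives no independent proof, so the comparison is against that cited argument. Your reconstruction is essentially the Sutherland--Anni proof: translate the local condition via Chebotarev into ``every element of $G$ has a rational eigenvector,'' run Dickson's classification of subgroups of $\GL_2(\ff_\ell)$ to reduce to the normalizer of a split Cartan, extract the prime bound from the cyclotomic/discriminant constraint and the large-inertia lemma, and conclude finiteness of $j$-invariants by computing the genus of $X_H$ and invoking Faltings, with $\ell=5,7$ excepted because the relevant curves have genus $0$ and $1$.

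The one place where you are imprecise is the disposal of the exotic projective images $A_4$, $S_4$, $A_5$. It is not the case that these ``occur only for finitely many $\ell$'' as abstract subgroups: e.g.\ for $\ell\equiv 1\pmod{12}$ one can realize $2.A_4\subset\SL_2(\ff_\ell)$ with every element split, so the group-theoretic existence is not the obstruction. What actually kills these cases in the Galois setting is precisely the pair of constraints you invoke elsewhere --- the determinant of $G$ must equal the image of the cyclotomic character (so a small $\det G$ forces a large degree $[K\cap\qq(\zeta_\ell):\qq]$ and hence a bound on $\ell$), and Lemma~\ref{largeorder} forces a bounded projective image to yield $\ell\leq O(d_K)$. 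If you reorganize the exotic case so that it is routed through those two constraints (rather than a group-theoretic exclusion), the argument closes correctly and matches the cited proof. The rest --- in particular the split-Cartan-normalizer analysis, the role of $\qq(\sqrt{\ell^*})\subset K$, the quadratic extension over which the isogeny appears, and the genus computation identifying $\ell=5,7$ as the exceptions --- is accurate and faithful to Sutherland and Anni.
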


Using this, we can now prove part \eqref{mainthm_2} of Theorem \ref{mainthm}.  
\begin{proof}[of Theorem \ref{mainthm}\eqref{mainthm_2}]
Fix a number field $K$.  We want to show that there are only finitely many exceptional $j$-invariants as $N$ ranges over all integers coprime to $70$.  Let $S_K := \{3, 11, \cdots \ell_m\}$ be the list of primes at most $\max(\disc_K, 6[K:\qq]+1)$ coprime to $2,5$, and $7$, primes for which there can be infinitely many exceptions, as shown in Theorem \ref{anni_bound} and Proposition \ref{2exceptions}.  Any exceptions must ``come from these primes", i.e. for any exceptional pair $(N, j(E))$, there exists a prime $\ell \in S_K$ and $\ell^n \mid\mid N$ such that $(\ell^n, j(E))$ is exceptional.  We may therefore assume that $N$ is a power of a prime in $S_K$, as all possible exceptional $j(E)$ are already realized with these primes alone.

For any $\ell \neq 5,7$, the set $\Sigma(K, \ell)$ is finite by Anni's result Theorem \ref{anni_bound}.  By Proposition \ref{finitej}, the set $\Sigma^+(K, \ell)$ is also finite for $\ell \neq 2$.  We then have
\[ \Lambda(K) = \bigcup_{i=1}^m \Sigma(K, \ell_i) \cup \Sigma^+(K, \ell_i).\]
is a finite union of finite sets, and is hence also finite.  
\end{proof}

To prove part \eqref{mainthm_1} of Theorem \ref{mainthm} we need to more deeply analyze counterexamples arising from the failure of the local-global principle for $5$- and $7$-isogenies, since there exist number fields where these occur infinitely often.  Recall the following result of Anni and Banwait--Cremona.

\begin{prop}[(Prop 1.3 of \cite{bc} and Prop 3.8 of \cite{anni}).] \hfill

\begin{enumerate}
\item If $(5, j(E))$ is an exceptional pair for $K$ then $\sqrt{5} \in K$ and 
\begin{itemize}
\item $\pp G(5) \simeq D_{4}$, the Klein four group, and $G(5)$ is contained in the normalizer of a split Cartan.
\end{itemize}
\item If $(7, j(E))$ is an exceptional pair for $K$ then $\sqrt{-7} \not\in K$ and
\begin{itemize}
\item $\pp G(7) \simeq D_6$, the Dihedral group of order $6$, and $G(7)$ is contained in the normalizer of a split Cartan,
\item $E$ admits a $7$-isogeny over $K(\sqrt{-7})$.
\end{itemize}
\end{enumerate}
\end{prop}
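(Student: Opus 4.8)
Both assertions are purely statements about the mod-$\ell$ image $G=\Im\rho_{E,\ell}\subseteq\GL_2(\ff_\ell)$: by the group-theoretic translation of Section~\ref{prelim} (and since for prime degree having an $\ell$-isogeny is an isogeny invariant), the pair $(\ell,j(E))$ is exceptional exactly when every element of $G$ fixes a line in $\ff_\ell^2$ while $G$ itself fixes no line --- equivalently, $G$ is irreducible but each of its elements has an eigenvalue in $\ff_\ell$. The plan is to run through Dickson's classification of the subgroups of $\PGL_2(\ff_\ell)$ for the projective image $\pp G$, show that only the dihedral case can occur, and then extract the refinements from the identity $\det\circ\rho_{E,\ell}=\cyc_\ell$ coming from the Weil pairing.

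The first step is to dispose of most of Dickson's cases. If $\pp G$ acts transitively on $\pp^1(\ff_\ell)$, then Jordan's theorem produces a fixed-point-free element of $G$, contradicting the hypothesis; this immediately kills the cases $\pp G\supseteq\mathrm{PSL}_2(\ff_\ell)$ and, for $\ell=5$, $\pp G\cong A_5\cong\mathrm{PSL}_2(\ff_5)$. If $\pp G$ is cyclic then $G$ is abelian, hence simultaneously triangularizable over $\overline{\ff}_\ell$; since every element has an $\ff_\ell$-rational eigenvalue it is in fact triangularizable over $\ff_\ell$, so $G$ fixes a line and is not exceptional. The normalizer of a \emph{nonsplit} Cartan $C_{ns}$ is excluded because nonscalar elements of $C_{ns}$ fix no line, forcing $G\cap C_{ns}$ to be scalar; then $G$ is generated by scalars together with a single trace-$0$ element which, once it has an $\ff_\ell$-eigenvalue, is diagonalizable and so lies in a split Cartan --- making $G$ reducible. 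The cases $\pp G\cong A_4$ or $S_4$ (for both $\ell=5$ and $\ell=7$) require a short but genuine case check on the orbits of $\pp G$ in $\pp^1(\ff_\ell)$ ($6$ points for $\ell=5$, $8$ points for $\ell=7$): in every admissible orbit partition some element --- a $4$-cycle for $S_4$, a double transposition for $A_4$ --- is forced to act without a fixed point, again contradicting the hypothesis.

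What remains is $\pp G$ dihedral with $G$ contained in the normalizer $N(C)$ of a split Cartan $C$ but $G\not\subseteq C$ (otherwise $G$ fixes a line). Work in a basis making $C$ diagonal, so that $G\cap C$ consists of matrices $\mathrm{diag}(a,b)$ and $G\setminus C$ of antidiagonal matrices $g_0=\sm{0}{a}{b}{0}$, with $\det g_0=-ab$ and characteristic polynomial $\lambda^2-ab$. The hypothesis is automatic on $C$, and on the nontrivial coset it says precisely $-\det g_0\in(\ff_\ell^\times)^2$ for every $g_0\in G\setminus C$. Unwinding this with the Legendre symbol forces $\det(G\cap C)\subseteq(\ff_\ell^\times)^2$; for $\ell=5$, where $-1$ is a square, it moreover forces $\det(G)\subseteq(\ff_5^\times)^2$, so $\cyc_5$ has square image and $\sqrt 5\in K$. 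For $\ell=7$, where $-1$ is not a square, the same computation identifies the quadratic character $\psi\colon G\to\{\pm1\}$ measuring the interchange of the two $C$-stable lines with $g\mapsto\bigl(\tfrac{\det g}{7}\bigr)=\bigl(\tfrac{\cyc_7(g)}{7}\bigr)$, i.e.\ with the character cutting out $K(\sqrt{-7})/K$; since $G\not\subseteq C$ this character is nontrivial, so $\sqrt{-7}\notin K$, and over its fixed field $K(\sqrt{-7})$ the image lands in $C$, producing the claimed $7$-isogeny. Finally $\pp(G\cap C)$ is the image of the character $\mathrm{diag}(a,b)\mapsto b/a=\det/a^2$, which lands in $(\ff_\ell^\times)^2$ by the above, so its order $d$ divides $(\ell-1)/2$; and $d\ge 2$ because $G$ is not contained in a Borel. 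For $\ell=5$ this forces $d=2$, so $\pp G$ is the Klein four-group; for $\ell=7$ it forces $d=3$, so $\pp G$ is dihedral of order $6$.

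The main obstacle is the elimination of the exceptional projective images $A_4$ and $S_4$ (and, to a lesser extent, of the nonsplit Cartan normalizer): unlike the rest of the argument, which is bookkeeping with quadratic residues and the determinant–cyclotomic identity, that step needs concrete information about the permutation actions of these groups on $\pp^1(\ff_5)$ and $\pp^1(\ff_7)$ in order to locate a derangement in each orbit configuration.
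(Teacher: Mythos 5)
The paper does not prove this proposition; it quotes it verbatim as Prop.~1.3 of Banwait--Cremona and Prop.~3.8 of Anni, so there is no ``paper's own proof'' to compare against. Your plan reconstructs the argument of those sources (which in turn follows Sutherland's general analysis for prime $\ell$): reduce to the statement that $G$ is irreducible but every element has an $\ff_\ell$-rational eigenvalue, run Dickson's classification of $\pp G$, eliminate everything but the split-Cartan-normalizer case, and then exploit $\det \circ \rho_{E,\ell} = \cyc_\ell$ to get the arithmetic conditions on $K$ and the structure of $\pp G$. That is exactly the standard route, and the bookkeeping on the dihedral side --- the Legendre-symbol computation forcing $\det(G\cap C) \subseteq (\ff_\ell^\times)^2$, the identification of the normalizer character with $K(\sqrt{-7})/K$ when $\ell=7$, and the bound $d \mid (\ell-1)/2$, $d\geq 2$ giving $D_4$ resp.\ $D_6$ --- is all correct.

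One small inaccuracy in the part you flag as the main obstacle (eliminating $A_4$, $S_4$): you say a double transposition is the derangement to exhibit in $A_4$. That is right for $\ell=7$ --- since $A_4$ has no index-$2$ subgroup it sits inside $\PSL_2(\ff_7)$, where involutions have square determinant, hence $-\det$ a nonsquare (as $-1$ is a nonsquare mod $7$), hence irreducible characteristic polynomial and no fixed point on $\pp^1(\ff_7)$. But for $\ell=5$ the situation is reversed: $-1$ \emph{is} a square mod $5$, so involutions of $A_4\subset\PSL_2(\ff_5)$ fix two points. The derangements you want for $\ell=5$ are the $3$-cycles, since $3 \nmid 5-1$ but $3\mid 5+1$, so order-$3$ elements of $\PGL_2(\ff_5)$ are of nonsplit type and act freely on $\pp^1(\ff_5)$. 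With that element swap the case check goes through (and it also disposes of $S_4$ and $A_5$ for $\ell=5$, since they contain $3$-cycles; for $\ell=7$ the $4$-cycles in $S_4$ are derangements because $4\nmid 6$). So the plan is correct modulo this fix, and it is not a genuinely different approach from the cited references.
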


We can explicitly describe the (maximal) exceptional subgroups in these cases.  Let $\alpha$ be a generator of $\ff_\ell^\times$ for $\ell = 5,7$.  Then
\[ G(\ell) \subseteq H_{\ell, \exc} \colonequals \left\{ \begin{pmatrix} \alpha^i & 0 \\ 0 & \alpha^j \end{pmatrix}, \begin{pmatrix} 0 & \alpha^i   \\  \alpha^j & 0 \end{pmatrix} \right\}_{i \equiv j \pmod{2}} \subseteq \GL_2(\zz/\ell\zz). \]

Define $H_{\ell^n, \exc}$ to be the full preimage of $H_{\ell, \exc}$ in $\GL_2(\zz/\ell^n\zz)$.  Then we have

\begin{prop}\label{vert57}
Let $\ell = 5, 7$.
\begin{enumerate}
\item Every element of $H_{\ell^2, \exc}$ has square discriminant.
\item If $G \subseteq H_{\ell^3, \exc}$ satisfies the property that $\pp G(\ell) = \pp H_{\ell, \exc} = D_4$ or $D_6$ and every element has square discriminant, then $\genus(X_G) \geq 2$.
\end{enumerate}
\end{prop}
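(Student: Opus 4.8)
For part (1), I would argue by direct computation. Any $g \in H_{\ell^2,\exc}$ reduces modulo $\ell$ to an element of $H_{\ell,\exc}$, so it is either of the form $\sm{a}{\ell b}{\ell c}{d}$ (reducing to a diagonal element) or $\sm{\ell e}{b}{c}{\ell f}$ (reducing to an anti-diagonal element). In the first case $\Delta(g) = (a+d)^2 - 4(ad - \ell^2 bc) \equiv (a-d)^2 \pmod{\ell^2}$, a perfect square. In the second case $\Delta(g) \equiv 4bc \pmod{\ell^2}$; here $b,c$ are units with $bc \equiv \alpha^{i+j} \pmod\ell$ and $i \equiv j \pmod 2$, so $bc$ is a quadratic residue mod $\ell$, hence --- since $\ell$ is odd --- a square mod $\ell^2$ by Hensel's lemma, and therefore so is $4bc$.

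For part (2), the plan is: (i) translate the square-discriminant hypothesis into a constraint on a single $H_{\ell,\exc}$-stable subspace $V \subseteq \Mat_2(\ff_\ell)$, (ii) classify the possibilities for $V$ to obtain a short list of candidate groups $G$, and (iii) compute the genus of the corresponding modular curves for $\ell = 5$ and $\ell = 7$. For (i): set $G_1 = G \cap \ker(\GL_2(\zz/\ell^3\zz) \to \GL_2(\ff_\ell))$, a normal subgroup of $G$, and write its elements as $I + \ell N$ with $N \in \Mat_2(\zz/\ell^2\zz)$. A direct computation gives $\Delta(I + \ell N) \equiv \ell^2\big(\tr(\bar N)^2 - 4\det(\bar N)\big) \pmod{\ell^3}$, where $\bar N = N \bmod \ell$, so this is a square mod $\ell^3$ exactly when $\Delta(\bar N) := \tr(\bar N)^2 - 4\det(\bar N)$ is a square or $0$ in $\ff_\ell$. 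The assignment $I + \ell N \mapsto \bar N$ is a group homomorphism $G_1 \to (\Mat_2(\ff_\ell),+)$ whose image $V$ is therefore an $\ff_\ell$-subspace, stable under the adjoint action of $G(\ell) = H_{\ell,\exc}$. One then checks that every $g \in G$ that is \emph{not} scalar mod $\ell$ automatically has square discriminant mod $\ell^3$ (writing $\Delta(g)$ as $(a-d)^2$ or $4bc$ times a unit congruent to $1 \bmod \ell^2$, which is a square since $\ell$ is odd). Hence the hypothesis on $G$ is equivalent to: $\Delta$ takes only squares and $0$ on $V$.

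For (ii): decompose $\Mat_2(\ff_\ell) = \ff_\ell I \oplus \mathfrak{sl}_2$ and enumerate the $H_{\ell,\exc}$-stable subspaces using the weight decomposition of $\mathfrak{sl}_2$ under the diagonal torus of $H_{\ell,\exc}$ together with the Weyl involution coming from its anti-diagonal elements (a short finite check, with one extra family of \emph{twisted Cartan} subspaces appearing when $-1$ is a square mod $\ell$, i.e.\ for $\ell = 5$). On any stable subspace meeting the off-diagonal part, $\Delta$ restricts to a form $4qr$ (possibly plus $(p-s)^2$), which represents a nonsquare; on a subspace contained in the Lie algebra of a maximal torus, $\Delta$ is $(p-s)^2$ up to a fixed square, hence takes only squares. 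So $V$ must lie in a Cartan subalgebra; consequently $G_1$ is simultaneously diagonalizable mod $\ell^2$, one gets $[H_{\ell^3,\exc} : G(\ell^3)] \ge \ell^2$, and $G(\ell^3)$ is conjugate into one of finitely many explicit groups (indexed by the choice of $V$ and the extension class of $1 \to V \to G(\ell^2) \to H_{\ell,\exc} \to 1$; note $G(\ell^2)$ need \emph{not} lie in the normalizer of a Cartan mod $\ell^2$ --- there can be elements with both a nonzero diagonal and a nonzero off-diagonal reduction --- which is what prevents a one-line genus argument). For (iii): for $\ell = 5, 7$ and each maximal candidate $G$, compute $\genus(X_G)$ from $X_G^\circ(\cc) \simeq \Gamma_{\bar G}\backslash\hh^*$ via the index $[\SL_2(\zz):\Gamma_{\bar G}]$, the cusp count, the elliptic points, and the genus formula (a finite, machine-assisted verification in the spirit of Section \ref{two}), and check that $\genus(X_G) \ge 2$. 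Then for an arbitrary $G$ satisfying the hypotheses, $G(\ell^3)$ sits inside one of these maximal groups, so $X_G \to X_{G_{\max}}$ is a nonconstant morphism of curves and $\genus(X_G) \ge \genus(X_{G_{\max}}) \ge 2$.

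The step I expect to be the main obstacle is (iii). Since $H_{\ell^3,\exc}$ is the full preimage of $H_{\ell,\exc}$, we have $X_{H_{\ell^3,\exc}} = X_{H_{\ell,\exc}}$, which has genus $\le 1$ for $\ell = 5, 7$; the content of the proposition is precisely that the index-$\ge \ell^2$ constraint forced by $V$ lying in a Cartan subalgebra always raises the genus to at least $2$, rather than merely to $1$. Establishing this uniformly requires keeping careful track of how the cover $X_G \to X_{H_{\ell,\exc}}$ ramifies over the cusps and the elliptic points of $X_{H_{\ell,\exc}}$, which is where the real work lies.
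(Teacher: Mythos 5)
Your part~(1) computation is correct and is essentially what the paper omits as ``an easy calculation.''

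For part~(2), your strategy coincides with the paper's for $\ell = 5$: pass to the kernel of reduction mod $\ell$, observe that the constraint lives on an $H_{\ell,\exc}$-stable $\ff_\ell$-subspace $V$ of $\Mat_2(\ff_\ell)$, use the character decomposition of $\Mat_2(\ff_\ell)$ under $\pp H_{\ell,\exc}$ to pin $V$ down to (sub)spaces lying in a Cartan subalgebra, and then verify by machine that the resulting modular curves have genus $\geq 2$. The paper writes $\Mat_2(\ff_5) = A \oplus B \oplus C \oplus D$ as a $D_4$-representation, computes $\Delta\sm{a+b}{c+d}{c-d}{a-b} = 4(b^2+c^2-d^2)$, and concludes $V \subseteq A\oplus B$, $A\oplus C$, or $A\oplus D$ --- this is exactly your step~(ii), though you should be careful with the phrasing ``on any stable subspace meeting the off-diagonal part, $\Delta$ represents a nonsquare'': for $\ell=5$ the nonsplit Cartan $A\oplus D$ does meet the off-diagonal part but has $\Delta = -4d^2$, a square precisely because $-1$ is a QR mod $5$; the dichotomy you want is ``$V$ lies/does not lie in a Cartan subalgebra,'' not ``meets/does not meet the off-diagonal.'' Where you genuinely diverge from the paper is at $\ell = 7$. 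The paper does not redo the representation theory there (where $\pp H_{7,\exc} \simeq S_3$ and the off-diagonal part is an irreducible $2$-dimensional summand, so there is no analogue of the $C$, $D$ splitting). Instead it uses the structural fact, from Banwait--Cremona, that any $E$ realizing such a $G$ acquires a rational $7$-isogeny over the quadratic extension $K(\sqrt{-7})$, i.e.\ the index-$2$ Cartan subgroup of $G(7)$ corresponds to a quadratic base change. Over that extension the image is Borel mod $7$, so by Theorem~\ref{upthm} the curve produces a $K(\sqrt{-7})$-point on $X_0(7^3)$ or $X_{R(7^3)}$, both already known to have genus $\geq 2$. That sidesteps the enumeration of extension classes $1 \to V \to G(\ell^2) \to H_{\ell,\exc} \to 1$ entirely. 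Your uniform approach should also work, but it is more labor, and --- as you acknowledge --- step~(iii), the actual genus verification over the finite list of candidate $G(\ell^3)$'s, is left undone; in the paper that step is a short Magma computation for $\ell=5$ and a lookup for $\ell=7$. One thing worth noting: you are more careful than the paper about the extension class. The paper asserts $G(25) = I \cdot \tilde H_{5,\exc}$ without fully justifying that the extension splits compatibly with the chosen Teichm\"uller lift; your remark that the extension class must be tracked (and that $G(\ell^2)$ need not lie in the normalizer of a Cartan) correctly flags this.
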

\begin{proof}
Part (1) is an easy calculation.  For part (2), if $\ell = 7$ this is also easy.  As $E$ always has an $\ell$-isogeny over $K(\sqrt{-7})$, the pair $(7^3, j(E))$ either gives rise to a $K(\sqrt{-7})$ point on $X_0(7^3)$ or $X_{R(7^3)}$.  Both of these modular curves have genus $\geq 2$.

If $\ell = 5$, we use the following elementary observations from representation theory.  We represent $D_4 = \{e, x, y ,xy\}$ with $x^2 = y^2 = e$.  The Klein four group $D_4$ has character table:

\begin{center}
\begin{tabular}{c | c | c | c | c}
& $e$ & $x$ & $y$ & $xy$ \\ \hline
$A$ & 1 & 1 & 1 &1 \\ \hline
$B$ & 1 & -1 & -1 &1 \\ \hline
$C$ & 1 & 1 & -1 &-1 \\ \hline
$D$ & 1 & -1 & 1 &-1 
\end{tabular}
\end{center}

Let $V:= M_2(\ff_5)$ be the $4$-dimensional vector space of $2 \times 2$ matrices over $\ff_5$.  We have an action of $D_4 = \pp H_{5, \exc}$ on $V$ by conjugation.  Using character theory, it is easy to show that
\[V = A \oplus B \oplus C \oplus D, \]
as a representation of $D_4$.  We explicitly write this as
\[A = \begin{pmatrix} a & 0 \\0 & a \end{pmatrix}, \qquad B = \begin{pmatrix} b & 0 \\0 & -b \end{pmatrix}, \qquad C = \begin{pmatrix} 0 & c \\ c & 0 \end{pmatrix}, \qquad D = \begin{pmatrix} 0 & d \\-d & 0 \end{pmatrix}. \]

Note that we have a lift of $H_{5, \exc}$ to $\GL_2(\zz/5^n\zz)$ for any $n$ by replacing $\alpha$ with $\widetilde{\alpha} \in (\zz/5^n\zz)^\times$ of order $4$ lifting $\alpha$.  We will call this lift $\widetilde{H}_{5, \exc}$.

Using this, we claim that it suffices to show that if
$G \subset \GL_2(\zz/5^3\zz)$ is such that $\pp G(5)$ equals $\pp H_{5, \exc}$ and every element of $G$ has square discriminant, then 
\[G(25) \subseteq I \cdot \tilde{H}_{5, \exc}, \]
where $I \subset \GL_2(\zz/25\zz)$ is a subgroup of $1 + A \oplus B$, or  $1 + A \oplus C$, or $1 + A \oplus D$. Indeed, in these cases an easy Magma computation shows that these groups define modular curves with genus at least 2.

To prove the claim, first notice that every element of $G$ can be written as either
\[ g_1 = \begin{pmatrix} \alpha^i + 5x & 5y \\ 5z & \alpha^j + 5w \end{pmatrix}, \qquad \text{or} \qquad g_2 = \begin{pmatrix} 5x & \alpha^i + 5y \\ \alpha^i + 5z & 5w \end{pmatrix}. \]
We can calculate
\[\Delta(g_1) = (\alpha^i - \alpha^j) \Big( (\alpha^i-\alpha^j) + 2 \cdot 5 (x-w) \Big) + 5^2((x-w)^2 + 4 yz),\]
\[\Delta(g_2) \equiv  4 \alpha^i \alpha^j \pmod{5}.\]
So by Hensel's lemma, $\Delta(g_2)$ is always a square.  And $\Delta(g_1)$ is a square if $i \neq j$.  If $i = j$, then $\Delta(g_1)$ is a square if and only if $\sm{x}{y}{z}{w}$ has square discriminant modulo $5$.  We see that this condition depends only upon $g_1$ mod $25$.  So it suffices to determine which subgroups of $H_{5^2, \exc} \subset \GL_2(\zz/25\zz)$ satisfy this property.  Since all such groups $G(25)$ are presented as,
\[1 \to I \to G(25) \to H_5 \to 1, \]
where $H_5 \subseteq H_{5, \exc}$ is a group such that $\pp H_5  = \pp H_{5, \exc} = D_4$,
it suffices to classify $I$.  Further $I \simeq 1 + 5 \cdot J$ for some \emph{additive} subgroup $J$ of $M_2(\ff_5)$.  The matrix $\sm{x}{y}{z}{w}$ above is the corresponding element of $J$ in this decomposition.

The group $J$ has an action of $H_{5}$ by conjugation, making it a representation of $D_4 = \pp(H_{5})$.  Our claim is that the maximal representations giving rise to groups with the desired properties are $A \oplus B, A \oplus C$, and $A \oplus D$.  We have
\[\Delta \begin{pmatrix} a + b & c+d \\ c-d & a-b \end{pmatrix} = 4(b^2 + c^2 - d^2). \]
If $b, c, d$ are drawn from additive subgroups $V_B, V_C, V_D$ of $\ff_5$, at least two of which are nontrivial, then the above discriminant represents nonsquares.  Hence we have the claim, which completes the proof.\end{proof}

\begin{proof}[of Theorem \ref{mainthm}\eqref{mainthm_1}]
To prove part \eqref{mainthm_1} of Theorem \ref{mainthm} we must combine information at different primes.  Fix a number field $K$ and a positive integer $N$.  Write
\[N = \ell_1^{n_1} \cdot \ell_2^{n_2} \cdots \ell_k^{n_k}. \]
If there exist infinitely many exceptional pairs $(N, j(E))$ for $K$, then for each $\ell_i$, either
\begin{enumerate}[(a)]
\item $\genus(X_0(\ell_i^{n_i})) \leq 1$, or
\item there are infinitely many counterexamples for $\ell_i^{n_i}$-isogenies over $K$,
\end{enumerate}
and for at least one prime $\ell_i \mid N$, we must be in case (b).  Case (a) includes prime powers $2$, $3$, $4$, $5$, $7$, $8$, $9$, $11$, $13$, $16$, $17$, $19$, $25$, $27$, $32$, $49$.  We may further characterize the counterexamples in case (b): by Proposition \ref{finitej}, we are in one of the following two cases: 
\begin{itemize}
\item $\ell = 2$ and  $E/K$ has $G(2^n)$ contained in one of the genus 0 or 1 groups in the table in Appendix \ref{2data}; in this case $n=3$ or $5$. 
\item $\ell = 5, 7$ and $(\ell, j(E))$ is exceptional.  In this case, Proposition \ref{vert57} gives that $n = 1$ or $n=2$.
\end{itemize}
However, there is a further condition: the modular curve parameterizing the level structure mod $N$ (which includes the exceptional level structure in case (b) and the Borel level structure in case (a)) must also have small genus.  This is a much stronger condition, which says that the fiber product
\[ X_{\ell_1^{n_1}} \times_{X(1)} \cdots \times_{X(1)} X_{\ell_k^{n_k}}, \]
where $X_{\ell_i^{n_i}}$ is short-hand for the modular curve parameterizing the given level structure mod $\ell_i^{n_i}$, has small genus.

To find the finite list in Theorem \ref{mainthm}, we compute the genera of the fiber products 
\[X_G \times_{X(1)}  X_H \times_{X(1)} X_0(N), \] 
where $G$ is one of the groups with $n \leq 5$ in the table in Appendix \ref{2data}, and $H$ is one of $H_{5, \exc}$ or $H_{7, \exc}$, and $\genus(X_0(N)) = 0$ or $1$, and the levels of all factors are coprime.  This is achieved by functions in the file \texttt{verify\_Theorem\_1.py} available at \url{https://github.com/ivogt161/isogeny}.  The results are as follows:

\begin{itemize}
\item $X_G \times_{X(1)} X_H$ has genus $>1$ for all $G$ and $H$,
\item $X_G \times_{X(1)} X_0(N)$ has genus $=0,1$ if and only if $G = G_{2147}$ or $G_{2177}$ and $N = 3, 5, 9$,
\item $X_H \times_{X(1)} X_0(N)$ has genus $=0,1$ if and only if $H = H_{5, \exc}$ and $N=2$.
\end{itemize}
From this information one can assemble the list 
\begin{align*}
L &=\{5, 5^2, 7, 7^2\} \cup \{2^3, 2^5\} \cup \{3*2^3, 5*2^3, 9*2^3\} \cup \{2*5, 2*25\}, \\
&= \{5, 7, 8, 10,  24, 25, 32, 40, 49, 50, 72\}. \qedhere
\end{align*}
\end{proof}

\section{Exceptional curves with complex multiplication}\label{cm}

In this section we use the extra $\O$-module structure on $E[N]$ when $E$ has complex multiplication by an order $\O$ in an imaginary quadratic field $F$ to classify exceptional $j$-invariants corresponding to CM curves.  
We write $d_F$ for the discriminant of the quadratic field $F$.

\begin{thm}\label{cm_nicebound}
Let $E/K$ have geometric complex multiplication by $\O$ with $\Frac(\O) = F$.  Let $\O_F$ denote the maximal order in $F$.  If $(N, j(E))$ is exceptional, then there exist relatively prime numbers $A, B$ with $N = A\cdot B$ such that 
\[A \leq (\#\O_F^\times \cdot [KF:H_F])^4 \leq (6[K:\qq])^4, \]
and if $F \subset K$, $E$ has a $B$-isogeny, or if $F \not\subset K$, $B$ factors as $B = \prod_i \ell_i^{n_i}$ such that for all $i$ one of the following holds
\begin{itemize}
\item $n_i=1$ and $\ell_i | d_F$, or
\item $\ell_i$ splits in $F$, $\ell_i \equiv 1 \pmod{4}$ and $K \supset \qq(\sqrt{\ell_i})$, or
\item $\ell_i$ splits in $F$, $\ell_i \equiv 3 \pmod{4}$ and $KF= K(\sqrt{-\ell_i})$, or
\item $\ell_i = 2$ splits in $F$ and 
\begin{itemize}
\item $n_i = 1$ or $2$, or
\item $n_i \geq 3$ and $K \supset \qq(\sqrt{2})$ and $KF = K(\sqrt{-2})$,
\end{itemize}
\end{itemize}
and $\C(E)$ does not have an $\ell_i^{n_i}$-isogeny over $K$ for $\ell_i^{n_i} \mid\mid B$ unless $n_i=1$ and $\ell_i \mid d_F$, or $2$ splits in $F$ and $\ell_i^{n_i} = 2,4$.
\end{thm}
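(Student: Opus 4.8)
The plan is to reduce to prime powers and then treat each prime dividing $N$ according to its splitting in $F$, using the rigidity of the $\ell$-adic Galois representation of a CM curve. Since $E$ has an $N$-isogeny locally almost everywhere (resp.\ globally) up to isogeny if and only if it has an $\ell^{n_\ell}$-isogeny locally almost everywhere (resp.\ globally) up to isogeny for every $\ell^{n_\ell}\,\|\,N$, it suffices to analyze one prime at a time. The input from CM theory that I would invoke is the following: after base change to $KF$, the image $\rho_{E,\ell^\infty}(G_{KF})$ is an open subgroup of a Cartan subgroup $C_\ell\subset\GL_2(\zz_\ell)$ which is split, nonsplit, or radical (in the sense of Section \ref{prelim}, cf.\ \cite[Thm 13.1.2]{gross}) according as $\ell$ splits, is inert, or ramifies in $F$; its index in $C_\ell$ divides a fixed integer $A_0$ dividing $\#\O_F^\times\cdot[KF:H_F]$, and equals $1$ as soon as $\ell\nmid A_0$; and over $K$ the image lies in the normalizer $N(C_\ell)$, surjecting onto $N(C_\ell)/C_\ell\cong\Gal(KF/K)$ whenever $F\not\subset K$.

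First I would dispose of the primes $\ell\nmid A_0$, where $\rho_{E,\ell^\infty}(G_{KF})$ is the whole Cartan. If $\ell$ is inert, $C_\ell$ contains elements $\sm{x}{\epsilon y}{y}{x}$ with $\ell\nmid y$, of discriminant $4\epsilon y^2$, a nonsquare; by Chebotarev such an element is the reduction of a Frobenius at infinitely many primes of $K$ split in $KF$, so the local condition fails and $\ell\nmid N$. If $\ell\mid d_F$ is ramified, then $E[\p]$ for $\p$ the unique prime of $\O_F$ over $\ell$ is a $G_K$-stable line (as $\p$ is canonical), so $E$ has a $K$-rational $\ell$-isogeny and $n_\ell=1$ causes no failure; but for $n_\ell\ge 2$ the Cartan mod $\ell^2$ contains an element of discriminant of $\ell$-adic valuation exactly $1$ (e.g.\ $1+\pi$ with $\pi^2\in\ell\zz_\ell^\times$), a nonsquare mod $\ell^2$, so the local condition fails. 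If $\ell$ splits, the diagonal part of $N(C_\ell)$ has square discriminant $(u-v)^2$, while the nontrivial coset consists of antidiagonal elements $\sm{0}{u}{v}{0}$ of discriminant $-4\det=-4\cyc$; hence the local condition holds if and only if $-\cyc(g)$ is a square mod $\ell$ for every $g\in G_K\setminus G_{KF}$. Analyzing this with $-1$ a square (resp.\ nonsquare) mod $\ell$ and $KF=K(\sqrt{d_F})$ with $\ell\nmid d_F$, one gets exactly that $\sqrt\ell\in K$ when $\ell\equiv1\pmod4$ and $KF=K(\sqrt{-\ell})$ when $\ell\equiv 3\pmod 4$; moreover when $F\subset K$ the image lies in $C_\ell\subset$ a Borel so $E$ has the $\ell^{n_\ell}$-isogeny, while when $F\not\subset K$ the full normalizer of a split Cartan lies in no Borel so $E$ has no $\ell^{n_\ell}$-isogeny up to isogeny. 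The prime $\ell=2$ needs the variant local criterion of Section \ref{two} in place of square discriminants and a short separate computation, yielding the stated $\qq(\sqrt2)$ and $K(\sqrt{-2})$ conditions for $n\ge3$ and unconditional success for $n\le 2$.

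It remains to collect the primes $\ell\mid A_0$ into the factor $A$, which requires bounding $n_\ell$ for these: $\rho_{E,\ell^\infty}(G_{KF})$ still contains a subgroup of $C_\ell$ of index a power of $\ell$ at most $\ell^{v_\ell(A_0)}$, and if $n_\ell>4v_\ell(A_0)$ then (for $\ell$ inert or ramified) this subgroup contains an element $1+\ell^{v_\ell(A_0)}\omega$ whose discriminant has even positive valuation with nonsquare unit part, violating the local condition; for $\ell$ split the preceding paragraph already forces one of the $B$-forms, so split primes go to $B$ regardless of $A_0$. Thus $A:=\prod_{\ell\mid A_0}\ell^{n_\ell}$ (the inert primes, together with ramified primes occurring to a power $\ge 2$) divides $A_0^4$, so $A\le A_0^4\le(\#\O_F^\times[KF:H_F])^4\le(6[K:\qq])^4$ since $\#\O_F^\times\le 6$ and $[KF:H_F]\le[KF:F]\le[K:\qq]$; and $B=N/A$ has exactly the advertised shape, with $E$ failing the $\ell^{n_\ell}$-isogeny precisely outside the ramified-$n_\ell=1$ and split-$2$-with-$n_\ell\le 2$ cases, and $E$ having a $B$-isogeny altogether when $F\subset K$. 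Since $(N,j(E))$ exceptional forces $E$ to fail at some prime power, which by the above is either one of these genuine $B$-failures or a prime dividing $A$, the statement follows. The main obstacle I anticipate is pinning down the CM structure theorem with its uniform index bound $A_0\mid\#\O_F^\times[KF:H_F]$ — handling non-maximal orders, the interplay between $E$ and its quadratic twists, and the passage between $K$ and $KF$ — and, secondarily, the genuinely $2$-adic bookkeeping for $\ell=2$; the odd split/inert/ramified discriminant computations are routine.
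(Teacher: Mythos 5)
Your overall strategy — reduce to prime powers, sort each prime by its splitting behavior in $F$, use surjectivity of the reduced CM representation to bound the index of the image of $G_{KF}$ in the Cartan, and fold the deviant primes into a small factor $A$ — is the same blueprint the paper follows, and your discriminant computations for inert, ramified, and split primes are close to what actually appears in Proposition \ref{cmprimepower}. However, the way you organize the decomposition $N = AB$, by whether $\ell$ divides $A_0 \colonequals \#\O_F^\times[KF:H_F]$, does not work, and the bound $A \leq A_0^4$ does not come out the way you want.

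The critical error is the assertion that $\rho_{E,\ell^\infty}(G_{KF})$ is the \emph{full} Cartan "as soon as $\ell \nmid A_0$." The index $[C_{\ell^n}(\O):\rho_{\ell^n}(G_{KF})]$ divides $A_0$, but a divisor of $A_0$ need not equal $1$ just because $\ell \nmid A_0$. For a concrete failure mode: take $\ell$ inert in $F$ with $\ell \nmid A_0$ but $\ell + 1 \mid A_0$. Then the image mod $\ell$ may be exactly the scalars (index $\ell+1$ in the nonsplit Cartan), in which case every element of $\rho_\ell(G_{KF})$ has discriminant $0 \bmod \ell$ and the local obstruction you invoke to rule $\ell$ out of $N$ simply does not arise. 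Such an inert prime power belongs in $A$ but is captured by neither your $A$ (since $\ell \nmid A_0$) nor your $B$ (since the split/ramified menu does not apply). A symmetric issue affects your treatment of split primes: you send every split prime to $B$ and then assert $E$ fails to have the $\ell^n$-isogeny over $K$; but if the image collapses to scalars modulo a high power of $\ell$ (index $\geq \ell^{\lfloor n/2 \rfloor -1}(\ell-1)$), the common eigenvectors of the split Cartan descend to simultaneous eigenvectors of the full normalizer and $E$ can in fact have the isogeny, contradicting the theorem's claim for prime powers in $B$. Those split prime powers must go to $A$, not $B$.

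The paper instead sorts prime powers by the size of the index itself. Proposition \ref{cmprimepower} gives a trichotomy at each $\ell^n \,\|\,N$: either (1) $n=1$, $\ell \mid d_F$ and the isogeny exists; or (2) $\ell$ splits and one of the explicit field conditions in the theorem's list holds (with the precise isogeny behavior); or (3) $[C_{\ell^n}(\O):\rho_{\ell^n}(G_{KF})] \geq \ell^{n/4}$. The factor $B$ is defined to consist of exactly the $\ell^n$ for which both a structural condition holds \emph{and} $\ell^n > [C_{\ell^n}(\O):\rho_{\ell^n}(G_{KF})]^4$; everything else falls into case (3) and is placed in $A$. Then multiplicativity of the index, together with the case-(3) bound, yields
\[
A_0 \geq [C_A(\O):\rho_A(G_{KF})] \geq \prod_{\ell^n \,\|\,A} [C_{\ell^n}(\O):\rho_{\ell^n}(G_{KF})] \geq \prod_{\ell^n\,\|\,A} \ell^{n/4} = A^{1/4},
\]
which is an inequality $A \leq A_0^4$ and not a divisibility statement $A \mid A_0^4$; the latter is neither needed nor true in general. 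Finally, on the non-maximal-order concern you flag: the paper disposes of this at the outset by citing the fact that a curve with CM by $\O \subsetneq \O_F$ is $K$-isogenous to one with CM by $\O_F$, so one may assume $\O = \O_F$ throughout.
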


The first step in proving this Theorem is reducing to the case of curves with CM by the maximal order $\O_F$.  That is achieved by the following Lemma.

\begin{lem}[{(\cite[Cor 3]{ehom})}]
Let $E/K$ have CM by $\O \subsetneq \O_F$.  Then there exists an elliptic curve $E'/K$ with CM by $\O_F$ and an isogeny $E \to E'$ defined over $K$.
\end{lem}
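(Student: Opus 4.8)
The plan is to write down the required isogeny explicitly over $\bar{K}$ by means of the analytic uniformization, and then to check that it is $G_K$-equivariant, so that it descends to $K$. By the theory of complex multiplication, after fixing an embedding $\bar{K} \hookrightarrow \cc$ the curve $E$ corresponds to an invertible (``proper'') fractional $\O$-ideal $\mathfrak{a}$, so $E \cong \cc/\mathfrak{a}$. Let $f = [\O_F : \O]$ and let $\mathfrak{c} = f\O_F$ be the conductor of $\O$ in $\O_F$; viewing $\mathfrak{c}$ as an ideal of $\O = \End_{\bar{K}}(E)$, set $E' := E/E[\mathfrak{c}]$, where $E[\mathfrak{c}] = \bigcap_{\phi \in \mathfrak{c}} \ker\phi$. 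A short computation with the uniformization (using that $\mathfrak{a}$ is invertible and that $\mathfrak{c}^{-1} = \O_F$) identifies $E[\mathfrak{c}]$ with the finite subgroup $\O_F\mathfrak{a}/\mathfrak{a}$, whence $E' \cong \cc/\O_F\mathfrak{a}$; since $\O_F\mathfrak{a}$ is a fractional $\O_F$-ideal, $E'$ has CM by $\O_F$, and the quotient map $\lambda \colon E \to E'$ is an isogeny of degree $[\O_F : \O]$. It then remains only to arrange that $\lambda$, a priori defined over $\bar{K}$, is defined over $K$.

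If $F \subseteq K$ there is nothing more to prove: every element of $\O$ is then a $K$-rational endomorphism of $E$, so $E[\mathfrak{c}]$ is cut out inside $E$ by $K$-rational maps and is therefore defined over $K$, and hence so are $E'$ and $\lambda$.

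The substantive case — and the step I expect to be the real obstacle — is $F \not\subseteq K$, so that $[KF : K] = 2$. Here I would use the canonical action of $G_K$ on $\End_{\bar{K}}(E) = \O$ by $\phi \mapsto {}^{\sigma}\phi := \sigma\circ\phi\circ\sigma^{-1}$. Since the endomorphisms of a CM elliptic curve are all defined over $KF$, and $\Aut(\O) = \Gal(F/\qq)$ has order $2$, this action factors through $\Gal(KF/K) \hookrightarrow \Gal(F/\qq)$, so on $\O$ it is either trivial or complex conjugation. Combining the compatibility $\sigma(\phi(P)) = ({}^{\sigma}\phi)(\sigma P)$ for $P \in E(\bar{K})$ with the fact that $\mathfrak{c} = f\O_F$ is stable under complex conjugation (so that ${}^{\sigma}\phi \in \mathfrak{c}$ whenever $\phi \in \mathfrak{c}$) shows immediately that $\sigma$ carries $E[\mathfrak{c}]$ into itself. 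Hence $E[\mathfrak{c}]$ is a $G_K$-stable finite subgroup, so it is defined over $K$; as quotients by $K$-rational subgroups commute with base change, $E' = E/E[\mathfrak{c}]$ and $\lambda$ are defined over $K$, while the $\bar{K}$-isomorphism $E' \cong \cc/\O_F\mathfrak{a}$ found above shows $\End_{\bar{K}}(E') = \O_F$. (One could instead induct on $f$, factoring off a minimal step $E \to E''$ with $\O \subsetneq \End_{\bar{K}}(E'') \subseteq \O_F$, but verifying the Galois-stability of the kernel remains the crux in every version of the argument.)
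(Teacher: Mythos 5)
The paper does not prove this lemma; it is imported wholesale as a citation to \cite[Cor 3]{ehom}, so there is no internal argument to compare against. Your proof is nonetheless a correct, self-contained one, following the standard route. The two points that carry the real weight both check out: first, the identification $E[\mathfrak{c}] \cong \O_F\mathfrak{a}/\mathfrak{a}$ (equivalently $(\mathfrak{a}:\mathfrak{c}) = \O_F\mathfrak{a}$) genuinely needs that $\mathfrak{a}$ is a \emph{proper} $\O$-ideal, hence invertible, and you invoke this correctly — the conductor $\mathfrak{c} = f\O_F$ is itself not $\O$-invertible, but $(\O:\mathfrak{c}) = \O_F$ as you say, and invertibility of $\mathfrak{a}$ lets you pass this through. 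Second, the Galois-stability of $E[\mathfrak{c}]$ reduces, via $\sigma(\phi(P)) = ({}^{\sigma}\phi)(\sigma P)$, to the observation that $\mathfrak{c} = f\O_F$ is stable under complex conjugation; since the $G_K$-action on $\End_{\bar K}(E)$ factors through $\Gal(KF/K) \hookrightarrow \Gal(F/\qq)$, it either fixes $\O$ pointwise or acts by conjugation, and in both cases preserves $\mathfrak{c}$. The remaining claim that $E' = \cc/\O_F\mathfrak{a}$ has full CM by $\O_F$ uses that a fractional $\O_F$-ideal is automatically proper because $\O_F$ is maximal, which you tacitly (and correctly) rely on. This is, as far as I can tell, precisely the argument underlying the cited corollary.
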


From this point forward we will assume that $E$ over $K$ has CM by the maximal order $\O \colonequals \O_F$ in the imaginary quadratic field $F$. 
We have the following tower of field extensions:
\begin{center}
\begin{tikzpicture}[scale=.8]
\draw (0,0) node{$\qq$};
\draw (-1,1) node{$F$};
\draw (-2,2) node{$H$};
\draw (1.5,1.5) node{$\qq(j(E))$};
\draw (2.5,2.5) node{$K$};
\draw (0.5,4.5) node{$KF$};

\draw (-.2,.2) -- (-.8,.8);
\draw (-1.2, 1.2) -- (-1.8,1.8);
\draw (.2,.2)--(1.2,1.2);
\draw (1.8,1.8) -- (2.3,2.3);

\draw (2.2,2.8) -- (.8,4.2);
\draw (-1.8,2.2) --(.2,4.2);
\draw (-2+1.25, 2+1.25) node[above]{$d$};

\end{tikzpicture}
\end{center}
The field $\qq(j(E))$ is the \defi{field of moduli of $E$} and $H$ denotes the Hilbert class field of $F$.  Recall that $H= F(j(E))$.  We let $d = [KF :H]$ for simplicity.

As a consequence of the main theorem of complex multiplication we have that $E[N]\simeq \O/N \O$ as $\O$-modules and $\rho_{E, N}(G_{H})$ acts on $E[N]$ through $C_{N}(\O) \colonequals \left(\O/N \O\right)^\times$.  By the Chinese remainder theorem we have a canonical isomorphism
\[C_N(\O) \simeq \prod_{\ell^{n} \mid\mid N} C_{\ell^{n}}(\O). \]
It is easy to show \cite[Lemma 2.2]{bourdon} that
\[\#C_{\ell^n}(\O) = \ell^{2n-2}(\ell-1) \left( \ell -  \left(\frac{d_F}{\ell}\right) \right). \]
As in \cite[Theorem 1.1]{bourdon}, for any $E$ defined over $H = F(j(E))$ with CM by $\O$, the reduced Galois representation
\[ \widebar{\rho} \colon G_H \to \frac{\left(\O/N \O\right)^\times}{\O^\times} \]
is surjective.  As a consequence, if $K$ is arbitrary, 
\begin{equation}\label{index_bound}
\prod_{\ell^n \mid\mid N} [C_{\ell^n}(\O): \rho_{E, \ell^n}(G_{KF})] \leq [C_{N}(\O) : \rho_{E, N}(G_{KF}) ] \leq \#\O^\times [KF : H] \leq 6 d. \end{equation}
As as $\zz$-module, we have $\O = \left[ 1,  \left( \frac{ d_F + \sqrt{d_F}}{2} \right) \right].$  Therefore, in this basis, the image of the mod $N$ Galois representation of $E_{KF}$ is contained in matrices of the form
\[ \left\{ \gabold  \right\}, \]
for $a, b \in \zz/N \zz$.

If $E$ is defined over the field of moduli $\qq(j(E))$, then the full group $\rho_N(G_{\qq(j(E))})$ also contains an element corresponding to complex conjugation of $F /\qq$ acting on $\O_F$.  In terms of our chosen basis above, this is of the form
\[c \colonequals \begin{pmatrix} 1 & d_F \\ 0 & -1 \end{pmatrix}. \]
In general, if $F \not\subset K$, then the group $\rho_N(G_{KF}) \subset \rho_N(G_K)$ is an index 2 normal subgroup.  So $\rho_{E, N}(G_K)$ is a subgroup of index less than or equal to $6d$ of the group generated by $C_{N}(\O)$ and $c$, and it does not need to contain $c$.  All elements of $\rho_N(G_K)$ not in $\rho_N(G_{KF})$ are of the form
\[h_{a,b} \colonequals c g_{a,b} =  \habold, \]
for some $a,b$.

The main tool in proving Theorem \ref{cm_nicebound} is the following result for prime powers:

\begin{prop}\label{cmprimepower}
Let $E/K$ have CM by $\O_F$ the full ring of integers of $F$ and let $\ell$ be a prime.  Assume that $\C(E)$ has an $\ell^n$-isogeny locally almost everywhere.

Then either 
\begin{enumerate}[(1)]
\item $n=1$ and $\ell \mid d_F$ and $E$ has an $\ell^n$-isogeny over $K$, or 
\item $\ell$ splits in $F$, and 

\begin{itemize}
\item[$\bullet$] ($F \subset K$): $E$ has an $\ell^n$-isogeny over $K$,
\item[$\bullet$] ($F \not\subset K$): $\ell \equiv 1 \pmod{4}$, $K \supset \qq(\sqrt{\ell})$, and $\C(E)$ does not have an $\ell^n$-isogeny over $K$,
\item[$\bullet$] ($F \not\subset K$): $\ell \equiv 3 \pmod{4}$, $KF = K(\sqrt{-\ell})$, and $\C(E)$ does not have an $\ell^n$-isogeny over $K$,
\item[$\bullet$] ($F \not\subset K$): $\ell^n = 2$ or $4$, and $\C(E)$ has an $2^n$-isogeny over $K$,
\item[$\bullet$] ($F \not\subset K$): $\ell = 2$, $n\geq 3$, $K \supset \qq(\sqrt{2})$ and $KF = K (\sqrt{-2})$, and  $\C(E)$ does not have an $2^n$-isogeny over $K$.
\end{itemize}
\item The index
\begin{align*}  [C_{\ell^n}(\O) : \rho_{E, \ell^n}(G_{KF})] &\geq \begin{cases} \ell^{n/2-1}(\ell-1) &: \ n \geq 4 \text{ even or } \ell \text{ odd and } n=2, \\ \ell^{(n-1)/2} &: \ n\geq 3 \text{ odd} \\ \ell & : \ n=1 \text{ or }\ell=2,n=2. \end{cases}   \\
&\geq \ell^{ n/4 }.
\end{align*}
\end{enumerate}

\end{prop}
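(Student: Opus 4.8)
The plan is to reduce to CM by the maximal order $\O_F$ — which is exactly the content of the Lemma quoted just before the statement — and then argue entirely with the explicit matrix models $g_{a,b}=\gabold$ for $\rho_{\ell^n}(G_{KF})\subseteq C_{\ell^n}(\O)$ and $h_{a,b}=\habold$ for the non-trivial coset $\rho_{\ell^n}(G_K)\setminus\rho_{\ell^n}(G_{KF})$ that appears when $F\not\subset K$. Writing $\omega=\tfrac{d_F+\sqrt{d_F}}{2}$, two elementary identities do all the work: $\Delta(g_{a,b})=b^2 d_F$, and, since $\tr(h_{a,b})=0$ and $\det(h_{a,b})=-\det(g_{a,b})=-\Nm(a+b\omega)$, $\Delta(h_{a,b})=4\Nm(a+b\omega)=4\det(g_{a,b})$. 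First I would invoke Proposition \ref{equiv_p} (for $\ell$ odd) and its $\ell=2$ analogue from Section \ref{two} to rephrase the hypothesis as: every element of $G_K(\ell^n)$ has square discriminant mod $\ell^n$ (resp.\ characteristic polynomial with a root mod $2^n$). Then one splits on the behaviour of $\ell$ in $F$.

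If $\ell$ ramifies ($\ell\mid d_F$), then $C_\ell(\O)$ and the reduction of any coset representative both fix the line spanned by $\sqrt{d_F}$ mod $\ell$, so $G_K(\ell)$ fixes it and $E$ has an $\ell$-isogeny over $K$ — for $n=1$ this is conclusion (1). For $n\ge2$, $\Delta(g_{a,b})=b^2 d_F$ with $v_\ell(d_F)=1$ is a square mod $\ell^n$ only if $v_\ell(b)\ge\lfloor n/2\rfloor$, so $\rho_{\ell^n}(G_{KF})$ lies in an explicit subgroup of $C_{\ell^n}(\O)$ of index $\ge\ell^{\lfloor n/2\rfloor}\ge\ell^{n/4}$, which is conclusion (3). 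If $\ell$ is inert, $d_F$ is a non-residue mod $\ell$, so now $v_\ell(b)\ge\lceil n/2\rceil$ is forced and the index is $\ge\ell^{\lceil n/2\rceil-1}(\ell+1)\ge\ell^{n/4}$ — again conclusion (3). (In both cases only the constraint coming from $G_{KF}(\ell^n)\subseteq G_K(\ell^n)$ is needed; the $h_{a,b}$ give nothing extra.) The last inequality $\ge\ell^{n/4}$, and the refined bounds in (3), follow from a short check comparing $\ell^{\lfloor n/2\rfloor}$, $\ell^{\lceil n/2\rceil-1}(\ell+1)$, etc.\ for $\ell\ge2$, $n\ge1$.

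If $\ell$ splits, choose the basis dual to the two primes above $\ell$; then $C_{\ell^n}(\O)$ is the full split Cartan, so $E_{KF}$ has two independent $\ell^n$-isogenies. If $F\subset K$ one of them is $K$-rational (the first bullet of (2), with $C=1$). If $F\not\subset K$, complex conjugation interchanges the two primes, so every $h_{a,b}$ is anti-diagonal and $G_K(\ell^n)$ is not contained in the Cartan: $E$ has no $\ell^n$-isogeny over $K$. Taking the identity in $G_{KF}$ shows $\Nm(a_0+b_0\omega)$ is a square for a fixed coset representative, and then, from the identity $\Delta(h_{a,b})=4\Nm(a_0+b_0\omega)\Nm(a_\tau+b_\tau\omega)$ ranging over $\tau\in G_{KF}$, one gets $\Nm(\rho_{\ell^n}(G_{KF}))\subseteq(\ff_\ell^\times)^2$, i.e.\ $\cyc(G_{KF})\subseteq(\ff_\ell^\times)^2$, i.e.\ $\sqrt{\ell^*}\in KF$ with $\ell^*=(-1)^{(\ell-1)/2}\ell$ (respectively $\qq(\mu_8)\subseteq KF$ when $\ell=2$, $n\ge3$). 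Since also $\cyc$ of the coset representative equals $-\Nm(a_0+b_0\omega)\in-(\ff_\ell^\times)^2$, we get $\cyc(G_K)\subseteq\pm(\ff_\ell^\times)^2$, and combining with $[KF:K]=2$ pins the conditions down exactly: for $\ell\equiv1\bmod4$, $-1$ is a square, so $\cyc(G_K)\subseteq(\ff_\ell^\times)^2$ and $\sqrt{\ell}\in K$; for $\ell\equiv3\bmod4$, $-1$ is a non-square, so $\sqrt{-\ell}\notin K$, whence $KF=K(\sqrt{-\ell})$; for $\ell=2$, $n\ge3$, the same bookkeeping inside $(\zz/2^n)^\times$ gives $\sqrt2\in K$ and $KF=K(\sqrt{-2})$. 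Finally, for $\ell$ odd, $E$ has no $\ell^n$-isogeny even up to isogeny, because a degree-$\ell$ isogeny is an isogeny invariant and any $\ell^n$-isogeny up to isogeny would produce a global $\ell$-isogeny; for $\ell=2$ with $\ell^n\in\{2,4\}$ one instead gets a $2^n$-isogeny up to isogeny ($G_K(2)\subseteq\GL_2(\ff_2)$ must fix a line, and then the $n=2$ lifting step applies), while for $2^n$ with $n\ge3$ the failure up to isogeny is read off from the classification of exceptional subgroups of $\GL_2(\zz/2^n\zz)$ in Section \ref{two}.

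The hard part is the split case with $F\not\subset K$: getting the field conditions in the clean form stated, and — especially at $\ell=2$ — showing the exception genuinely survives passage to the entire isogeny class rather than just failing over $K$ itself; for the latter one has to bring in the computational classification of Section \ref{two}. A secondary subtlety is that the "complex conjugation" coset representative in $G_K$ need not have cyclotomic value $-1$ when $K$ is not totally real, so the field bookkeeping must be done through $\cyc(G_K)$ and $\cyc(G_{KF})$ directly. The discriminant and index estimates in the ramified/inert cases are routine but require care with $\ell$-adic valuations, and the small levels $\ell^n\in\{2,4\}$ must be checked separately.
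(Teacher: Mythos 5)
Your overall skeleton matches the paper's: the same matrix models $g_{a,b}$ and $h_{a,b}$, the identity $\Delta(h_{a,b}) = 4\Nm(a+b\omega)$ (forced by $\tr(h_{a,b})=0$ and $\det(h_{a,b}) = -\det(g_{a,b})$), and the same trichotomy on the splitting behaviour of $\ell$ in $F$. The inert and ramified cases are handled exactly as the paper does, and your field-theoretic extraction of the conditions $K\supset\qq(\sqrt{\ell})$, $KF = K(\sqrt{-\ell})$, etc.\ from the multiplicativity of $\det$ (cyclotomic character) is essentially the paper's argument, although the formulation ``$\cyc(G_K)\subseteq\pm(\ff_\ell^\times)^2$'' is vacuous when $\ell\equiv 3\pmod 4$ (that set is all of $\ff_\ell^\times$); the actual constraint is that $\cyc$ sends $G_{KF}$ into squares and the nontrivial coset into non-squares, i.e.\ the normalizer character factors through the Legendre symbol.

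There is, however, a genuine gap in the split $F\not\subset K$ case with $\ell$ odd. You deduce ``$E$ has no $\ell^n$-isogeny even up to isogeny'' from ``$E$ has no global $\ell$-isogeny'' plus isogeny-invariance of having an $\ell$-isogeny. But your justification of the first claim (the $h_{a,b}$ swap $v_\pm$, so $G_K(\ell^n)$ preserves no line) implicitly assumes that $v_\pm$ are the \emph{only} eigenlines of the Cartan piece mod $\ell$, i.e.\ that $r := \min_{g_{a,b}\in\rho(G_{KF})} v_\ell(b)$ is $0$. If $r\ge 1$, then $\rho_\ell(G_{KF})$ is scalar, the coset element is anti-diagonal with square determinant (by the local hypothesis), and its two distinct eigenlines \emph{are} fixed by all of $\rho_\ell(G_K)$: so $E$ does have a global $\ell$-isogeny, and your isogeny-invariance shortcut gives no information. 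The paper handles this by an explicit analysis of $r$: it shows that a simultaneous eigenvector of the whole normalizer mod $\ell^k$ can only exist for $k\le r$, so for $r<\lfloor n/2\rfloor$ there is no $j+k=n$ decomposition with Cartan mod $\ell^j$ and Borel mod $\ell^k$ (hence no $\ell^n$-isogeny up to isogeny), while $r\ge\lfloor n/2\rfloor$ forces the index bound of case (3). That $r$-analysis is needed and cannot be replaced by your shortcut. Similarly, for $\ell=2$, $n\ge 3$, outsourcing to the computational classification of Section~\ref{two} is both less clean and incomplete (it only covers $n\le 6$); the paper's $r$-argument covers all $n$ uniformly, with the small cases $2^n\in\{2,4\}$ dispatched by observing that the Cartan is scalar mod $2$.
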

\begin{cor}
Let $E/K$ have CM by $\O$ with $\Frac\O = F$ and let $\ell$ be a prime.  If 
\[ \left. \begin{cases}\ell &: \ n=1 \\ \ell^{n/4} &: \ n>1 \end{cases} \right\}  > \# \O_F^\times d, \]
then $(\ell^n, j(E))$ is exceptional if and only if $\ell$ splits in $F$, $F \not \subset K$ and
\begin{itemize}
\item[$\bullet$] $\ell \equiv 1 \pmod{4}$, $K \supset \qq(\sqrt{\ell})$, or
\item[$\bullet$] $\ell \equiv 3 \pmod{4}$, $KF\subset K(\zeta_\ell)$, or
\item[$\bullet$] $\ell = 2$, $n \geq 3$ and both $K \supset \qq(\sqrt{2})$ and $KF = K(\sqrt{-2})$. 
\end{itemize}
\end{cor}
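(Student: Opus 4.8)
\medskip

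The plan is to deduce the corollary directly from Proposition~\ref{cmprimepower}, using the size hypothesis only to eliminate conclusion~(3) of that proposition. The first step is to combine the index estimate \eqref{index_bound}, which specializes (taking $N=\ell^n$) to $[C_{\ell^n}(\O):\rho_{\ell^n}(G_{KF})] \leq \#\O_F^\times d$, with the lower bound in Proposition~\ref{cmprimepower}(3), which gives $[C_{\ell^n}(\O):\rho_{\ell^n}(G_{KF})] \geq \ell$ when $n=1$ and $[C_{\ell^n}(\O):\rho_{\ell^n}(G_{KF})] \geq \ell^{n/4}$ when $n>1$. The hypothesis of the corollary is precisely that these two inequalities are incompatible, so whenever $E/K$ has an $\ell^n$-isogeny locally almost everywhere up to isogeny, we must be in case~(1) or case~(2) of Proposition~\ref{cmprimepower}.

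For the forward implication, suppose $(\ell^n,j(E))$ is exceptional: then $E$ has an $\ell^n$-isogeny locally almost everywhere up to isogeny but none over $K$ up to isogeny, so by the first step we are in case~(1) or~(2). Case~(1), and the bullets of case~(2) with $F\subset K$ or with $\ell^n\in\{2,4\}$, all assert that $E$ has an $\ell^n$-isogeny up to isogeny over $K$, contradicting exceptionality; hence $\ell$ splits in $F$, $F\not\subset K$, and we are in one of the three remaining bullets of case~(2). These are exactly the three alternatives of the corollary, once the condition $KF=K(\sqrt{-\ell})$ appearing in the case $\ell\equiv 3\pmod 4$ is rewritten as $KF\subseteq K(\zeta_\ell)$.

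This rewriting is the one genuinely non-formal point. Since $F$ is imaginary quadratic and $F\not\subset K$, we have $[KF:K]=2$; since $\Gal(K(\zeta_\ell)/K)$ is cyclic it has at most one quadratic subextension; and since $\qq(\sqrt{-\ell})\subseteq\qq(\zeta_\ell)$ for $\ell\equiv 3\pmod 4$, that quadratic subextension, when it exists, is $K(\sqrt{-\ell})$. Hence $KF\subseteq K(\zeta_\ell)$ is equivalent to $KF=K(\sqrt{-\ell})$. For the converse, assume $\ell$ splits in $F$, $F\not\subset K$, and one of the three listed conditions holds (rewriting as above in the $\ell\equiv 3\pmod 4$ case). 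These are conditions of the type appearing in the hypotheses of Theorem~\ref{main_cm}, so $E$ has an $\ell^n$-isogeny locally almost everywhere up to isogeny; applying Proposition~\ref{cmprimepower} with case~(3) excluded and case~(1) impossible (since $\ell$ splits, so $\ell\nmid d_F$), we fall into case~(2), where the hypotheses single out the matching bullet, each of which asserts that $E$ does \emph{not} have an $\ell^n$-isogeny up to isogeny over $K$. Therefore $(\ell^n,j(E))$ is exceptional.

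I expect no serious obstacle here: once Proposition~\ref{cmprimepower} and \eqref{index_bound} are in hand the argument is essentially bookkeeping. The two points requiring care are checking that the stated size bounds really do contradict conclusion~(3) for every relevant $n$ (matching the uniform exponent $n/4$ and the constant $\#\O_F^\times$ against the index inequality), and the cyclotomic reformulation of the $\ell\equiv 3\pmod 4$ case sketched above; both are routine.
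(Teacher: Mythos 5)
Your argument is exactly the intended one: the paper states the corollary without proof immediately after Proposition~\ref{cmprimepower}, and it follows from that proposition together with the index bound~\eqref{index_bound} by precisely the bookkeeping you describe — the size hypothesis rules out conclusion~(3), case~(1) is ruled out because a split prime does not divide $d_F$, and the remaining bullets of case~(2) split into those asserting $E$ has the isogeny (not exceptional) and those asserting it does not (exceptional), which match the three conditions of the corollary.

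One caution on the step you flagged as non-formal. Your claim that ``the quadratic subextension of $K(\zeta_\ell)/K$, when it exists, is $K(\sqrt{-\ell})$'' is only valid when $\sqrt{-\ell}\notin K$. If $\sqrt{-\ell}\in K$, the unique quadratic subextension of the cyclic extension $K(\zeta_\ell)/K$ (if there is one) is a different field, and $K(\sqrt{-\ell})=K$. In the forward direction this costs nothing: Proposition~\ref{cmprimepower} gives $KF=K(\sqrt{-\ell})$, and $F\not\subset K$ then forces $\sqrt{-\ell}\notin K$, whence $KF\subseteq K(\zeta_\ell)$. But in the converse direction, $KF\subseteq K(\zeta_\ell)$ with $[KF:K]=2$ identifies $KF$ with that unique quadratic subextension, and one still needs to know $\sqrt{-\ell}\notin K$ before concluding $KF=K(\sqrt{-\ell})$; as stated, your argument does not supply this. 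This is really a small imprecision in the corollary's statement as written — both Proposition~\ref{cmprimepower} and Theorem~\ref{main_cm} phrase the condition as $KF=K(\sqrt{-\ell})$, and that is the condition your proof actually needs. Either add the observation that $\sqrt{-\ell}\in K$ would make the $\ell\equiv 3$ bullet of case~(2) unattainable (so such $E$ is not exceptional for $\ell^n$), or simply work with the condition $KF=K(\sqrt{-\ell})$ throughout.
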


\begin{rem}
In the case $K = \qq$, this gives an alternate proof that for $\ell > 7$, there are no exceptional CM $j$-invariants.
\end{rem}

\begin{proof}[of Proposition \ref{cmprimepower}]
In order for $\C(E)$ to have an $\ell^n$-isogeny locally almost everywhere, every element of $\rho_{E, \ell^n}(G_K)$ must have a root of its characteristic polynomial (equivalently square discriminant when $\ell$ is odd).  We begin by considering the condition this imposes on elements of the index $\leq$ $2$ subgroup $\rho_{E, \ell^n}(G_{KF})$.  Any element has the form
\[ g_{a,b} = \gabold,\]
with characteristic polynomial and discriminant
\[\chi_{a,b}(x) = x^2 + (2a + bd_F)x + a(a+bd_F) - b^2d_F(1-d_F)/4, \qquad \Delta(g_{a,b}) = b^2d_F.\]
Let us split into the following cases:

\textbf{$\ell$ is inert in $F$.}  If $\ell$ is odd, then $d_F$ is a nonzero nonsquare mod $\ell$, and so in order for all $g_{a,b} \in \rho_{E, \ell^n}(G_{KF})$ to have square discriminant, we need $v_\ell(b) \geq \lceil n/2 \rceil$.  Therefore the image of $\rho_{E, \ell^{\lceil \frac{n}{2} \rceil} }(G_{KF})$ is contained in the scalar matrices in $\GL_2(\zz/\ell^{\lceil \frac{n}{2} \rceil} \zz)$.  And so
\begin{align*}
[C_{\ell^n}(\O) : \rho_{E, \ell^n}(G_{KF})]  \geq \frac{\#C_{\ell^{\lceil \frac{n}{2} \rceil} }(\O)}{\#\rho_{E, \ell^{\lceil n/2\rceil}}(G_{KF})} &\geq \frac{\ell^{2\lceil \frac{n}{2} \rceil -2}(\ell-1)(\ell+1) }{\ell^{\lceil \frac{n}{2}\rceil -1}(\ell-1)}, \\
&= \ell^{\lceil \frac{n}{2}\rceil -1}(\ell+1).
\end{align*}
Therefore this falls into case (3).

If $\ell=2$, then $d_F \equiv 5 \pmod{8}$.  Assume that $2 \nmid b$, then 
\[\chi_{a,b}(x) \equiv x^2 + x + 1 \pmod{2}. \]
  
Therefore there are no solutions mod $2$, and hence mod any power of $2$.

If $2 \mid b$, then write $b = 2b'$ for simplicity.  In this case we may change variables to complete the square,
\begin{equation}\label{compsquare}
\chi_{a,b}(x) = (x + (a+b'd_F))^2-(b')^2d_F \end{equation}
 to reduce to the question of whether $(b')^2d_F$ is a square mod $2^n$.  This is impossible if $v_2(b') < \lceil n/2 \rceil -1$, since $d_F$ is not a square mod $8$.  The case $v_2(b') \geq\lceil n/2 \rceil -1$ implies $v_2(b) \geq \lceil n/2 \rceil$ which in turn implies
\[ [C_{2^n}(\O) : \rho_{E, 2^n}(G_{KF})] \geq  \frac{2^{2\left\lceil n/2 \right\rceil-2}\cdot 3}{2^{\left\lceil n/2 \right\rceil-1}} = 3 \cdot 2^{\left\lceil n/2 \right\rceil-1} \geq 2^{\left\lceil n/2 \right\rceil}, \]
and so also falls into case (3).

\textbf{$\ell$ ramifies in $F$.} This is the case when $\ell \mid d_F$.  If $\ell$ is odd, then $v_\ell(d_F) = 1$ and so we need $v_\ell(b) \geq \lfloor n/2 \rfloor$ for all $g_{a,b} \in \rho_{E, \ell^n}(G_{KF})$.  As above this implies that
\[[C_{\ell^n}(\O) : \rho_{E, \ell^n}(G_{KF})]  \geq [C_{\ell^{\lfloor n/2 \rfloor}}(\O) : \rho_{E, \ell^{\lfloor n/2 \rfloor}}(G_{KF})]  \geq \ell^{\lfloor n/2 \rfloor}, \]
and so falls into case (3) if $n \geq 2$.  If $n=1$ then as $d_F$ is zero mod $\ell$, then $\Delta(g_{a,b})$ is trivially always a square.  In addition $\sv{0}{1}$ is a simultaneous eigenvector  of all $g_{a,b} \in \rho_{E, \ell}(G_{KF})$ as well as $c$; hence it is necessarily a simultaneous eigenvector of all elements of $\rho_{E, \ell}(G_K)$.  Therefore $E/K$ always has an $\ell$-isogeny.  For a discussion of this isogeny, see \cite[Sections 12 and 13]{gross}.

If $\ell=2$, then $v_2(d_F) = 2$ or $3$, corresponding to whether $d_F/4$ is $3$ or $2$ mod $4$, respectively.  We change variables to complete the square and the characteristic polynomial of $g_{a,b}$ simplifies to
\[(x-(a+bd_F/2))^2 - b^2 d_F/4. \]
This has a solution if and only if $b^2 d_F/4$ is a square mod $2^n$. 

If $d_F/4 \equiv 2 \pmod 4$, then it has odd $2$-adic valuation and hence $b^2d_F/4$ does as well. So it is a square if and only if $v_2(b) \geq \left\lfloor n/2 \right\rfloor$.  Exactly as in the odd case, this implies that 
\[ [C_{2^n}(\O) : \rho_{E, 2^n}(G_{KF})] \geq 2^{\lfloor n/2 \rfloor}, \]
and so falls into case (3) unless $n=1$.  In that case $\sv{0}{1}$ is again a simultaneous eigenvector of all of $\rho_2(G_K)$ and so $E$ has a $2$-isogeny over $K$.

If $d_F/4 \equiv 3 \pmod 4$ and $2 \nmid b$, then $b^2d_F/4$ is a unit, which is not a square mod 4.  If $v_2(b) < \lfloor n/2 \rfloor$, then $b^2d_F/4$ is not a square mod $2^n$.  If $v_2(b) \geq \lfloor n/2 \rfloor$, then we are in case (3) unless $n=1$, in which case we are in case (1).  One of $a$ or $b$ is always $0$ mod $2$.  Therefore $\sv{1}{1}$ is a simultaneous eigenvector of all of $\rho_{E,2}(G_{KF})$ and also $c$, and therefore all of $\rho_{E,2}(G_K)$.

\textbf{$\ell$ splits in $F$.}  If $\ell$ is odd, then $d_F$ is a nonzero square mod $\ell$ and so we have $D \in (\zz/\ell^n \zz)^\times$ such that $D^2 \equiv d_F \pmod{\ell^n}$.  If $\ell =2$, and $2 \nmid b$, then the characteristic polynomial of $g_{a,b}$ reduces to $x^2 +x$ mod $2$.  By Hensel's lemma this has distinct roots mod all powers of $2$.  If $2 \mid b$, we again let $b= 2b'$ and complete the square as in \eqref{compsquare} to get that $(b')^2d_F$ must be a square mod $2^n$.  But this is always a square mod all powers of $2$ as $d_F$ is a unit square mod $8$.  Hence for all primes $\ell$ splitting in $F$, all elements of $\rho_{E, \ell^n}(G_{KF})$ have a rational root of their characteristic polynomial.  Let us now determine when the same is true for $\rho_{E, \ell^n}(G_K)$ in the case that $F \not\subset K$.

Every  $h_{a,b} \in \rho_{E, \ell^n}(G_K) \smallsetminus \rho_{E, \ell^n}(G_{KF})$ is of the form 
\[h_{a,b} = cg_{a,b} = \habold.\]
Since this is trace $0$, the characteristic polynomial has a root if and only if $-\det = a^2 + abd_F - b^2d_F(1-d_F)/4$ is a square.   We split into the following cases:

If $\ell \equiv 1 \pmod{4}$, then using multiplicativity of determinants, the discriminant of $g \in \rho_{E, \ell^n}(G_K)$ is always a square if and only if the determinant is always a square.  Using the Weil pairing this occurs if and only if $K \supset \qq(\sqrt{\ell})$.  

If $\ell \equiv 3 \pmod{4}$, then the discriminant of $g \in \rho_{E, \ell^n}(G_K))$ is always a square if and only if for all $g\in \rho_{E, \ell^n}(G_{KF})$ we have that $\det(g)$ is a square and for all $h \in \rho_{E, \ell^n}(G_K) \smallsetminus \rho_{E, \ell^n}(G_{KF})$ we have that $\det(h)$ is not a square.  Hence the normalizer character for $(KF) / K$ factors through the determinant character, which is the cyclotomic character.  In fact it must factor through the quadratic character of the unique quadratic subfield of $K(\zeta_\ell)$, so equivalently,
\[ KF = K(\sqrt{-\ell}). \]
In particular, note that $\ell$ ramifies in $K$, since it splits in $F$.

If $\ell = 2$ and $n\geq 3$, then by Hensel's Lemma {\cite[Corollary 7.3]{lang_alg}} the negative of the determinant of $h_{a,b}$ is a square if and only if it is a square mod $8$.  Therefore every element of $\rho_{E, 2^n}(G_K)$ has square discriminant if and only if the determinant restricts to $+1 \mod 8$ on $\rho_{E, 2^n}(G_{KF})$ and $-1 \mod 8$ on $\rho_{E, 2^n}(G_K) \smallsetminus \rho_{E, 2^n}(G_{KF})$.  This implies first that $K$ contains the quadratic subfield of $\qq(\zeta_8)$ determines by $\{\pm1\} \subset (\zz/8\zz)^\times$, namely $\qq(\sqrt{2})$.  Furthermore, the normalizer character for $KF /K$ must factor through the cyclotomic character.  Therefore
\[KF = K(\zeta_8) = K(\sqrt{-2}), \]
as $K$ already contains $\sqrt{2}$.

This completes the forward implication of Proposition \ref{cmprimepower}.  All that remains is to show that for split prime powers large enough to be excluded from case (3), $\ell^n$ is exceptional if and only if $F \not \subset K$ and $\ell^n \neq 2, 2^2$.

The elements $(\pm D + \sqrt{d_F})/2 \in \O_F$ represented by the vectors
\[v_{\pm} \colonequals \begin{pmatrix} (-d_F\pm D)/2 \\ 1 \end{pmatrix}, \]
are simultaneaous eigenvectors of all of $\rho_{E, \ell^n}(G_{KF})$.  Therefore $E_{KF}$ has an $\ell^n$-isogeny.  So we may assume that $F \not\subset K$.   Changing into the $[v_+, v_-]$ basis, everything is of the form 
\[\tilde{g}_{a,b} \colonequals \begin{pmatrix} a + b \left( \frac{d_F +D}{2}\right) & 0 \\ 0 &a + b \left( \frac{d_F -D}{2}\right) \end{pmatrix} ,\] 
and complex conjugation simply swaps $v_+$ and $v_-$.  This is not exceptional modulo $2$ and $2^2$ since it reduces to a scalar mod $2$.

Define
\[r \colonequals \min_{g_{a,b} \in \rho_{E, \ell^n}(G_{KF})}(v_\ell(b)).\]
The only simultaneous eigenvectors of all $g_{a.b}$ mod $\ell^k$ are congruent to $v_{\pm}$ mod $\ell^{k-r}$, and hence in the $v_{\pm}$ basis, are given by $\sv{\ell^{k-r}x}{1}$, $\sv{1}{\ell^{k-r}x}$ for any choice of $x$.  In order for a vector congruent to $v_{\pm}$ mod $\ell^{k-r}$ to be an eigenvector of a $\tilde{h}_{a,b} \colonequals \tilde{c} \tilde{g}_{a,b} = \sm{0}{a+b(d_F +D)/2}{a+b(d_F-D)/2}{0}$ mod $\ell^k$ we must have
\[ \ell^{2(k-r)}x^2(a \mp bD) \equiv a\pm bD \pmod{\ell^k}. \]
But by assumption $a\pm bD$ is a unit in $\O/\ell^n \O$, and hence we must have $k=r$.  Therefore there are no common eigenvectors for the entire normalizer (e.g. for the  $\tilde{g}_{a,b}$ and the $\tilde{h}_{a', b'}$) modulo any larger powers of $\ell$ than $\ell^r$.

If we assume that \(r < \lfloor n/2 \rfloor\), then there do not exist \(0 \leq a \leq c \leq b\) with \(b + c = n\) such that \(\rho_{E, \ell^n}(G_K)\) is conjugate to a subgroup of \(A_{a,c,b}(\ell^n)\), since \(a + b < \lfloor n/2 \rfloor\) implies \(b + c < n\).
Therefore $\C(E_K)$ does not have an $\ell^n$-isogeny.  As above, modulo $\ell^r$, $\rho_{E, \ell^n}(G_{KF})$ is contained in the scalar matrices, so
\[[C_{\ell^n}(\O) : \rho_{E, \ell^n}(G_{KF})] \geq \frac{\ell^{2r-2} (\ell-1)^2}{\ell^{r-1}(\ell-1)} = \ell^{r-1}(\ell-1). \]
So $r \geq  \lfloor n/2 \rfloor$ is in case (3).

Finally, $\ell^{n/4}$ is a lower bound for the more refined bounds in part (3).\end{proof}

\begin{proof}[of Theorem \ref{cm_nicebound}]
Factor $N$ as $\prod_{i \in S} \ell_i^{n_i}$.  By assumption of $\C(E)$ having an $N$-isogeny locally almost everywhere, $\C(E)$ has an $\ell_i^{n_i}$-isogeny locally almost everywhere for each $i$.

We will define $B$ as a subproduct of those prime powers such that $\ell_i^{n_i} > [C_{\ell_i^{n_i}}(\O):\rho_{\ell_i^{n_i}}(G_{KF})]^4$ and
\begin{enumerate}
\item If $F \subset K$:
\begin{itemize}
\item $n_i=1$ and $\ell_i | d_F$
\item $\ell_i$ splits in $F$.
\end{itemize}
\item If $F \not\subset K$:
\begin{itemize}
\item $n_i=1$ and $\ell_i | d_F$
\item $\ell_i$ splits in $F$, $\ell_i \equiv 1 \pmod{4}$ and $K \supset \qq(\sqrt{\ell_i})$,
\item $\ell_i$ splits in $F$, $\ell_i \equiv 3 \pmod{4}$ and $KF= K(\sqrt{-\ell_i})$
\item $\ell _i= 2$ splits in $F$ and either $n_i=1,2$ or $K \supset \qq(\sqrt{2})$ and $KF = K(\sqrt{-2})$.
\end{itemize}
\end{enumerate}
Proposition \ref{cmprimepower} implies that $E$ has a $\ell_i^{n_i}$-isogeny for each $\ell_i^{n_i} \mid\mid B$ if and only if $F \subset K$ or $n_i=1$ and $\ell_i \mid d_F$, or $\ell_i=2$ and $n_i \leq 2$.

What remains is to show that the quotient $A \colonequals N/B$ is small.  Let $A = \prod_j p_j^{m_j}$.  By \eqref{index_bound} and Proposition \ref{cmprimepower} 
\begin{align*}
6d &\geq  [C_A(\O):\rho_{E,A}(G_{KF})] \\
& \geq  \prod_j [C_{p_j^{m_j}}(\O) : \rho_{E, p_j^{m_j}}(G_{KF})],
\intertext{and each prime $p_j$ not in $B$ must fall into case (3) of Proposition \ref{cmprimepower}, so}
&\geq \prod_j p_j^{m_j/4} = A^{1/4},
\end{align*}
and the result follows.
\end{proof}

\section{Exceptional primes for $K = \qq$}\label{qpoints}

From \cite[Thm 2]{sutherland}, the only exceptional pair for prime $N$ over $\qq$ is $(7, 2268945/128)$.  Hence for all other primes $\ell$, if $E$ locally almost everywhere has an $\ell$-isogeny, then it has an $\ell$-isogeny over $\qq$.  Since our goal here is to find all prime power exceptions over $\qq$, we can use Theorem \ref{upthm} for odd prime powers and Proposition \ref{2exceptions} for powers of $2$.

By considering $X_0(\ell^n)(\qq)$, it has been shown that there exist $\ell^n$-isogenies over $\qq$ if and only if $\ell^n = 2,3,4,5,7,8,9,13,16,25,27,37,43,67,163$ (see table and Theorem 1 in \cite{kenku}).  In particular, there are no $\ell^2$-isogenies over $\qq$ for $\ell \geq 7$.  As $5 \equiv 1 \pmod{4}$, Proposition \ref{sqrt} guarantees that the only exceptional pairs over $\qq$ could come from $\ell = 2,3,7$.

\subsection{Exceptional subgroups at $\ell = 3$}

$X_0(3^r)(\qq)$ has only cuspidal rational points when $r \geq 4$.  As any exceptional subgroup modulo $3^r$ must be lift-exceptional, all exceptional $j$-invariants must give rise to rational points on the modular curve $X_{R(27)}$, as we now show.  Indeed, by Theorem \ref{upthm}, if $(3^r, j(E))$ is lift-exceptional, then either $j(E') \in X_{R(27)}(\qq)$ or $j(E') \in X_0(3^4)(\qq)$, for $E'$ $\qq$-isogenous to $E$.  But $X_0(3^4)$ has no noncuspidal rational points.

The group $R(27)$ corresponds to the congruence subgroup 27B${}^4$ in the Cummins--Pauli database \cite{cp}, and as such $X_{R(27)}$ is genus 4.  The group $R(27)$ is not ``arithmetically maximal" (as in Definition 3.1 of \cite{rzb}).  It is conjugate to a subgroup of the genus $2$ group $G_{32} \subset \GL_2(\zz/3^3\zz)$ with generators
\[ \begin{pmatrix}26 & 0 \\ 0 & 26 \end{pmatrix}, \begin{pmatrix}11 & 21 \\ 0 & 5\end{pmatrix}, \begin{pmatrix}19 & 9 \\ 0 &10 \end{pmatrix}, \begin{pmatrix}16 &  3 \\ 0 &  1\end{pmatrix}, \begin{pmatrix}8 & 18 \\ 0 & 26 \end{pmatrix}, \begin{pmatrix} 1 & 0 \\ 0 & 26 \end{pmatrix}, \begin{pmatrix}1 & 6 \\ 0 & 1 \end{pmatrix}, \begin{pmatrix}10 &  2 \\ 18 &  1\end{pmatrix}, \begin{pmatrix}1 & 9 \\ 0 & 1\end{pmatrix}. \]
Using the publically available Magma code from \cite{rzb} in the case $\ell=3$, Jeremy Rouse and David Zureick-Brown computed that the corresponding modular curve $X_{32}$ has equation
\[X_{32} : y^2 + (x^3+z^3)y = -5x^3z^3 - 7z^6, \]
in the weighted projective space $\pp(1,3,1)$.  They also computed that the map $X_{32} \to X(1)$ is given by
{\footnotesize
\[j = \frac{ -(-3x^9 + 99x^6y - 189x^6z^3 + 63x^3y^2 + 126x^3yz^3 - 441x^3z^6 + 25y^3 - 21y^2z^3 + 147yz^6 - 343z^{12})^3(-3x^3 + y - 7z^3)^3}{(4y - 7z^3)(x^3 + y)^9(27x^6 + 18x^3y + 63x^3z^3 + 7y^2 + 7yz^3 + 49z^6)}.\]
}
The Jacobian of $X_{32}$ is rank $0$, so the implementation of Chaubaty's method in Magma gives that
\[ X_{32}(\qq) = \{ [1:-1:0], [1:0:0]\}. \]
Using the $j$-map above, both rational points of $X_{32}$ are cuspidal, and so the same must be true of any rational points on $X_{R(27)}$.

\subsection{Exceptional subgroups at $\ell = 7$}

Since there are no elliptic curves over $\qq$ with $7^2$-isogenies, all exceptions $7$-adically over $\qq$ come from exceptions mod $7$.  Thus $j = 2268945/128$ is the only $j$-invariant giving $7$-adic exceptions over $\qq$.  From Proposition \ref{vert57} we have that this $j$-invariant is also exceptional mod $49$. However, it is not exceptional mod $7^3$; by sampling Frobenius elements, one quickly sees that every element of $\rho_{E, 7^3}(G_\qq)$ need not have square discriminant.  For example $\#E(\ff_{53}) = 58$, so $a_{53} = -4$ and
\[\Delta(\rho(\Frob_{53})) \equiv -2^27^2 \pmod{7^3}, \]
which is not a square as $7 \equiv 3 \pmod{4}$.

\subsection{Exceptional subgroups at $\ell = 2$}

\begin{table}[ht]
\begin{tabular}{c | c c c c}
label & $n$ & RZB label & RZB cover& $\qq$-points on RZB cover \\ \hline
189621 & 5 & 353 & 353 &cuspidal \\
189995 & 5 & 314 & 158 & cuspidal + (j=287496 (CM by -16)) \\
190435 & 5 & 278 & 54 & cuspidal +   (j=1728)\\
190525 & 5 & 255 &54 & cuspidal + (j=1728)\\
890995& 6 & 667 &354 & cuspidal\\
891525& 6 & 627 & 168 & cuspidal\\
891526& 6 & 617 &159 &cuspidal\\
891735& 6 & 636 &168 &cuspidal\\
891737& 6 & 621 &159 &cuspidal \\
893009& 6 & 612 &52 &cuspidal \\
893011& 6 & 614 &51 &cuspidal +  (j=8000 (CM by -8)\\
893326& 6 & 603 &54  &cuspidal +  (j=1728)\\
894711& 6 & 541 &51 &cuspidal +   (j=8000 (CM by -8))\\
\end{tabular}
\caption{Rational points on 2-adic exceptional modular curves.}
\end{table}

Proposition \ref{2exceptions} gives all maximal exceptional subgroups of $\GL_2(\zz/2^n\zz)$ for $n \leq 6$ (which in particular covers all exceptions over $\qq$ since $X_0(32)$ contains no noncuspidal rational points).  In order for $G\subset \GL_2(\zz/2^n\zz)$ to occur over $\qq$, the determinant map $\det \colon G \to (\zz/2^n \zz)^\times$ must be surjective.  Table \ref{2data} gives labels from the recent work of Sutherland--Zywina \cite{sz} and Rouse--Zureick-Brown \cite{rzb} for such groups which can occur over $\qq$.  The relevant information from \cite{rzb} is contained in Table 1.  

As above we rule out the CM $j$-invariants not $1728$ by sampling discriminants of Frobenius elements.  For any $E/\qq$ with $j(E) = 1728$, and any rational prime $p$ which is inert in $\qq(i)$, the Frobenius element $\Frob_{p}$ is contained in the complement of the Cartan subgroup $\rho_{E, 2^5}(G_{\qq(i)})$.  As in $\S$\ref{cm}, such matrices have trace $0$, and hence the discriminant depends only upon the determinant.  Hence $\Delta(\rho_{E, 2^5}(\Frob_p))$ is a square modulo $2^5$ if and only if $-4\cdot p$ is a square modulo $2^5$, which is visibly independent of twists.  As $-11 \equiv 5 \pmod{8}$ is not a square, $\Frob_{11}$ witnesses the fact that $j=1728$ is not exceptional.

\appendix
\section{Maximal exceptional subgroups of $\GL_2(\zz/2^n\zz)$ for $n\leq 6$}\label{2data}

This table lists maximal exceptional subgroups of $\GL_2(\zz/2^n\zz)$ for $n \leq 6$.  The SZ label corresponds to the paper \protect\cite{sz} and the RZB label corresponds to the transpose group in the paper \protect\cite{rzb}.  As both papers pertain only to groups arising from curves over $\qq$, if the determinant is not surjective we do not give a label.

{\scriptsize
\begin{tabular}{c c c c c c c}
label & $n$& level & genus & generators &SZ & RZB \\ \hline
2147& 3 & 4 & 0 & $\sm{1}{1}{0}{1},\sm{3}{0}{0}{3},\sm{7}{4}{4}{3},\sm{5}{4}{4}{5}$ &  \multicolumn{2}{c}{(det not surjective)} \\[5pt]
2177& 3 & 4 & 0 & $\sm{1}{2}{0}{1},\sm{7}{4}{4}{3},\sm{3}{4}{4}{3},\sm{7}{6}{2}{3}$ &  \multicolumn{2}{c}{(det not surjective)} \\[5pt]
189551& 5 & 16 & 0 & $\sm{1}{1}{0}{1},\sm{3}{0}{0}{1},\sm{3}{0}{0}{3},\sm{1}{16}{16}{17},\sm{29}{16}{16}{13}$ &  \multicolumn{2}{c}{(det not surjective)} \\[5pt]
189605& 5 & 16 & 0 & $\sm{1}{1}{0}{1},\sm{9}{0}{0}{1},\sm{3}{0}{0}{3},\sm{1}{16}{16}{17},\sm{29}{16}{16}{13},\sm{3}{9}{16}{21}$ &  \multicolumn{2}{c}{(det not surjective)} \\[5pt]
189621& 5 & 32 & 1 & $\sm{1}{1}{0}{1},\sm{3}{0}{0}{3},\sm{21}{0}{0}{21},\sm{3}{0}{8}{9},\sm{15}{27}{8}{9}$ & 32A1-32b & 353\\[5pt]
189785& 5 & 8 & 0 & $\sm{1}{2}{0}{1},\sm{3}{0}{0}{1},\sm{3}{0}{0}{3},\sm{1}{24}{8}{25},\sm{13}{24}{24}{21}$ &  \multicolumn{2}{c}{(det not surjective)} \\[5pt]
189892& 5 & 8 & 0 & $\sm{1}{2}{0}{1},\sm{3}{1}{0}{1},\sm{3}{0}{0}{3},\sm{1}{24}{8}{25},\sm{13}{24}{24}{21}$ &  \multicolumn{2}{c}{(det not surjective)} \\[5pt]
189979& 5 & 8 & 0 & $\sm{1}{2}{0}{1},\sm{9}{0}{0}{1},\sm{3}{0}{0}{3},\sm{1}{24}{8}{25},\sm{13}{24}{24}{21},\sm{13}{30}{4}{3}$ &  \multicolumn{2}{c}{(det not surjective)} \\[5pt]
189981& 5 & 8 & 0 & $\sm{1}{2}{0}{1},\sm{9}{0}{0}{1},\sm{3}{0}{0}{3},\sm{1}{24}{8}{25},\sm{13}{24}{24}{21},\sm{3}{9}{16}{21}$ &  \multicolumn{2}{c}{(det not surjective)} \\[5pt]
189995& 5 & 16 & 1 & $\sm{1}{2}{0}{1},\sm{3}{0}{0}{3},\sm{29}{16}{16}{13},\sm{3}{0}{4}{9},\sm{7}{4}{4}{9}$ & 16E1-16b & 314\\[5pt]
190318& 5 & 8 & 0 & $\sm{1}{4}{0}{1},\sm{3}{0}{0}{1},\sm{3}{0}{0}{3},\sm{29}{28}{20}{21},\sm{19}{12}{4}{3}$ &  \multicolumn{2}{c}{(det not surjective)} \\[5pt]
190435& 5 & 8 & 1 & $\sm{3}{1}{0}{1},\sm{3}{0}{0}{3},\sm{1}{24}{8}{25},\sm{13}{24}{24}{21},\sm{3}{9}{16}{21}$ & 8F1-8k & 278\\[5pt]
190487& 5 & 8 & 0 & $\sm{1}{4}{0}{1},\sm{3}{0}{0}{3},\sm{1}{24}{8}{25},\sm{29}{28}{20}{21},\sm{19}{12}{4}{3},\sm{13}{30}{4}{3}$ &  \multicolumn{2}{c}{(det not surjective)} \\[5pt]
190525& 5 & 8 & 1 & $\sm{1}{4}{0}{1},\sm{3}{0}{0}{3},\sm{13}{24}{24}{21},\sm{9}{0}{2}{3},\sm{13}{12}{10}{3}$ & 8F1-8j & 255\\[5pt]
876594& 6 & 64 & 3 & $\sm{1}{1}{0}{1},\sm{9}{0}{0}{1},\sm{3}{0}{0}{3},\sm{5}{0}{0}{5},\sm{15}{27}{8}{9}$ &  \multicolumn{2}{c}{(det not surjective)} \\[5pt]
878116& 6 & 32 & 3 & $\sm{1}{2}{0}{1},\sm{9}{0}{0}{1},\sm{3}{0}{0}{3},\sm{21}{32}{32}{53},\sm{13}{30}{4}{3}$ &  \multicolumn{2}{c}{(det not surjective)} \\[5pt]
881772& 6 & 16 & 3 & $\sm{1}{4}{0}{1},\sm{9}{0}{0}{1},\sm{3}{0}{0}{3},\sm{29}{16}{48}{45},\sm{13}{12}{10}{3}$ &  \multicolumn{2}{c}{(det not surjective)} \\[5pt]
885865& 6 & 8 & 3 & $\sm{1}{8}{0}{1},\sm{3}{0}{0}{3},\sm{33}{56}{40}{57},\sm{0}{9}{1}{0},\sm{45}{24}{24}{21}$ &  \multicolumn{2}{c}{(det not surjective)} \\[5pt]
890995& 6 & 64 & 3 & $\sm{1}{1}{0}{1},\sm{3}{0}{0}{1},\sm{3}{0}{0}{3},\sm{5}{0}{0}{5},\sm{3}{9}{32}{15}$ &  & 667\\[5pt]
891525& 6 & 32 & 3 & $\sm{1}{2}{0}{1},\sm{3}{0}{0}{1},\sm{3}{0}{0}{3},\sm{21}{32}{32}{53},\sm{19}{30}{16}{21}$ &  & 627\\[5pt]
891526& 6 & 32 & 3 & $\sm{3}{0}{0}{1},\sm{3}{0}{0}{3},\sm{1}{32}{32}{33},\sm{21}{32}{32}{53},\sm{3}{9}{32}{15}$ &  & 617\\[5pt]
891735& 6 & 32 & 3 & $\sm{1}{2}{0}{1},\sm{3}{1}{0}{1},\sm{3}{0}{0}{3},\sm{21}{32}{32}{53},\sm{3}{9}{16}{21}$ &  & 636\\[5pt]
891737& 6 & 32 & 3 & $\sm{3}{1}{0}{1},\sm{3}{0}{0}{3},\sm{1}{32}{32}{33},\sm{21}{32}{32}{53},\sm{3}{9}{32}{15}$ &  & 621\\[5pt]
893009& 6 & 16 & 3 & $\sm{3}{0}{0}{1},\sm{3}{0}{0}{3},\sm{1}{48}{16}{49},\sm{29}{16}{48}{45},\sm{19}{30}{16}{21}$ &  & 612\\[5pt]
893011& 6 & 16 & 3 & $\sm{1}{4}{0}{1},\sm{3}{0}{0}{1},\sm{3}{0}{0}{3},\sm{29}{16}{48}{45},\sm{23}{36}{8}{9}$ &  & 614\\[5pt]
893326& 6 & 16 & 3 & $\sm{3}{1}{0}{1},\sm{3}{0}{0}{3},\sm{1}{48}{16}{49},\sm{29}{16}{48}{45},\sm{3}{9}{16}{21}$ &  & 603\\[5pt]
894711& 6 & 8 & 3 & $\sm{3}{0}{0}{1},\sm{3}{0}{0}{3},\sm{33}{56}{40}{57},\sm{45}{24}{24}{21},\sm{23}{24}{4}{3}$ &  & 541\\[5pt]
\end{tabular}
}

\subsection*{Acknowledgements}\label{ackref}
I would like to thank  Andrew Sutherland for useful conversations, suggestions, and data on subgroups of $\GL_2(\zz/\ell^n\zz)$ for small $\ell$ and $n$.  I would also like to thank  David Zureick-Brown and Jeremy Rouse for help with finding explicit equations for relevant modular curves.   
Thank you to Ariel Weiss and Amit Ophir for pointing out that the ``equivalently all'' characterization in Lemma 3.2 in the published version of this paper is incorrect when the isogeny class contains curves with full (projective) level \(\ell\) structure and for drawing my attention to the fact that the case \(K(\ell^n)\) contained in a split Cartan mod \(\ell\) required special attention.
In addition, many thanks to Alina Cojocaru, Noam Elkies, Dick Gross, Nathan Jones, Eric Larson, Bjorn Poonen, Barry Mazur, and Dmitry Vaintrob for comments and discussions.  Finally I would like to warmly thank the anonymous referee for a careful reading of the manuscript and many suggestions that have improved the paper.

\providecommand{\bysame}{\leavevmode\hbox to3em{\hrulefill}\thinspace}
\providecommand{\MR}{\relax\ifhmode\unskip\space\fi MR }
\providecommand{\MRhref}[2]{%
  \href{http://www.ams.org/mathscinet-getitem?mr=#1}{#2}
}
\providecommand{\href}[2]{#2}


\end{document}